\documentclass[reqno, oneside]{amsart}
\usepackage{Macros} 

\usepackage{esint}

\DeclareMathOperator{\essinf}{ess inf}

\usepackage{resmes}

\title{An Aronson-Bénilan / Li-Yau estimate in the JKO scheme in small dimension}
\author{Fanch Coudreuse}
\date{January 2026}

\begin{document}

\begin{abstract}
    \noindent
    We derive an Aronson-Bénilan / Li-Yau estimate in the JKO scheme associated to the porous-medium, heat, and fast-diffusion equations, in dimensions $1$ and $2$, and on simple domains (cubes, quarter-space, half-spaces, whole space, and the torus). Our method is based on a maximum principle for the determinant of the Hessian of Brenier potentials, iterated as a one-step improvement along the scheme. As a consequence, we obtain local $L^\infty$ bounds on the density, uniform in the time step, consistent with the continuous-time result. As a byproduct, we rigorously derive the optimality conditions in the fast-diffusion case, filling a gap in the literature.
\end{abstract}

\maketitle

\section{Introduction}

    This paper focuses on the JKO scheme approximation to the porous-medium, heat and fast-diffusion equations
\begin{equation*}
    \partial_t \rho = \Delta \rho^m
\end{equation*}
on a convex domain $\Omega$, or in the torus $\bb{T}^d$, in the space of probability measures with finite second moment, with Neumann boundary condition and given initial data. The different names for this equation refer to the different regimes for the values of $m$:
\begin{itemize}
    \item The case $m > 1$ corresponds to the porous-medium equation. It exhibits slow-diffusion-type behavior, and a free-boundary can appear if one starts from a compactly supported initial datum. We refer to the monograph \cite{porous-medium} by Vazquez for the general theory of this equation.
    \item The linear case $m=1$ corresponds to the classical heat equation, where strong smoothing effects occur, and solutions are instantaneously strictly positive.
    \item Finally, the case $0 < m < 1$ corresponds to the so-called fast-diffusion regime. Solutions are again strictly positive for positive time. In this setting, one can distinguish further regimes for the parameter $m$. In our analysis, two of them will be relevant: 
    \begin{itemize}
        \item The regime $m > m_c^1 := 1 - \frac{2}{d}$ to ensure well-posedness in the space of probability measures.
        \item The regime $m > m_c^2 := 1 - \frac{2}{d+2}$ corresponding to existence of solutions in the space of probability measures with finite second moment.
    \end{itemize}
    In both cases, one can check using Barenblatt profiles (corresponding to Dirac initial datum) that this threshold is sharp. We refer to the survey \cite{Fast_Diffusion_Survey} by Bonforte and Figalli for more results on this equation.  
\end{itemize}
Note that all these equations are particular instances of the general filtration equation $\partial_t \rho = \Delta \phi(\rho)$. 

Since the seminal work of Jordan, Kinderlehrer and Otto \cite{JKO}, it is now well understood that a large class of diffusion equations posed on the space of probability measures can be interpreted as the gradient flow on the Wasserstein space. While this can be made rigorous using the general theory of gradient flows in metric spaces developed by Ambrosio, Gigli and Savar\'e in \cite{AGS}, another classical approach to tackle this interpretation is to use the JKO (Jordan-Kinderlehrer-Otto) scheme, which can be seen as an implicit Euler scheme in the Wasserstein space. This scheme takes the following form: given a time step $\tau$ and a functional $\scr{E}$ over the space of probability measures with finite second moment, one constructs a sequence $(\rho_k^\tau)_{k \geq 0}$ by fixing $\rho_0 \in \cl{P}_2(\Omega)$ and iterating
\begin{equation*}
    \rho_k^\tau \in \rm{argmin}_{\eta \in \cl{P}_{2}(\Omega)} \scr{E}[\eta] + \frac{1}{2 \tau} W_2^2(\eta,\rho_k^\tau)
\end{equation*}
Then one expects that the curve obtained by interpolation of the values of the sequence converges to a weak solution to the equation 
\begin{equation*}
    \partial_t \rho = \nabla \cdot \left ( \rho \nabla \frac{\delta \scr{E}}{\delta \rho}[\rho] \right ) 
\end{equation*}
with $\rho_0$ as initial datum, and suitable Neumann boundary conditions, where $\frac{\delta \scr{E}}{\delta \rho}$ is the first variation of $\scr{E}$ with respect to linear perturbation of the measures. This convergence can be made rigorous using the general theory of \cite{AGS}, or by \cite[Chapter 8]{OT_Filippo}, provided that the scheme exists, the initial datum is of finite energy (i.e. $\scr{E}[\rho_0] < +\infty$), and the functional $\scr{E}$ admits $\lambda$-convexity with respect to Wasserstein geodesics.  

For our setting of interest, the functional $\scr{E}$ is taken to be of the form $\scr{E}_m[\rho] = \int_\Omega f_m(\rho) \dd{\cl{L}^d}$ for $\rho \ll \cl{L}^d$, where $f_m(z) = \frac{1}{m-1} z^m$ for $m \neq 1$, and $f_1(z) = z \log z$. This functional falls into the general theory of geodesically convex functionals provided that $m \geq m_c^{\rm{geo}} = 1 - \frac{1}{d}$, in which case $f_m$ satisfies the McCann conditions \cite{McCann_conditions} \cite[Section 7.3.2]{OT_Filippo}.

A natural question in the study of the JKO scheme is whether the qualitative and quantitative properties known for the continuous flow can be recovered at the discrete level, uniformly in the time step $\tau$. Such results are desirable for at least two reasons: they provide robustness of the scheme in recovering the behavior of the continuous in time equation, and they can be leveraged to improve convergence rates. Several properties of the continuous flow have been investigated in this direction: Lipschitz and continuity estimates \cite{Lee_Lipschitz} \cite{Lip_JKO_Filippo} \cite{Caillet_Continuity}, BV and Sobolev estimates with applications to $L^2_t H^2_x$-convergence \cite{BV_estimate_JKO} \cite{LpSoboJKO} \cite{Sobo_Keller_Segel_JKO} \cite{L2H2}, or comparison principle and $L^1$-contraction \cite{L1_contraction} \cite{Comparison_Maxime} \cite{Ultra_Fast_Diffusion_JKO}.  

One particularly desirable estimate is the Aronson-B\'enilan / Li-Yau estimate. It states that, for $m > m_c^1$, and for any positive solution $\rho$ to $\partial_t \rho = \Delta \rho^m$, the pressure variable \textemdash defined by $p = \frac{m}{m-1} \rho^{m-1}$ for $m \neq 1$, and $p = \log \rho$ for $m =1$ \textemdash, satisfies the sub-harmonic lower bound: 
\begin{equation}
    \Delta p \geq -\frac{\alpha_{d,m}}{t} \qquad \alpha_{d,m} = \frac{d}{d(m-1)+ 2}
\end{equation}
In the context of the heat equation, this inequality was proved by Li and Yau \cite{Li-Yau} \cite{Yau-Harnack} and bears their name. We note that in this context, on $\bb{R}^d$ or $\bb{T}^d$, Hamilton \cite{Hamilton_Matrix_Ineq} derived the stronger semi-convexity estimate $D^2 \log \rho \succeq -\frac{1}{2 t}$, called the Li-Yau-Hamilton matrix inequality. The remaining cases were tackled by Aronson and B\'enilan \cite{Aronson_AB_estimate} \cite{AB_estimate} (see \cite[Chapter 9]{porous-medium} for a review of this estimate in the context of the porous-medium equation). Subsequently, this estimate has been extended to more general frameworks: smooth manifolds \cite{AB_Estimate_Manifold}, filtration equations \cite{AB_Estimate_Filtrat}, with extension to $L^p$-version, eventually with a source term \cite{AB_Estimate_Lp}, or to the Keller-Segel model in \cite{AB_Estimate_Keller_Segel}. This estimate is fundamental, as it is the cornerstone for deriving $L^1$-$L^\infty$-regularization effects for the equation (see Lemma \ref{lemma: L1-Linfty_regularization}), and for studying the regularity of the solution \cite{AB_estimate} \cite{Continuity_PME_Caff_Friedman} and its free-boundary \cite{Reg_Free_Boundary_PME_Caff_Friedman}. 

The first study of such an inequality in the JKO scheme was performed by P.W. Lee \cite{Lee_Harnack} for the heat equation in the torus, where he proved that a version of the Li-Yau-Hamilton inequality holds at the level of the JKO scheme, at least for regular initial data. This was then extended by the author in \cite{Li-Yau-Hamilton_JKO}, still in the torus, for a more general class of equations of granular-medium type: $\partial_t \rho = \Delta \rho + \nabla \cdot (\rho [\nabla V + \nabla W * \rho])$, under no assumptions on the initial datum; this estimate was then used to derive $L^2_{t,loc} H^2_x$-strong convergence of the scheme. In this paper, we take a first step toward a proof of an Aronson-B\'enilan estimate in the JKO scheme by focusing on small dimension and simple domains. 

\subsection{Main Result}

    We consider an iteration of the JKO scheme starting from some measure $\rho_0 \in \cl{P}_2(\Omega)$:
    \begin{equation*}
        \rho_{k+1}^\tau \in \argmin_{\eta \in \cl{P}_{2,ac}(\Omega)} \scr{E}_m[\eta] + \frac{1}{2 \tau} W_2^2(\eta,\rho_k^\tau)
    \end{equation*}
    assuming $m > m_c^1$, and additionally $m > m_c^2$ if $\Omega$ is unbounded. Although the super-linear case $m \geq 1$ is well understood and can be found in most references to the topic \cite{JKO} \cite{Otto_PME} \cite{OT_Filippo}, the case $m < 1$, to the best of our knowledge, has not been fully treated in the literature, due to the non super-linear behavior of the function $f_m$ used to define the entropy. We therefore devote some time in Section 3 to fill this gap, proving existence, uniqueness, and deriving optimality conditions in bounded domains. 
    
    Our main result, namely the Aronson-B\'enilan / Li-Yau estimate in the JKO scheme, can then be stated. Defining the discrete pressure variable $p_k^\tau := \frac{m}{m-1} (\rho_k^\tau)^{m-1}$ for $m \neq 1$, and $p_k^\tau = \log \rho_k^\tau$, it takes the following form:

    \begin{theorem}[Aronson-B\'enilan in JKO Scheme]
        Suppose that $\Omega$ is either: the torus, a cube, a quarter space, a half-space, or the whole space, in dimension $d=1$ or $2$. Then for all $k \geq 1$, $u_k^\tau := \tau p_k^\tau + \frac{1}{2} |\cdot|^2$ is convex finite on $\Omega$, and there exists a universal sequence $(X_k)_{k \geq 1}$ valued in $[0,1]$, depending only on $m,d$, such that, in the Monge-Amp\`ere sense
        \begin{equation}
            \det(D^2 u_k^\tau)^\frac{1}{d} \geq 1 - X_k
        \end{equation}
        Furthermore, as $k \to +\infty$ we have
        \begin{equation}
            X_k \sim \frac{1}{d(m-1) + 2} \cdot \frac{1}{k}
        \end{equation}
    \end{theorem}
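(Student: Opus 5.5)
The plan is to work directly with the optimality conditions of the JKO step, as derived in Section 3. These give $\nabla p_{k+1}^\tau = (x - T_{k+1}(x))/\tau$, where $T_{k+1}$ is the optimal map from $\rho_{k+1}^\tau$ to $\rho_k^\tau$. Hence $u_{k+1}^\tau = \tau p_{k+1}^\tau + \frac12|\cdot|^2$ is, up to a constant, the Brenier potential of $T_{k+1}=\nabla u_{k+1}^\tau$, which gives convexity and finiteness on $\Omega$.

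The Monge--Amp\`ere equation reads $\rho_{k+1}^\tau = \rho_k^\tau(\nabla u_{k+1}^\tau)\det D^2 u_{k+1}^\tau$. Write $D_k := \det(D^2u_k^\tau)^{1/d}$ and $\rho=\rho_{k+1}^\tau$. The relation $\tau p = u - \frac12|x|^2$, with $p=\frac{m}{m-1}\rho^{m-1}$, links $\rho$ to $u$ itself. This yields a fully nonlinear elliptic equation for $u=u_{k+1}^\tau$:
\[
\Big(\tfrac{m-1}{m\tau}\big(u-\tfrac12|x|^2\big)\Big)^{\frac{1}{m-1}} = \rho_k^\tau(\nabla u)\,\det D^2 u .
\]

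The one-step improvement is a minimum principle for $D_{k+1}$. At an interior minimum point $x_0$ of $D_{k+1}$ (or, via Alexandrov/viscosity arguments, of a regularized version), I would differentiate the log of the equation twice. In the linearized operator $L=\mathrm{tr}((D^2u)^{-1}D^2\cdot)$ this gives:
\begin{itemize}
\item $L(\log\det D^2u) = L\log \rho - L\log \rho_k(\nabla u)$;
\item the term $\log\rho_k^\tau(\nabla u)$ transports the previous bound, because $\det D^2u_k(\nabla u) \ge (1-X_k)^d$ controls $D^2\log\rho_k$ only through $\Delta p_k$.
\end{itemize}
This is exactly where $d\le 2$ enters. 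In dimension $1$ and $2$ the determinant is essentially a concave/harmonic function of the Hessian, via $\det A = \frac12((\mathrm{tr}A)^2-|A|^2)$ for $d=2$. The lower bound on $\det D^2u_k$ can then be converted into the sub-solution information needed, with the third-derivative terms having a sign.

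The expected outcome is an algebraic recursion, obtained by evaluating the equation at $x_0$ and using $D^2u = I + \tau D^2 p$. At the minimum the relevant quantity $s=\tau\Delta p_{k+1}/d$ should satisfy something like
\[
1-X_{k+1} \ge \Phi_{m,d}(1-X_k),
\]
where $\Phi$ comes from combining $\rho_{k+1}\propto (u-\frac12|x|^2)^{1/(m-1)}$ with the arithmetic--geometric inequality. Solving the fixed-point-type recursion, e.g. $X_{k+1}=X_k/(1+\beta X_k)+O(X_k^2)$-type with $\beta=d(m-1)+2$, gives $X_k\sim \frac{1}{\beta k}$, matching the continuous rate $\alpha_{d,m}/t$ with $t=k\tau$. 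Initialization takes $X_0=1$, which is trivially true since $\det\ge0$ and no assumption is made on $\rho_0$. Monotonicity of $\Phi$ keeps $X_k\in[0,1]$.

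The main obstacle is making the maximum principle rigorous. The potentials are only weakly regular, $\rho_k^\tau$ may vanish (porous medium) or be unbounded, and boundary minima must be excluded. For the boundary I would use the symmetry of the domains listed: cubes, quarter/half-spaces and the torus allow reflection across faces, so the Neumann problem extends to a periodic or whole-space one and the minimum can be assumed interior. For regularity I would first mollify and bound $\rho_k^\tau$ above and below so that Caffarelli's theory gives smooth, uniformly convex $u$. I would then pass to the limit using stability of Monge--Amp\`ere measures under uniform convergence of convex functions, which is why the conclusion is stated in the Monge--Amp\`ere sense. In the whole space, $m>m_c^2$ is used for tightness/finite second moment in this approximation.
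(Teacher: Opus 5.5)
You set up the right computation: a minimum principle for $J=\det D^2u$, $u=u_{k+1}^\tau$, traced against $(D^2u)^{-1}$. However, the two steps that make it close are misidentified, so the one-step improvement, which is the whole content of the theorem, is never actually derived.

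With $p=\tau f_m'(\rho_{k+1}^\tau)$, $q=\tau f_m'(\rho_k^\tau)$ and $v=\frac12|x|^2+q$, the paper writes the equation as $p=J^{m-1}q(\nabla u)$ (or $p=\log J+q(\nabla u)$ if $m=1$). It differentiates twice at a minimum $x_0$ of $J$, where the term involving $D^2J$ is $\succeq0$ for every $m$, and traces against $(D^2u)^{-1}$. The third-order term does not ``have a sign''. It vanishes exactly, since $\operatorname{Tr}((D^2u)^{-1}D^2\partial_iu)=\partial_i\log J(x_0)=0$. Your logarithmic form also works for $m\neq1$, but only after checking that the $|\nabla p|^2$-type terms cancel, again because $\nabla J(x_0)=0$. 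The result is
\[
\operatorname{Tr}\big(D^2p\,(D^2u)^{-1}\big)\ \ge\ J^{m-1}\operatorname{Tr}\big(D^2u\,D^2q(\nabla u)\big).
\]

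\textbf{How the previous step enters.} It enters through the anisotropic trace $\operatorname{Tr}(D^2u\,D^2q(\nabla u))$, not through $\Delta p_k$. A lower bound on $\Delta q$ only gives something like $\lambda_{\min}(D^2u)\,\Delta v-\operatorname{Tr}D^2u$. Since $\lambda_{\min}(D^2u)$ is not controlled by $\det D^2u$, the induction would not close. What is needed is to write $D^2q=D^2v-I$ and apply the matrix AM--GM inequality $\frac1d\operatorname{Tr}(AB)\ge(\det A\det B)^{1/d}$, which uses the determinant bound on $v$ itself.

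\textbf{Where $d\le2$ enters.} It is not about concavity of the determinant: $\det^{1/d}$ is concave in every dimension. It enters through the term $-\frac1d\operatorname{Tr}(D^2u)^{-1}$ produced by $D^2p=D^2u-I$. In $d=2$ this equals $-\operatorname{Tr}D^2u/(d\det D^2u)$. For $\Lambda<1$ it then combines with $\frac{\Lambda^{d(m-1)}}{d}\operatorname{Tr}D^2u$ into a negatively weighted trace, which $\frac1d\operatorname{Tr}D^2u\ge\Lambda$ controls. Only this yields the recursion $F_{d,m}(1-\Lambda)\le1-\lambda$ with $F_{d,m}(X)=X(1-X)^{-(d(m-1)+2)}$, and hence $\beta=d(m-1)+2$. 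In your proposal the recursion is guessed from the target rather than derived.

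\textbf{Initialization.} Your initialization is wrong. With no assumption on $\rho_0$, $u_0$ need not be convex. Starting from $X_0=1$ would assert $\det(D^2u_1^\tau)^{1/d}\ge1-F_{d,m}^{-1}(1)>0$. This already fails for $m=1$, $\Omega=\bb{R}$, $\rho_0=\delta_0$: then $\rho_1^\tau\propto e^{-|x|^2/(2\tau)}$ and $u_1^\tau$ is constant. You must start at $X_1=1$, using only convexity of $u_1^\tau$; the asymptotics are unaffected. Also, the sign is $\tau\nabla p_{k+1}^\tau=T_{k+1}-x$, not $x-T_{k+1}$.

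\textbf{Boundary.} Your reflection idea is a genuinely different, arguably cleaner route than the paper's face/corner analysis, but only for the cube. You would need to check that the JKO step on $Q$ is the restriction of the JKO step on $\bb{R}^d/2\bb{Z}^d$ with evenly reflected data of mass $2^d$. This follows from uniqueness, reflection invariance, and the fact that folding the torus onto $Q$ is $1$-Lipschitz, so the transport costs agree. It would replace the cube boundary regularity, the ``corners fixed, faces preserved'' proposition and the special corner argument by the boundaryless torus theory.

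\textbf{Unbounded domains.} For half-spaces, quarter-spaces and $\bb{R}^d$, reflection leaves you on an unbounded domain. There $\rho$ has no positive lower bound and $\min J$ need not be attained. You still need, as in the paper:
\begin{enumerate}
\item approximation by cubes,
\item stability of the scheme,
\item local uniform convergence of the potentials,
\item stability of Monge--Amp\`ere measures.
\end{enumerate}
Regularize only $\rho_0$ and propagate bounds and regularity. Mollifying $\rho_k^\tau$ at each step would not obviously preserve the determinant bound.
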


    By "the Monge-Amp\`ere sense", we mean that the inequality should be interpreted as a lower bound on the Monge-Amp\`ere measure associated with $u_k^\tau$, see Definition \ref{def: Monge-Ampere_measure} for an introduction to this object. 

    Interestingly, this JKO version of the estimate is slightly stronger than what one would expect translating the classical estimate; indeed, by the AM-GM inequality, one has that, for $C^2$ functions, $\frac{1}{d} \Delta u \geq \det(D^2 u)^{1/d}$, hence formally the lower bound on the determinant can be translated into $\Delta p_k^\tau \geq -d \tau^{-1} X_k$ (this can be made rigorous using viscosity solutions, see Lemma \ref{lemma: AM_GM_Monge_Ampere}). Letting $\tau \to 0$, and $k \tau \simeq t$, we recover the Aronson-B\'enilan / Li-Yau estimate using the asymptotic of $(X_k)_{k \geq 1}$ (we refer to \ref{thm: main_result} for a precise statement). On the other hand, a linearization of the determinant as $\tau \to 0$ (expecting $p_k^\tau$ to converge to the pressure of the continuous equation), shows that this estimate is, asymptotically, not better than the Aronson-B\'enilan. 

    Combining this result with the AM-GM like inequality of Lemma \ref{lemma: AM_GM_Monge_Ampere}, and the $L^1$-$L^\infty$ regularization Lemma \ref{lemma: L1-Linfty_regularization}, we obtain the following immediate corollary:

    \begin{corollary}[Local uniform $L^\infty$-bounds on the JKO] \label{corollary: uniform_bounds_JKO}
        For $\bb{B}_r = B_r(x) \subset \Omega$ with $r$ small enough, there exists a constant $M = M(\Omega,t_0,\tau_0,m,r)$ such that for all $\tau \leq \tau_0$, $t \geq t_0$ one has
        \begin{equation*}
            ||\rho_k^\tau||_{L^\infty(\bb{B}_r)} \leq M
        \end{equation*}
    \end{corollary}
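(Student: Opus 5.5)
The statement to prove is the corollary: local uniform $L^\infty$ bounds on $\rho_k^\tau$ for $t = k\tau \geq t_0$ and $\tau \leq \tau_0$. The plan is to chain three ingredients. First, the main theorem gives a lower bound on the Monge--Amp\`ere measure of $u_k^\tau$. Second, Lemma \ref{lemma: AM_GM_Monge_Ampere} turns that into a lower bound on $\Delta p_k^\tau$. Third, the mean-value mechanism behind Lemma \ref{lemma: L1-Linfty_regularization} turns a subharmonic-type bound on the pressure, together with mass conservation $\|\rho_k^\tau\|_{L^1}=1$, into a local sup bound.

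\textbf{Step 1 (discrete Laplacian bound).} By the main theorem, $\det(D^2 u_k^\tau)^{1/d} \geq 1-X_k$ in the Monge--Amp\`ere sense. Here $u_k^\tau = \tau p_k^\tau + \tfrac12|\cdot|^2$ is convex and finite. Lemma \ref{lemma: AM_GM_Monge_Ampere} then gives $\tfrac1d \Delta u_k^\tau \geq 1-X_k$ in the viscosity (equivalently distributional, since $u_k^\tau$ is convex) sense. This is
\[
\Delta p_k^\tau \geq -\frac{d X_k}{\tau}.
\]
The asymptotics $X_k \sim \frac{1}{(d(m-1)+2)k}$ give a constant $C=C(m,d)$ with $X_k \leq C/k$ for all $k\geq1$: the asymptotic handles large $k$, and $X_k\in[0,1]$ handles the finitely many remaining ones. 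Since $k\tau \ge t_0$, we get $\Delta p_k^\tau \geq -dC/(k\tau) \geq -dC/t_0$, uniformly in $\tau\le\tau_0$.

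\textbf{Step 2 (regularization).} We apply Lemma \ref{lemma: L1-Linfty_regularization}. I expect it to have the form: if $p$ is the pressure of a probability density on $\Omega$ and $\Delta p \geq -A$ on a ball $\bb{B}_{2r}\subset\Omega$, then $\|\rho\|_{L^\infty(\bb{B}_r)}\leq M(A,r,m,d)$. The idea behind such a lemma is that $p + \frac{A}{2d}|x-x_0|^2$ is subharmonic. The sub-mean-value property on small balls then bounds $p(x_0)$ by an average of $p$ plus $Ar^2$. In the case $m>1$, $p\sim\rho^{m-1}$ is controlled in average by $\|\rho\|_{L^1}=1$ through Jensen or H\"older, since $\rho^{m-1}$ need not be integrable a priori otherwise. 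In the cases $m=1$ and $m<1$, $p$ is an increasing concave function of $\rho$, so Jensen gives $\fint p \leq \phi(\fint \rho)\leq\phi(|\bb{B}_r|^{-1})$, where $\phi$ is the map $\rho \mapsto p$. When $m<1$, $p<0$ is monotone in $\rho$, so an upper bound on $p$ yields an upper bound on $\rho$ as long as $p$ stays bounded away from $0$; the Jensen bound gives exactly this.

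\textbf{Step 3.} Plugging $A=dC/t_0$ and $r$ small enough that $\bb{B}_{2r}\subset\Omega$ gives $M=M(\Omega,t_0,\tau_0,m,r)$, independent of $k$ and $\tau$.

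The main obstacle is rigor in Step 2 when $\rho_k^\tau$ is only a measure density. For $m\le 1$ one needs $p$ to be locally bounded above for the sub-mean-value argument to apply; this follows because $u_k^\tau$ is convex and finite, hence locally Lipschitz, so $p_k^\tau$ is locally bounded. Convexity also makes the distributional Laplacian a measure and the mean-value inequality legitimate. On the torus or in bounded cubes one only needs $r$ below the injectivity or inradius scale, which is the meaning of "$r$ small enough" in the statement.
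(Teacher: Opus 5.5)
Your argument is correct and is the same chain the paper uses (it calls the corollary immediate): the main theorem's bound $\det(D^2u_k^\tau)^{1/d}\ge 1-X_k$, then Lemma \ref{lemma: AM_GM_Monge_Ampere} to get $\Delta p_k^\tau\ge -dX_k/\tau\ge -dC/t_0$, then Lemma \ref{lemma: L1-Linfty_regularization} with unit mass. Two small caveats on your side remarks: for $m<1$ the smallness of $r$ required by that lemma depends on $K\sim t_0^{-1}$ and not only on the geometry of $\Omega$ (harmless, since you can shrink the averaging balls inside $\bb{B}_{2r}$), and your Jensen heuristic for $m>1$ only works when $m\le 2$, which is why the paper defers that case to V\'azquez.
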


    This matches the classical $L^\infty$-regularization effects for the porous-medium, heat, and fast-diffusion equations in the regime $m > m_c^1$. 

    The restriction to small dimension and simple domains stems from our strategy of proof, based on a maximum principle argument: this strategy yields an algebraic matrix inequality involving the Hessian of $u_k^\tau$ at the maximum point. Only in dimension $1$ or $2$ can this inequality be used to derive a lower bound of the determinant of the Hessian. On the other hand, the simple domain assumption is there to be able to handle boundary maximum points. This is handled through an analysis of the behavior of the transport map on the boundary of a cube. An extension to a broader class of domains and to higher dimension would necessitate new ideas. 

\subsection{Structure of the paper}

    \begin{itemize}
        \item In Section $2$, we recall basic results in the theory of optimal transport, functionals over probability measures, and Monge-Amp\`ere measures that will be used in the proof.
        \item In Section $3$, we study the one-step JKO problem, in particular in the regime $m < 1$, and prove existence, uniqueness and optimality conditions for minimizers.
        \item In Section $4$, we show a one-step improvement of Monge-Amp\`ere lower bound on simple domains, under regularity of the initial datum.
        \item In Section $5$, we complete the proof of the Aronson-B\'enilan estimate. 
        \item Finally in Appendix $A$, we show how to obtain $L^1$-$L^\infty$ regularization effects on general domains under sub-harmonic assumptions. 
    \end{itemize}

\subsection{Acknowledgment}

    This work was supported by the European Union via the ERC AdG 101054420 EYAWKAJKOS. The author would like to thank Filippo Santambrogio for suggesting the problem, and for his valuable help in some technical parts of the proof. The author is also grateful to Ivan Gentil for valuable discussions and feedbacks during the preparation of this work. 
    
\section{Preliminaries} \label{section: preliminaries}

    We recall here some basic results in optimal transport, and functionals over the space of probability measures, and Monge-Amp\`ere measures. We refer to the monographs by Villani \cite{Old_New_Villani, Topic_Villani} or Santambrogio \cite{OT_Filippo} for further references. Throughout, $\Omega$ denotes a convex domain, that is a convex subset with non-empty interior, eventually unbounded, of $\bb{R}^d$, or the torus $\bb{T}^d$, and in both cases, $d(x,y)$ is the classical distance on $\Omega$ (Euclidean on subsets of $\bb{R}^d$, and quotient distance on $\bb{T}^d$). By a slight abuse of notations, we shall make no distinction between an absolutely continuous measure and its density with respect to Lebesgue. Similarly, we shall always confuse classes of functions (resp. measures) on the torus and the corresponding class of periodic functions (resp. $\bb{Z}^d$-translation invariant measures). 
    
\subsection{The Wasserstein distance}

    Let $\cl{P}_2(\Omega)$ be the set of all positive measures on $\Omega$ with finite mass and finite second moment $M_2[\mu] := \int_\Omega |x|^2 \dd{\mu}(x) < +\infty$.

    \begin{definition}[Wasserstein distance of order $2$] \label{def: Wasserstein_distance}
        Let $\mu,\nu \in \cl{P}_2(\Omega)$. A transport plan between $\mu$ and $\nu$ is a probability measure $\gamma$ on $\Omega \times \Omega$ with first and second marginals given by $\mu$ and $\nu$. The set of all transport plans between $\mu$ and $\nu$ will be denoted by $\Pi(\mu,\nu)$. The \emph{Wasserstein distance of order $2$} between $\mu$ and $\nu$ is defined as
        \begin{equation} \label{eq: Wasserstein_distance}
            W_2(\mu,\nu)^2 = \min_{\gamma \in \Pi(\mu,\nu)} \int_{\Omega \times \Omega} d(x,y)^2 \dd{\gamma}(x,y)
        \end{equation}
    \end{definition}

    The fact that this is a genuine minimum follows from the direct method in the calculus of variations, and any minimizer is called an optimal transport plan between $\mu$ and $\nu$. It is well-known that $W_2$ is a metric, which metrizes the narrow convergence together with convergence of the second moment, and we shall say that $\mu_n \to \mu$ in $\bb{W}_2(\Omega)$ when $W_2(\mu_n,\mu) \to 0$. 

    A fundamental result in the theory is the so-called Kantorovich duality. 

    \begin{theorem}[Kantorovich duality] \label{thm: Kantorovich_duality}
        Let $\mu,\nu \in \cl{P}_2(\Omega)$. Then one has
        \begin{equation} \label{eq: Kantorovich_duality}
            \frac{1}{2} W_2(\mu,\nu)^2 = \sup \left \{ \int_\Omega \psi \dd{\mu} + \int_\Omega \phi \dd{\nu} \: \middle | \: \psi(x) + \phi(y) \leq \frac{1}{2} d(x,y)^2  \right \} 
        \end{equation}
        Moreover, the supremum is attained at a pair (not necessarily unique) $(\psi,\phi)$ of $c$-conjugate functions, i.e. satisfying
        \begin{equation*}
            \psi(x) = \phi^c(x) = \inf_{y \in \Omega} \left \{ \frac{1}{2} d(x,y)^2 - \phi(y) \right \} \qquad \phi(y) = \psi^c(y) = \inf_{x \in \Omega} \left \{ \frac{1}{2} d(x,y)^2 - \psi(x) \right \}
        \end{equation*}
        Furthermore, if $\gamma$ is an optimal transport plan between $\mu$ and $\nu$, then the inequality $\psi(x) + \phi(y) \leq \frac{1}{2} d(x,y)^2$ is an equality $\gamma$-a.e. Such a pair is called a pair of Kantorovich potentials from $\mu$ to $\nu$.
    \end{theorem}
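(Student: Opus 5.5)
The plan is to prove the inequality ``$\ge$'' in \eqref{eq: Kantorovich_duality} (weak duality) directly, and then obtain equality together with attainment by building a pair of $c$-conjugate potentials from the support of an optimal plan. The last claim will then follow from the equality of the two values. Throughout, $c(x,y)=\frac12 d(x,y)^2$. This cost is continuous and nonnegative, and satisfies $c(x,y)\le |x|^2+|y|^2$ (on the torus $c$ is even bounded). Since $\mu,\nu\in\cl{P}_2(\Omega)$, every plan has finite cost.

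\emph{Weak duality.} Take $\psi,\phi$ with $\psi(x)+\phi(y)\le c(x,y)$ and $\gamma\in\Pi(\mu,\nu)$, with $\psi\in L^1(\mu)$ and $\phi\in L^1(\nu)$. Integrating the constraint against $\gamma$ and using the marginals gives $\int\psi\,\dd\mu+\int\phi\,\dd\nu\le\int c\,\dd\gamma$. Taking the sup on the left and the min on the right gives ``$\ge$''. The minimum in Definition~\ref{def: Wasserstein_distance} is attained by the direct method: $\Pi(\mu,\nu)$ is tight, hence narrowly compact by Prokhorov, and $\gamma\mapsto\int c\,\dd\gamma$ is narrowly l.s.c. because $c\ge0$ is continuous. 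Fix such an optimal plan $\gamma$.

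\emph{Construction of potentials.} First I show that $\mathrm{supp}\,\gamma$ is $c$-cyclically monotone: for $(x_i,y_i)_{i=1}^N$ in the support and any permutation $\sigma$,
\[
\sum_i c(x_i,y_i)\le\sum_i c(x_i,y_{\sigma(i)}).
\]
If this failed for some configuration, continuity of $c$ would give small neighbourhoods on which it still fails. Moving a small amount of $\gamma$-mass from the product of these neighbourhoods onto the permuted ones keeps the marginals and strictly lowers the cost, contradicting optimality. Then fix $(x_0,y_0)\in\mathrm{supp}\,\gamma$ and define the Rockafellar-type function
\[
\psi(x)=\inf\Bigl\{c(x,y_N)-c(x_N,y_N)+\sum_{i=0}^{N-1}\bigl(c(x_{i+1},y_i)-c(x_i,y_i)\bigr)\Bigr\},
\]
where the infimum runs over all $N$ and all chains $(x_i,y_i)$ in the support starting at $(x_0,y_0)$. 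Cyclical monotonicity gives $\psi(x_0)=0$, hence $\psi$ is not identically $-\infty$; as an infimum of functions of the form $c(\cdot,y)-\mathrm{const}$, it is $c$-concave. Set $\phi=\psi^c$ and replace $\psi$ by $\phi^c$, which leaves $\psi$ unchanged since it is already $c$-concave. By construction $\psi(x)+\phi(y)=c(x,y)$ for all $(x,y)\in\mathrm{supp}\,\gamma$, and the pair is $c$-conjugate.

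\emph{Integrability; main obstacle.} The delicate step is proving $\psi\in L^1(\mu)$ and $\phi\in L^1(\nu)$, so that the dual functional makes sense. The upper bounds are the easy direction. From $\psi(x)\le c(x,y_0)-\phi(y_0)$ we get $\psi(x)\le C(1+|x|^2)$, and symmetrically $\phi(y)\le C(1+|y|^2)$ from any point of the support where $\psi$ is finite. For the lower bound I would use that $\psi+\phi=c$ on the support, hence $\gamma$-a.e., which gives
\[
\int(\psi^-\circ\pi_1)\,\dd\gamma\le\int(\psi^+\circ\pi_1+\phi^+\circ\pi_2)\,\dd\gamma+\int c\,\dd\gamma<\infty,
\]
so $\psi^-\in L^1(\mu)$ (using that $\phi^-$ enters with the right sign after splitting positive and negative parts), and symmetrically for $\phi$. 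On the torus all of this is trivial, since $c$ is bounded and $c$-concave functions are bounded and Lipschitz. Once integrability is known, integrating $\psi\oplus\phi=c$ against $\gamma$ gives
\[
\int\psi\,\dd\mu+\int\phi\,\dd\nu=\int c\,\dd\gamma=\tfrac12 W_2^2(\mu,\nu).
\]
This proves the duality and its attainment.

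\emph{Equality on optimal plans.} Finally, let $\gamma'$ be any optimal plan. Then $\int(c-\psi\oplus\phi)\,\dd\gamma'=\frac12W_2^2-\frac12W_2^2=0$, and the integrand is nonnegative. Hence $\psi(x)+\phi(y)=c(x,y)$ holds $\gamma'$-a.e., which is the last assertion of the theorem.
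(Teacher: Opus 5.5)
Your proposal is correct. It is the standard textbook proof: an optimal plan by the direct method, $c$-cyclical monotonicity of its support, a Rockafellar-type $c$-concave potential, and integrability from $c(x,y)\le |x|^2+|y|^2$ together with $\psi(x)+\phi(y)=c(x,y)$ on $\mathrm{supp}\,\gamma$. The paper gives no proof of its own and only recalls the result from the Villani and Santambrogio references, where this same argument appears; note also that your final step uses only that $(\psi,\phi)$ attains the supremum, so it gives the $\gamma'$-a.e. equality for every maximizing pair, as the statement requires.
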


    The transformation $\psi \mapsto \psi^c$ is usually called the $c$-transform, and functions that are $c$-transform of another function are called $c$-concave functions. It is easy to check that if $\psi$ is $c$-concave, then $\psi = \psi^{cc}$. In the particular setting we are working with, $c$-concavity is equivalent to upper semi-continuity and $1$-semi-concavity. It is worth noticing that, in the case of the torus, by periodicity of the involved functions, one can rewrite the $c$-transform as $\psi^c(x) = \inf_{y \in \bb{R}^d} \{ \frac{1}{2} |x-y|^2 - \psi(y) \}$, which allows to treat both cases with the same definition. The functions $u := \frac{1}{2} |\cdot|^2 - \psi$ and $v := \frac{1}{2} |\cdot|^2 - \phi$ are usually called the Brenier potentials from $\rho$ to $\mu$. Those are convex functions, satisfying $u = v^*$ and $v = u^*$. 

\subsection{Brenier's theorem and Caffarelli's regularity}

    The existence of Kantorovich potentials is the first step in the proof of Brenier's theorem, stating that under absolute continuity assumptions on the densities, the optimal transport plan is in fact an optimal transport map. While this theorem was originally proved by Brenier \cite{Brenier_Theorem} in the Euclidean case, it was extended to Riemannian manifolds, including the torus, by McCann in \cite{Riemannian_Brenier_Theorem}. The case of the torus can ,in fact, be studied independently by identifying probability measures on $\bb{T}^d$ with periodic measures on $\bb{R}^d$. This has been done by Cordero - Erausquin in \cite{Periodic_OT} (in French, see section 1.3.2 of \cite{OT_Filippo} for an English version).

    \begin{theorem}[Brenier - Cordero - McCann] \label{thm: Brenier_thm}
        Let $\mu,\nu \in \cl{P}_2(\Omega)$, and $(\psi,\phi)$ be a pair of Kantorovich potentials from $\mu$ to $\nu$.
        \begin{enumerate}
            \item If $\mu \ll \cl{L}^d$, then $\psi$ is twice differentiable $\mu$-a.e. And, defining the ($\mu$-a.e. defined) map $T(x) := x - \nabla \psi(x)$, one has $T_\# \mu = \nu$, and $(\rm{id}, T)_\# \mu$ is the unique optimal transport plan from $\mu$ to $\nu$. We call $T$ the optimal transport map from $\mu$ to $\nu$.
            \item If we also have $\nu \ll \cl{L}^d$, and if $S$ is the optimal transport map from $\nu$ to $\mu$, then we have $T \circ S = \rm{id}$ (resp. $S \circ T = \rm{id}$) $\nu$-a.e. (resp. $\mu$-a.e.). Furthermore, the Monge-Amp\`ere equation holds $\mu$-a.e. 
            \begin{equation} \label{eq: Monge_Ampere}
                \nu(T(x)) \det(D T(x)) = \mu(x)
            \end{equation}
        \end{enumerate}
    \end{theorem}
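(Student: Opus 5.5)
The plan is to follow the classical route of Brenier and McCann. The Euclidean case and the torus are handled simultaneously by working with periodic objects on $\bb{R}^d$ and the formula $\psi^c(x) = \inf_{y\in\bb{R}^d}\{\frac12|x-y|^2-\psi(y)\}$ recalled after Theorem \ref{thm: Kantorovich_duality}.

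\emph{Step 1: regularity of the potential.} Since $\psi$ is $c$-concave, $u = \frac12|\cdot|^2-\psi$ is a supremum of affine functions, hence convex. It is therefore locally Lipschitz on the interior of $\Omega$ (on all of $\bb{R}^d$ in the periodic case). By Alexandrov's theorem, $u$ is twice differentiable Lebesgue-a.e., and so is $\psi$. Since $\mu\ll\cl{L}^d$ and $\partial\Omega$ is Lebesgue-negligible for convex $\Omega$, this holds $\mu$-a.e.

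\emph{Step 2: support of optimal plans.} Let $\gamma$ be any optimal plan. By Theorem \ref{thm: Kantorovich_duality}, $\psi(x)+\phi(y)=\frac12 d(x,y)^2$ on a set of full $\gamma$-measure, while the inequality $\le$ holds everywhere. Fix such a pair $(x,y)$ with $x$ a differentiability point of $\psi$. Then $x' \mapsto \frac12 d(x',y)^2-\psi(x')$ is minimized at $x'=x$.
\begin{itemize}
\item In $\bb{R}^d$, and in particular for interior points, the first-order condition gives $x-y=\nabla\psi(x)$.
\item On the torus, I would select a lift $\tilde y$ realizing the distance and get $x-\tilde y=\nabla\psi(x)$. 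This shows in passing that the minimizing lift is unique, so $y$ is determined modulo $\bb{Z}^d$.
\end{itemize}
Hence $\gamma$ is concentrated on the graph of $T=\rm{id}-\nabla\psi$, which means $\gamma=(\rm{id},T)_\#\mu$. Its second marginal is then $T_\#\mu=\nu$. Uniqueness follows because the same pair $(\psi,\phi)$ serves for every optimal plan, so every optimal plan equals $(\rm{id},T)_\#\mu$.

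\emph{Step 3: inverse maps.} If also $\nu\ll\cl{L}^d$, apply Step 2 symmetrically to obtain $S=\rm{id}-\nabla\phi=\nabla v$. The unique optimal plan is simultaneously $(\rm{id},T)_\#\mu$ and the pushforward of $(S,\rm{id})_\#\nu$ under the coordinate swap. Comparing these two descriptions gives $S\circ T=\rm{id}$ $\mu$-a.e. and $T\circ S=\rm{id}$ $\nu$-a.e. Equivalently, this is the relation $\nabla u^*\circ\nabla u=\rm{id}$ at points where both gradients exist.

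\emph{Step 4: the Monge--Amp\`ere equation.} This is the main obstacle, because $T=\nabla u$ is only a.e. differentiable in the Alexandrov sense and not Lipschitz. I would follow McCann's approach.
\begin{itemize}
\item Alexandrov's theorem shows that $\nabla u$ is approximately differentiable a.e., with approximate differential $DT=D^2u\succeq 0$.
\item Step 3 shows that $T$ is injective on a set $A$ of full $\mu$-measure.
\item The area formula for approximately differentiable maps, obtained via a Lusin-type decomposition of $A$ into countably many pieces on which $T$ is Lipschitz, gives for every nonnegative Borel $h$
\begin{equation*}
\int_A h(T(x))\,\nu(T(x))\det DT(x)\,\dd x=\int_{T(A)} h\,\nu\,\dd y=\int h\,\dd\nu=\int h(T)\,\dd\mu .
\end{equation*}
\end{itemize}
Since $h$ is arbitrary and $T$ is injective on $A$, this yields $\nu(T)\det DT=\mu$ $\mu$-a.e.

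One point must be checked separately: $\det DT>0$ $\mu$-a.e. The set $Z$ where $\det DT=0$ has image $T(Z)$ of Lebesgue measure zero, again by the area formula. Hence $\mu(Z)\le\nu(T(Z))=0$, using $T_\#\mu=\nu$, $\nu\ll\cl{L}^d$, and injectivity on $A$. This is what allows the identity to be stated pointwise $\mu$-a.e.
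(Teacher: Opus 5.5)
The paper gives no proof of this statement and only cites Brenier, McCann and Cordero-Erausquin. Your argument is the standard proof from those references and is correct: Alexandrov's theorem plus the first-order condition on the contact set for the map and uniqueness, symmetry of the unique optimal plan for $S\circ T=\mathrm{id}$, and McCann's area-formula argument on a set of injectivity for the Monge-Amp\`ere equation. Two cosmetic points: for unbounded $\Omega$, $u$ may be $+\infty$ on part of $\mathrm{int}(\Omega)$, so Step 1 should be run on $\mathrm{int}(\mathrm{dom}\,u)$, which carries full $\mu$-mass because the boundary of a convex set is Lebesgue-null; and your final check that $\det DT>0$ $\mu$-a.e.\ is redundant, since $\nu(T)\det DT=\mu$ already forces it wherever $\mu>0$.
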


    Notice that, in the periodic case, the map $T$ satisfies $T(x+n) = T(x) + n$ for all $n \in \bb{Z}^d$, and therefore defines a map from $\bb{T}^d$ to itself. 

    In general, the optimal potentials are only locally Lipschitz on the support of the measures, and their gradients are of locally bounded variation (using the semi-concavity assumption). In computations however, it is sometimes necessary to assume higher regularity of those functions. This type of regularity can be obtained using the celebrated regularity theory for the Monge-Amp\`ere equation developed by Caffarelli \cite{Caffarelli_Regularity} (we refer to the book \cite{Figalli_Mong_Amp} for an introduction to this deep subject), which roughly states that the transport map is one derivative more regular than the densities. Although originally developed in the Euclidean setting, this can be extended to the torus under the same assumptions as shown by Cordero-Erausquin in \cite{Periodic_OT} (see also \cite{Regu_Monge_Amp_Periodic}).

    Unfortunately, Caffarelli's original theory assumes strong regularity of the boundary on the domain, which the cube does not satisfy. On the other hand, using a reflection-type argument, it was proved by Jhaveri in dimension $2$ in \cite{Regu_Monge_Amp_Cube_Dim2}, and extended by Chen, Liu, and Wang in \cite{Regu_Monge_Amp_Cube_Gen} to other dimension, that one can still obtain some regularity in this case.

    \begin{theorem}[Caffarelli's regularity in torus and cubes] \label{thm: Caff_regu}
        Suppose $\Omega = \bb{T}^d$ or $\Omega = Q = [0,1]^d$. Let $(\psi,\phi)$ be a pair of Kantorovich potentials between two absolutely continuous densities $\mu,\nu$, and suppose that there exists $\epsilon > 0$ such that $\epsilon \leq \nu,\mu \leq \epsilon^{-1}$ a.e.. Then
        \begin{enumerate}
            \item There exists $\beta > 0$ depending only on $\epsilon$ such that $\psi,\phi \in C^{1,\beta}(\Omega)$. Furthermore, $\psi,\phi$ are uniformly $1$-concave.
            \item If for some $\alpha \in (0,1)$ we have $\mu,\nu \in C^{k,\alpha}(\Omega)$ with $k = 0,1$, then $\psi,\phi \in C^{k+2,\alpha}(\Omega)$, and the Monge-Amp\`ere equation holds in the classical sense.
        \end{enumerate}
    \end{theorem}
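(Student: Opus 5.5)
This theorem collects results from the literature. I would reduce every case to Caffarelli's interior theory for Brenier potentials whose target is all of $\bb{R}^d$, using periodic lifting for the torus and even reflection for the cube.

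\emph{Torus.} Lift $\mu,\nu$ to $\bb{Z}^d$-periodic measures on $\bb{R}^d$. The Kantorovich potential $\psi$ is periodic and $1$-semi-concave, so $u=\frac12|x|^2-\psi$ is a convex function on all of $\bb{R}^d$ with $\nabla u(x+n)=\nabla u(x)+n$. Hence $\partial u(\bb{R}^d)=\bb{R}^d$. By Theorem \ref{thm: Brenier_thm}, $\nabla u_\#\mu=\nu$, and the bounds $\epsilon\le\mu,\nu\le\epsilon^{-1}$ make $u$ an Alexandrov (in fact Brenier) solution of $\epsilon^2\le\det D^2u\le\epsilon^{-2}$ on $\bb{R}^d$ with convex (full-space) target. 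The key step is strict convexity of $u$. Caffarelli's lemma says that if the contact set $\{u=\ell\}$ with a supporting affine function $\ell$ contains more than one point, then it has no extreme points in the interior of the domain. Since the domain is all of $\bb{R}^d$, that set would have to be unbounded. This contradicts $u-\frac12|x|^2$ being periodic, hence bounded, because then $u$ grows quadratically. Caffarelli's interior $C^{1,\beta}$ estimate on compactly contained sections then gives $u\in C^{1,\beta}$, with $\beta$ depending only on $\epsilon$ and $d$.

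The same argument applies to $v=u^*$, since the roles of $\mu$ and $\nu$ are symmetric. Duality then gives the rest: $v\in C^{1,\beta}$ is equivalent to a uniform convexity modulus for $u$, namely $u(y)\ge u(x)+\nabla u(x)\cdot(y-x)+c|y-x|^{1+1/\beta}$. This is the claimed uniform $1$-concavity of $\psi$ in the weak sense. For (2), Caffarelli's interior $C^{2,\alpha}$ estimate for $\det D^2u=\mu/\nu(\nabla u)$ with $C^{0,\alpha}$ right-hand side gives $u\in C^{2,\alpha}$. The equation is then uniformly elliptic, so differentiating it and applying Schauder theory gives $u\in C^{3,\alpha}$ when the densities are $C^{1,\alpha}$. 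The Monge-Amp\`ere equation then holds classically, and periodicity makes all these local bounds global.

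\emph{Cube.} First I would show that the optimal map $T$ from $\mu$ to $\nu$ on $Q=[0,1]^d$ preserves each face: $T_i(x)=0$ when $x_i=0$ and $T_i(x)=1$ when $x_i=1$. This should follow from cyclical monotonicity of $\operatorname{spt}\gamma$ together with full support of $\nu$. Next, reflect $\mu$ and $\nu$ evenly across all faces to get measures on $[-1,1]^d$, and extend them $2\bb{Z}^d$-periodically. The reflected densities keep the bounds $\epsilon,\epsilon^{-1}$ and, under even reflection, stay $C^{0,\alpha}$. Define $\tilde T$ as the reflection of $T$. I would prove that $\tilde T$ is the optimal map for the reflected measures on the torus $(\bb{R}/2\bb{Z})^d$ by checking that $\tilde T$ is the gradient of the even reflection of $u$, and that this reflection is convex. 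Convexity is where face preservation enters: it says $\partial_i u=0$ on $\{x_i=0\}$ and $\partial_iu=1$ on $\{x_i=1\}$, so the reflected function is $C^1$ across each face and remains convex. The torus case then gives (1) and the $k=0$ part of (2) up to the boundary.

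\emph{Main obstacle.} I expect two difficulties. The first is making the face-preservation and convexity-of-reflection step rigorous when $u$ is only known to be Lipschitz a priori. I would first prove it for smooth positive densities using the Neumann-type condition $\nabla\psi\cdot n=0$, and then pass to the limit using stability of optimal maps. The second is the case $k=1$. Even reflection does not preserve $C^{1,\alpha}$ unless the normal derivatives of the densities vanish, so the reflection argument cannot simply be repeated at that order. For this I would instead linearize: differentiate the equation in the tangential directions, and for the normal direction use the oblique/Neumann boundary condition on the flat faces. Boundary Schauder estimates for the resulting linear equation on a flat boundary, as in Chen-Liu-Wang, should then give $C^{3,\alpha}$ up to the faces. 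Corners would be handled by iterating the reflection across each coordinate hyperplane separately.
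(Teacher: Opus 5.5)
The paper gives no proof of this theorem. It cites Caffarelli and Cordero-Erausquin for the torus, and the reflection argument of Jhaveri and Chen--Liu--Wang for the cube. Your torus part is correct and is essentially Cordero-Erausquin's argument: periodic lift, Brenier solutions are Alexandrov solutions because the target is all of $\bb{R}^d$, and strict convexity follows from quadratic growth.

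The gap is in the cube, at the step you make carry the reflection: proving face preservation of $T$ \emph{before} reflecting. Neither route you suggest closes it.
\begin{itemize}
\item Cyclical monotonicity plus full support of $\nu$ gives it in $d=1$. For $d\ge 2$, take a bad pair $x\in\{x_1=0\}$, $p\in\partial u(x)$ with $p_1>0$. Since $z_1\ge 0$ on $Q$ and $x_1=0$, one gets $u^*(p-be_1)=u^*(p)=x\cdot(p-be_1)-u(x)$. So the only consequence is $x\in\partial u^*(q)$ for $q$ on the segment $[p-p_1e_1,p]$. That is a Lebesgue-null set of targets, so $\nu\ge\epsilon$ gives no contradiction without some regularity input.
\item Proving it first for smooth densities through $\nabla\psi\cdot n=0$ is circular. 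The cube is neither smooth nor uniformly convex, so on $Q$ this Neumann condition is itself a consequence of regularity up to the boundary. That is exactly how the paper obtains $T(F)=F$ in Proposition \ref{prop: behavior_map_boundary}, \emph{from} this theorem.
\end{itemize}

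The step is also unnecessary, because convexity of the reflection does not need it.
\begin{itemize}
\item Work with the canonical extension $u(x)=\sup_{y\in Q}(x\cdot y-u^*(y))$, all of whose subgradients lie in $Q$.
\item Reflect $\psi$, not $u$, evenly across every face. Across $\{x_i=1\}$ the even reflection of $u$ is the wrong object.
\item The glued function $\tilde u=\frac12|x|^2-\tilde\psi$ is convex. The only gluing conditions are on one-sided derivatives: $\partial_i^+u\ge 0$ on $\{x_i=0\}$ and $\partial_i^-u\le 1$ on $\{x_i=1\}$. Both just say $\partial u\subset Q$.
\item Then $\nabla\tilde u$ pushes the reflected $\mu$ to the reflected $\nu$ and has the right periodicity on $(\bb{R}/2\bb{Z})^d$. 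So it is the optimal map there, and your torus argument applies.
\item Face preservation and the $C^1$ matching across faces follow \emph{a posteriori}: $\tilde\psi$ is even across each face and differentiable, so $\partial_i\psi=0$ there.
\end{itemize}
Equivalently, you can reflect the densities, take the unique periodic optimal map, use uniqueness to show it commutes with the reflections, and restrict.

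After this repair your $k=1$ sketch is workable. For $w=\partial_j\tilde u$, the offending term $\partial_i(\log\tilde\nu)(\nabla\tilde u)\,\tilde u_{ij}$ with $i\neq j$ is still $C^{0,\alpha}$ across $\{x_i=0\}$. The reason is that its jump factor multiplies $\tilde u_{ij}$, which is odd in $x_i$ and vanishes there. On $\{x_j=0\}$, $w$ is odd, so it has flat Dirichlet data there and a boundary Schauder estimate applies.
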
 

    In the torus, one can remove the constraints on $k$, that is, if $\mu,\nu$ are $C^{k,\alpha}(\bb{T}^d)$ for some $k \geq 0$, then the Kantorovich potentials are of class $C^{k+2,\alpha}(\bb{T}^d)$. On the other hand, in the cube, the $C^{3,\alpha}(Q)$ regularity for is sharp, as shown by a counterexample for higher regularity constructed by Jhaveri \cite{Regu_Monge_Amp_Cube_Dim2}. It is worth mentioning that the subject of finding optimal regularity for the transport map in rough domains is a vast topic that has received considerable attention in recent years.

\subsection{Entropy functional} \label{subsection: entropy_functional}

    We introduce the following family of convex functions for $m > 0$.
    \begin{equation*}
        f_m(t) := 
        \begin{cases}
            \frac{1}{m-1} t^m & m \neq 1 \\
            t \log t & m = 1
        \end{cases}
    \end{equation*}
    whose Legendre transform is given by 
    \begin{equation*}
        f_m^*(s) =
        \begin{cases}
            c_m \, [s]_+^{\frac{m}{m-1}}  & \text{if } m > 1 \\[6pt]
            e^{s-1}                        & \text{if } m = 1 \\[6pt]
            \begin{dcases}
                c_m \, (-s)^{\frac{m}{m-1}} & s < 0 \\
                +\infty                      & s \geq 0
            \end{dcases}
                                           & \text{if } m < 1
        \end{cases}
    \end{equation*}
    where $c_m := |m-1|^\frac{1}{m-1}[m^\frac{1}{1-m} + m^\frac{m}{1-m}] > 0$ for $m \neq 1$, and $[s]_+ = \max(s,0)$ is the positive part of $s$. 
    
    For a probability measure $\rho \in \cl{P}_2(\Omega)$, we write the Lebesgue decomposition of $\rho$ with respect to the Lebesgue measure on $\Omega$ as $\rho = \rho^{ac} \cdot \cl{L}^d + \rho^\perp$. 
    
    \begin{definition}[$m$-entropy] \label{def: m_entropy}
        Let $\rho \in \cl{P}_2(\Omega)$. The $m$-entropy of $\rho$ is defined as follows
        \begin{align}
            \mbox{for $m \geq 1$ by } & \scr{E}_m[\rho] = 
            \begin{cases}
                \int_\Omega f_m(\rho^{ac}) \dd{\cl{L}^d} & \mbox{if $\rho^\perp = 0$} \\
                +\infty & \mbox{else}
            \end{cases} \\
            \mbox{for $m < 1$ by } & \scr{E}_m[\rho] = \int_\Omega f_m(\rho^{ac}) \dd{\cl{L}^d}
        \end{align}
    \end{definition}

    The reason for the apparent asymmetry in the definition lies in the different behavior of $f_m$ at $+\infty$. Indeed, for $m \geq 1$, $f_m$ is super-linear (i.e. $t^{-1} f_m(t) \to +\infty$). Whereas for $m < 1$, we have $\lim_{t \to +\infty} t^{-1} f_m(t) = 0$. By standard considerations in the theory of local functionals (\cite[Section 7]{OT_Filippo}), in order to ensure some lower semi-continuity property, one needs to take into account the singular part. 

    \begin{proposition}[Lower semi-continuity of entropy] \label{prop: m_entropy_lsc}
        Suppose that $\rho_n \to \rho$ narrowly in $\cl{P}_2(\Omega)$. Then we have $\scr{E}_m[\rho] \leq \liminf_n \scr{E}_m[\rho_n]$ provided that one of the following conditions holds
        \begin{enumerate}
            \item $\Omega$ is bounded.
            \item $m \geq 1$.
            \item $\Omega$ is unbounded, $m > m_c^2$ and the sequence has uniformly bounded second moment, i.e. $\sup_n M_2[\rho_n] < +\infty$. 
        \end{enumerate}
    \end{proposition}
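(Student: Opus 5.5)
The plan is to prove lower semi-continuity through the dual (Legendre) representation of the local functional, treating the singular part separately according to the regime of $m$. For $m \geq 1$, $f_m$ is convex, l.s.c. and superlinear. The standard representation
\[
\scr{E}_m[\rho] = \sup\Big\{ \int_\Omega \varphi \dd{\rho} - \int_\Omega f_m^*(\varphi)\dd{\cl{L}^d} \Big\}
\]
then holds, with the supremum taken over $\varphi \in C_c(\Omega)$ such that $f_m^*(\varphi)$ is integrable. It holds with the convention that the value is $+\infty$ when $\rho^\perp \neq 0$: superlinearity gives $f_m^\infty = +\infty$, so $\varphi$ can be taken very large on the singular support. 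Each functional $\rho \mapsto \int \varphi \dd{\rho} - \int f_m^*(\varphi)$ is narrowly continuous, so the supremum is l.s.c. This is the content of \cite[Section 7]{OT_Filippo}.

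For $m>1$, $f_m \geq 0$ and no integrability issue arises, which settles case (2) on any $\Omega$. For $m=1$ on unbounded $\Omega$, $f_1$ is bounded below only by $-1/e$. Here I would add the usual correction by a Gaussian moment: write $\int \rho\log\rho = \int \rho \log(\rho/g) - \int \rho \log g$ with $g = c e^{-|x|^2}$. The first term is a relative entropy, which is l.s.c. under narrow convergence. The second term equals $c' + \int |x|^2 \dd{\rho}$, and this is narrowly l.s.c. If uniform second moment bounds are not available, I would use instead $g = c e^{-|x|}$ together with l.s.c. of the first moment under narrow convergence.

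For $m<1$, $f_m$ is concave-negative ($f_m = -\frac{1}{1-m}t^m \le 0$) with recession slope $0$. The definition of $\scr{E}_m$, which ignores $\rho^\perp$, is then exactly the relaxed functional $\int f_m(\rho^{ac}) + f_m^\infty \rho^\perp(\Omega)$ with $f_m^\infty = 0$. On bounded $\Omega$ (case 1), the same sup-representation with $\varphi \in C_b(\Omega)$, $\varphi<0$, gives l.s.c. (Reshetnyak-type argument, \cite[Prop.~7.7]{OT_Filippo}), since $f_m^*(\varphi)$ is integrable on a set of finite measure.

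The main obstacle is case (3) with $m<1$ on unbounded $\Omega$. Mass escaping to infinity could drive $\int -\rho_n^m$ to $-\infty$, and the problem is precisely this tail, not the singular part. I would truncate: for $R>0$,
\[
\scr{E}_m[\rho_n] = \int_{B_R} f_m(\rho_n) + \int_{B_R^c} f_m(\rho_n).
\]
The first term is handled by the bounded case applied on $B_R\cap\Omega$. Here I would use a smooth cutoff rather than a sharp ball to avoid boundary-mass issues; alternatively, l.s.c. on open sets via the sup-representation with $\varphi$ supported in $B_R$. For the tail, Hölder with weight $(1+|x|^2)$ gives
\[
\int_{B_R^c} \rho_n^m \leq \Big(\int (1+|x|^2)\rho_n\Big)^m \Big(\int_{B_R^c} (1+|x|^2)^{-\frac{m}{1-m}}\Big)^{1-m}.
\]
The last integral is finite and tends to $0$ as $R\to\infty$ exactly when $\frac{2m}{1-m} > d$, i.e. $m > \frac{d}{d+2} = m_c^2$. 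Combined with $\sup_n M_2[\rho_n]<\infty$, this makes the tail uniformly small. I then take $\liminf_n$ and let $R\to\infty$, using monotone convergence for the limit $\rho$, whose second moment is finite by l.s.c. of $M_2$. This concludes the proof.
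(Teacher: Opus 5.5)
\textbf{The gap is in your $m=1$ step.} This concerns $m=1$ on unbounded $\Omega$, which you need both for item (2) and for item (3) with $m=1$; your H\"older tail estimate only covers $m<1$. Your decomposition has the wrong sign. Since $\log\rho=\log(\rho/g)+\log g$, one has
\[
\int \rho\log\rho\,\mathrm{d}x \;=\; H(\rho\,|\,g)+\int\rho\log g\,\mathrm{d}x \;=\; H(\rho\,|\,g)+\log c - M_2[\rho], \qquad g=c\,e^{-|x|^2},
\]
so the moment enters with a minus sign. The map $\rho\mapsto -M_2[\rho]$ is narrowly \emph{upper} semicontinuous, so lower semicontinuity of $H(\cdot\,|\,g)$ gives you nothing. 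The same objection applies to your fallback with $g=c\,e^{-|x|}$: lower semicontinuity of the first moment again points the wrong way.

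In fact, no argument can work there without moment control. Take $\rho_n=(1-\frac1n)\rho+\frac1n\eta_n$ with $\eta_n$ uniform on $B_{R_n}$ and $R_n=e^{n^2}$. Then $\rho_n\to\rho$ narrowly and $\rho_n\in\cl{P}_2(\bb{R}^d)$. Convexity of $t\log t$ gives $\scr{E}_1[\rho_n]\le(1-\frac1n)\scr{E}_1[\rho]-\frac1n\log|B_{R_n}|\to-\infty$. So item (2) holds unconditionally only for $m>1$ (or for $m=1$ on bounded $\Omega$). The paper's own argument for $m=1$ on unbounded domains indeed uses $\sup_n M_2[\rho_n]<+\infty$.

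\textbf{How to repair it.} With the correct sign and $g\propto e^{-|x|}$, what you need is \emph{convergence} $\int|x|\,\mathrm{d}\rho_n\to\int|x|\,\mathrm{d}\rho$. This does follow from narrow convergence plus a uniform second-moment bound, by uniform integrability: $\int_{|x|>R}|x|\,\mathrm{d}\rho_n\le M_2[\rho_n]/R$. This is exactly the paper's proof. It writes
\[
\scr{E}_m=-\int b\,\mathrm{d}\rho-\int f_m^*(-b)+\int[f_m(\rho)+b\rho+f_m^*(-b)].
\]
The last integrand is nonnegative by Fenchel--Young; for $b=|x|$ it is the relative-entropy integrand with respect to $e^{-|x|-1}$. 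The paper then uses the sub-quadratic growth of $b$ to pass to the limit in $\int b\,\mathrm{d}\rho_n$. Alternatively, you can run your own truncation argument for $m=1$ too. Bound the tail from below by $t\log t\ge -|x|t-e^{-|x|-1}$, which gives $\int_{B_R^c}\rho_n\log\rho_n\ge -M_2[\rho_n]/R-\int_{B_R^c}e^{-|x|-1}$.

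\textbf{Your $m<1$ argument is correct and genuinely different.} The paper needs a strictly sub-quadratic weight $b=1+|x|^q$ with $(1-m)d/m<q<2$, so that $\int b\,\mathrm{d}\rho_n$ converges. You only need uniform smallness of the tail, obtained from H\"older with the quadratic weight. This uses the moment bound more directly, and both routes rest on the same integrability threshold $m>m_c^2$.

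\textbf{One small caution.} Your alternative ``sup-representation with $\varphi$ supported in $B_R$'' fails for $m<1$, because $f_m^*=+\infty$ on $[0,\infty)$. Keep the smooth cutoff $\chi$ and test with $\chi\varphi$, $\sup\varphi<0$.
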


    \begin{proof}
        For $\Omega$ bounded this follows from \cite[Proposition 7.7]{OT_Filippo}, and the case $m > 1$ follows by positivity of $f_m$ and \cite[Remark 7.8]{OT_Filippo}. 
        
        For the unbounded case, this follows from a simple adaptation of the argument of \cite[Proposition 2.1]{Filippo_Moment_Mes} or \cite[Section 2]{q_moment_mes}. We first need to find a continuous function $b$ such that $(1+|x|)^{-2} |b(x)| \to 0$ as $+\infty$, and such that $f^*_m(-b)$ is integrable. Then, using Jensen's inequality, we have $f_m(t) + f^*_m(-b) + t b \geq 0$, and we deduce that
        \begin{align*}
            \scr{E}_m[\rho] &= -\int_\Omega b \dd{\rho} - \int_\Omega f^*_m(-b) \dd{\cl{L}^d} + \int_\Omega f_m(\rho^{ac}) \dd{\cl{L}^d} + \int_\Omega b \dd{\rho} + \int_\Omega f^*(-b) \dd{\cl{L}^d} \\
            &= -\int_\Omega b \dd{\rho} - \int_\Omega f^*_m(-b) \dd{\cl{L}^d} + \sup_{K \Subset \Omega} \scr{E}_m[\rho \resmes K] + \int_K b \dd{\rho} + \int_K f^*(-b) \dd{\cl{L}^d}
        \end{align*}
        where we used the positivity given by Jensen's inequality to transform the three last terms into the supremum. 
        
        By the previous result on bounded domain, and continuity of $b$, each functionals in the supremum are l.s.c. for the narrow convergence, and the supremum is therefore itself l.s.c. Furthermore, if a sequence converges narrowly and admits uniformly bounded second moment, then the integral of this sequence against $b$ converges as $b$ admits sub-quadratic growth. 
        
        Thus, if one can find such a $b$, then the l.s.c. result follows.
        \begin{enumerate}
            \item For $m = 1$, we can simply take $b(x) = |x|$.
            \item For $0 < m < 1$, we want to take $b(x) = 1 + |x|^q$ for some $0 \leq q < 2$. Then $f_m^*(-b) = c_m (1+|x|^q)^\frac{m}{m-1}$ and the integrability hypothesis is true provided that $\frac{qm}{m-1} < -d$, i.e. $q > \frac{(1-m) d}{m}$. There exists such a $q$ if and only if $2 > \frac{(1-m) d}{m}$, i.e. $m > m_c^2$. \qedhere
        \end{enumerate}
    \end{proof}

    Finally, we will need the following moment estimate for the entropy, which will be used to derive coercivity in the one-step JKO scheme problem. 

    \begin{proposition}[Lower bound on entropy] \label{prop: entropy_lower_bound}
        Suppose $m > m_c^2$. Then there exists a constant $C(m,d) \in \bb{R}$ such that for all $\rho \in \cl{P}_2(\bb{R}^d)$ we have
        \begin{equation*}
            \scr{E}_m[\rho] \geq 
            \begin{cases}
                C(m,d) M_2[\rho]^{\frac{d}{2}(1-m)} & m \neq 1 \\
                C(1,d) - \frac{d}{2} \log M_2[\rho] & m = 1
            \end{cases}
        \end{equation*}
        Moreover $C(m,d)$ can be obtained by solving the variational problem
        \begin{equation*}
            C(m,d) = \inf_{\rho \in \cl{P}_2(\bb{R}^d), M_2[\rho] = 1} \scr{E}_m[\rho]
        \end{equation*}
    \end{proposition}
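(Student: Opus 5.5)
The plan is a scaling argument that reduces the statement to the normalized problem. Fix $\rho \in \cl{P}_2(\bb{R}^d)$ with $M_2[\rho] = \sigma^2 > 0$, and set $\tilde\rho(x) := \sigma^d \rho(\sigma x)$, i.e. $\tilde\rho = (x \mapsto x/\sigma)_\# \rho$. Then $M_2[\tilde\rho] = 1$. A change of variables $y = \sigma x$ gives, for the absolutely continuous part,
\begin{equation*}
    \int f_m(\tilde\rho^{ac}) \dd{x} = \int f_m(\sigma^d \rho^{ac}(y)) \sigma^{-d} \dd{y}.
\end{equation*}
For $m \neq 1$ this equals $\sigma^{d(m-1)} \scr{E}_m[\rho]$. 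For $m = 1$ it equals $\scr{E}_1[\rho] + d \log \sigma$. In the case $m \geq 1$ the singular part is preserved under dilation, so finiteness is unaffected. Hence $\scr{E}_m[\rho] = \sigma^{d(1-m)} \scr{E}_m[\tilde\rho] \geq C(m,d) M_2[\rho]^{\frac d2 (1-m)}$ when $m \neq 1$, and $\scr{E}_1[\rho] = \scr{E}_1[\tilde\rho] - \frac d2 \log M_2[\rho] \geq C(1,d) - \frac d2 \log M_2[\rho]$. This also shows that the optimal constant is exactly the stated infimum. The case $M_2 = 0$ (a Dirac mass) is trivial or degenerate: for $m<1$ the entropy is $0$ and the right-hand side is $0$, and for $m \geq 1$ the entropy is $+\infty$.

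The real content is that $C(m,d) > -\infty$, i.e. the entropy is bounded below on $\{M_2 = 1\}$.

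For $m > 1$ we have $f_m \geq 0$, so $C \geq 0$.

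For $m = 1$ and $m < 1$ we reuse the device from the proof of Proposition \ref{prop: m_entropy_lsc}. Young's inequality gives $f_m(t) \geq -t b(x) - f_m^*(-b(x))$, so
\begin{equation*}
    \scr{E}_m[\rho] \geq -\int b \dd{\rho^{ac}} - \int f_m^*(-b) \dd{x}.
\end{equation*}
For $m=1$, take $b(x) = |x|^2$. Then $f_1^*(-b) = e^{-|x|^2-1}$ is integrable, and $\int b \dd{\rho} \leq M_2 = 1$, so the bound is finite.

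For $m_c^2 < m < 1$, take $b(x) = 1 + |x|^q$ with $\frac{(1-m)d}{m} < q < 2$; such a $q$ exists exactly because $m > m_c^2$. Then $f_m^*(-b) = c_m (1+|x|^q)^{\frac{m}{m-1}}$ is integrable. Since $q<2$ and $\rho$ is a probability measure, $\int b \dd{\rho} \leq 2 + M_2 \leq 3$. So $\scr{E}_m \geq -3 - \|f_m^*(-b)\|_{L^1} > -\infty$ on the constraint set. (In this regime $f_m \leq 0$, so the bound is negative.)

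The only point needing care, which I expect to be the minor obstacle, is checking the sign conventions of $f_m^*$ for $m<1$: the Legendre transform is finite only at negative arguments, which is why we evaluate it at $-b$ with $b > 0$. Since $\rho^\perp$ is ignored for $m<1$, we use $\int b \dd{\rho^{ac}} \leq \int b \dd{\rho}$, valid because $b \geq 0$.
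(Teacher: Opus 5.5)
Your proposal is correct and follows the paper's argument: you rescale to reduce to the constraint $M_2 = 1$, and you get finiteness of the infimum from $f_m \geq 0$ for $m>1$ and from the Fenchel--Young bound $f_m(t) \geq -tb - f_m^*(-b)$ for $m \leq 1$. The only cosmetic difference is the weight: the paper takes $b = 1+|x|^2$ in both cases $m \leq 1$, and for $m<1$ this already makes $f_m^*(-b)$ integrable exactly when $m > m_c^2$, so your restriction to $q<2$ is harmless but unnecessary.
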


    \begin{proof}
        We consider the following optimization problem for $m > m_c^2$:
        \begin{equation*}
            E_m(M) = \inf_{\rho \in \cl{P}_2(\bb{R}^d), M_2[\rho] = M} \scr{E}_m[\rho]
        \end{equation*}
        which is finite, as $\scr{E}_m[\rho] \geq 0$ for $m > 1$, and using Fenchel's inequality $f_m(t) \geq -(1+|x|^2) t + f_m^*(-(1+|x|^2))$ for $m \leq 1$. By approximation, we can restrict the optimization to $\rho \ll \cl{L}^d$. If $\rho$ is admissible, then $\eta = M^\frac{d}{2} \rho(\sqrt{M} \cdot)$ satisfies $\eta \in \cl{P}_2(\bb{R}^d)$, $M_2[\eta] = 1$, $\scr{E}_m[\eta] = M^{\frac{d}{2}(m-1)} \scr{E}_m[\rho]$ for $m \neq 1$, and $\scr{E}_1[\eta] = \scr{E}_1[\rho] + \frac{d}{2} \log M$ for $m = 1$. This rescaling implies that
        \begin{equation*}
            E_m(M) = 
            \begin{cases}
                E_m(1) M^{\frac{d}{2}(1-m)} & m \neq 1 \\
                E_m(1) - \frac{d}{2} \log M & m = 1
            \end{cases}
        \end{equation*}
        and we conclude using $\scr{E}_m[\rho] \geq E_m(M_2[\rho])$ for any $\rho \in \cl{P}_2(\bb{R}^d)$. 
    \end{proof}

    \begin{remark}
        One can prove that optimizers for $E_m(M)$ exist and are Gaussian for $m = 1$, and of Barenblatt form for $m \neq 1$, i.e. of the form $(A - B |x|^2)_+^\frac{1}{m-1}$ for $m > 1$ and $(A+B |x|^2)^\frac{1}{m-1}$ for $m < 1$ with $A,B > 0$. Furthermore, one can then express $E_m(1)$ in terms of Gamma-type functions (for example, $E_1(1) = \frac{d}{2} \log(2 \pi d)$, attained for the standard Gaussian).   
    \end{remark}

\subsection{Monge-Amp\`ere measure}

    The Monge-Amp\`ere measure is a weak extension of the non-linear second order quantity $\det(D^2 u)$, to an arbitrary convex function $u$. It allows one to define such a notion even if the function $u$ is not regular, and is the cornerstone of the regularity theory of the Monge-Amp\`ere equation. A detailed introduction can be found in \cite{Figalli_Mong_Amp} or \cite{Monge_Ampere_Nam_Qe_Le}. 

    \begin{definition}[Monge-Amp\`ere measure] \label{def: Monge-Ampere_measure}
        Let $\Omega$ be a convex domain of $\bb{R}^d$, $u : \Omega \to \bb{R}$ a convex function. The Monge-Amp\`ere measure of $u$ is the measure on $\rm{int}(\Omega)$ defined by
        \begin{equation*}
            \mu_u(E) := |\partial u[E]| \qquad \partial u[E] = \bigcup_{x \in E} \partial u(x)
        \end{equation*}
    \end{definition}

    The fact that this defines a genuine Borel measure is a non-trivial fact in the theory, see \cite[Theorem 2.3]{Figalli_Mong_Amp} for a proof. If $u$ is of class $C^2$ (or even merely $C^{1,1}$), then this measure coincides with $\det(D^2 u) \cdot \cl{L}^d$. As is usual with weak notions, this measure admits better stability than the non-linear object $\det(D^2 u)$.

    \begin{proposition}[Stability of Monge-Amp\`ere measure, Proposition 2.6 \cite{Figalli_Mong_Amp} ] \label{prop: stability_Monge_Ampere}
        Suppose that $u_n \to u$ locally uniformly on $\rm{int}(\Omega)$. Then $\mu_{u_n} \rightharpoonup \mu_u$ in the weak-$*$ topology (in duality with $C_c(\rm{int}(\Omega))$). 
    \end{proposition}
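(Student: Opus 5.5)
The plan is to prove the two Portmanteau-type inequalities
\begin{equation*}
    \limsup_n \mu_{u_n}(K) \leq \mu_u(K), \qquad \liminf_n \mu_{u_n}(U) \geq \mu_u(U),
\end{equation*}
for every compact $K \subset \rm{int}(\Omega)$ and every open $U \Subset \rm{int}(\Omega)$. I would then deduce weak-$*$ convergence against $C_c(\rm{int}(\Omega))$ by the usual measure-theoretic argument.

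\textbf{Preliminary bounds.} Since $u_n \to u$ uniformly on a slightly larger compact neighbourhood $K'$ of $K$, the functions $u_n$ are uniformly bounded there. Convexity then gives a uniform Lipschitz bound on $K$: if $\mathrm{dist}(K, \partial K') \geq r$, every $p \in \partial u_n(x)$ with $x \in K$ satisfies $|p| \leq 2 \sup_{K'} |u_n| / r$. Hence all the sets $\partial u_n(K)$ lie in a fixed ball, and $\sup_n \mu_{u_n}(K) < +\infty$. This uniform local mass bound is what allows the final step from the two inequalities to weak-$*$ convergence.

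\textbf{Upper bound on compacts.} The key closure lemma is the following: if $x_n \to x$ in $\rm{int}(\Omega)$, $p_n \in \partial u_n(x_n)$ and $p_n \to p$, then $p \in \partial u(x)$. This follows by passing to the limit in $u_n(y) \geq u_n(x_n) + p_n\cdot(y-x_n)$, using local uniform convergence and the equicontinuity given by the Lipschitz bound. Combined with the uniform boundedness of subgradients, a compactness argument yields, for every $\varepsilon>0$ and all $n$ large enough,
\begin{equation*}
    \partial u_n(K) \subset (\partial u(K))_\varepsilon ,
\end{equation*}
the $\varepsilon$-neighbourhood of $\partial u(K)$. Indeed, otherwise we could extract $p_n \in \partial u_n(x_n)$ with $x_n \in K$, $\mathrm{dist}(p_n, \partial u(K)) \geq \varepsilon$, and a convergent subsequence would contradict the lemma. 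Since $\partial u(K)$ is compact (it is closed by the same lemma and bounded), $|(\partial u(K))_\varepsilon| \downarrow |\partial u(K)|$ as $\varepsilon \to 0$, which gives the limsup inequality.

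\textbf{Lower bound on opens (the main step).} Fix a compact $K \subset U$. Let $N$ be the set of $p$ that belong to $\partial u(x) \cap \partial u(y)$ for some $x \neq y$. Such $p$ are points of non-differentiability of the convex function $u^*$, so $|N|=0$. Take $p \in \partial u(K)\setminus N$, with $p \in \partial u(x_0)$ and $x_0 \in K$. Then $\ell(y) := u(y) - u(x_0) - p\cdot(y-x_0)$ is $\geq 0$ and vanishes only at $x_0$. Indeed, its zero set is convex and every point of it has $p$ as a subgradient, so it reduces to $\{x_0\}$. Choose a closed ball $\overline{B} \subset U$ centred at $x_0$. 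Then $\min_{\partial B} \ell = \delta > 0$. By uniform convergence on $\overline{B}$, for $n$ large the function $u_n - p\cdot y$ attains its minimum over $\overline{B}$ at an interior point $x_n$, since its boundary values exceed its value at $x_0$ by at least $\delta/2$. An interior minimum of a convex function minus a linear function gives $p \in \partial u_n(x_n) \subset \partial u_n(U)$. Thus $\mathbf{1}_{\partial u(K)\setminus N} \leq \liminf_n \mathbf{1}_{\partial u_n(U)}$ pointwise. Fatou's lemma yields $\mu_u(K) \leq \liminf_n \mu_{u_n}(U)$, and taking the supremum over $K \subset U$ (inner regularity of $\mu_u$) gives the liminf inequality. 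I expect this step to be the main obstacle: the measure-zero set $N$ of multiply-attained slopes is exactly what makes the strict-minimum argument work.

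\textbf{Conclusion.} For $\varphi \in C_c(\rm{int}(\Omega))$ with $\varphi \geq 0$, write $\int \varphi \dd{\mu} = \int_0^\infty \mu(\{\varphi > t\})\dd t$. Use the open-set inequality on $\{\varphi>t\}$, the compact-set inequality on $\{\varphi \geq t\}$, and the fact that $\mu_u(\{\varphi = t\}) = 0$ for all but countably many $t$. Together with the uniform mass bound on $\rm{supp}\,\varphi$ (dominated convergence in $t$), this gives $\int \varphi \dd{\mu_{u_n}} \to \int \varphi \dd{\mu_u}$. The case of general $\varphi$ follows by splitting into positive and negative parts.
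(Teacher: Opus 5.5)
Your proposal is correct and is essentially the standard proof of Proposition 2.6 in Figalli's book, which the paper only cites without reproving. The structure is the same: local Lipschitz bounds plus closedness of limits of subgradients give $\limsup_n \mu_{u_n}(K)\le\mu_u(K)$; the strict-support argument away from the null set of slopes attained at two distinct points, together with Fatou, gives $\liminf_n \mu_{u_n}(U)\ge\mu_u(U)$; and a portmanteau step concludes. The one step that needs an extra line is $|N|=0$, because $u^*$ may take the value $+\infty$: points of $N$ have nonempty $\partial u^*$, so they lie either in the interior of the convex effective domain of $u^*$, where Rademacher applies, or on its boundary, which is Lebesgue-null as the boundary of a convex set.
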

    
    We will say that an inequality of the form $\det(D^2 u) \geq \lambda$ holds in the Monge-Amp\`ere sense if one has $\mu_u \geq \lambda \cdot \cl{L}^d$ in the sense of measures. We shall also sometimes write $\det(D^2 u)^{1/d} \geq \lambda$ as a shorthand for $\det(D^2 u) \geq \lambda^d$. The AM-GM inequality $\Delta u \geq d \cdot\det(D^2 u)^{1/d}$ can be extended to obtain a sub-harmonic bound from a Monge-Amp\`ere lower bound.

    \begin{lemma}[Sub-harmonicity from Monge-Amp\`ere lower bound] \label{lemma: AM_GM_Monge_Ampere}
        Suppose that $\det(D^2 u)^{1/d} \geq \lambda$ in the Monge-Amp\`ere sense. Then $\Delta u \geq d \cdot \lambda$ in the viscosity/weak sense. 
    \end{lemma}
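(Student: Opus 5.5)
The lemma to prove is: if $\det(D^2u)^{1/d}\ge\lambda$ in the Monge--Amp\`ere sense, then $\Delta u\ge d\lambda$ in the viscosity (equivalently distributional) sense. The plan is to show that $u$ is a viscosity supersolution of $\Delta u = d\lambda$ by a touching-from-below argument that compares Monge--Amp\`ere measures, and then to pass to the distributional statement using the standard equivalence for convex functions.

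\textbf{Step 1: the touching argument.} Fix $x_0\in\mathrm{int}(\Omega)$ and a $C^2$ function $\varphi$ touching $u$ from below at $x_0$, so $\varphi\le u$ near $x_0$ and $\varphi(x_0)=u(x_0)$. We must show $\Delta\varphi(x_0)\ge d\lambda$. Suppose instead $\Delta\varphi(x_0)<d\lambda$.

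\begin{itemize}
\item Since $u$ is convex and $\varphi$ touches it from below, we have $D^2\varphi(x_0)\le D^2u$ in the touching sense. We cannot yet conclude $D^2\varphi(x_0)\ge0$, because touching from below does not force convexity of $\varphi$.
\item First reduce to the case $u$ strictly above $\varphi$ away from $x_0$: replace $\varphi$ by $\varphi-\delta|x-x_0|^2$. This keeps $\Delta\varphi(x_0)<d\lambda$ for small $\delta$.
\item Then replace $\varphi$ by the quadratic $q(x)=\varphi(x_0)+\nabla\varphi(x_0)\cdot(x-x_0)+\tfrac12\langle A(x-x_0),x-x_0\rangle$, where $A=D^2\varphi(x_0)+\eta I$ with $\eta>0$ small. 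This gives $q<u$ on $\partial B_r(x_0)$, $q(x_0)=u(x_0)$, and $\mathrm{tr}A<d\lambda$.
\end{itemize}

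\textbf{Step 2: the matrix $A$ is nonnegative.} The convex function $u$ lies above the quadratic $q$ near $x_0$ with equality at $x_0$. Take any supporting plane $\ell$ of $u$ at $x_0$. In a direction $e$ where $A$ has a negative eigenvalue, $q$ bends down, so $q\le\ell$ along $e$. This is compatible with the touching, so convexity alone does not force $A\ge0$. Two cases follow.

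\begin{itemize}
\item If $A\ge0$, set $q_\epsilon=q+\epsilon$. By Step 1, $q_\epsilon>u$ at $x_0$ and $q_\epsilon<u$ on $\partial B_r$. The comparison principle for Monge--Amp\`ere measures, via the Aleksandrov section argument (see \cite{Figalli_Mong_Amp}), gives $\partial u[\{u<q_\epsilon\}]\subset\partial q_\epsilon[\{u<q_\epsilon\}]$. Hence
\[
\lambda^d|\{u<q_\epsilon\}|\le\mu_u(\{u<q_\epsilon\})\le\det(A)\,|\{u<q_\epsilon\}|.
\]
By AM--GM, $\det(A)^{1/d}\le\mathrm{tr}A/d<\lambda$. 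This is a contradiction, since the set $\{u<q_\epsilon\}$ is open and nonempty, so it has positive measure.
\item If $A$ has a negative eigenvalue, I expect a contradiction with $\mu_u\ge\lambda^d\mathcal L^d$. Along the direction $e$, $u$ is pinched near $x_0$ between a plane and a downward parabola from below. A cleaner route is to apply the case above to $\max(q,\ell)$ replaced by a convex quadratic minorant $\tilde q$ with $\tilde A=A^+$, the positive part of $A$. Since $q\le\tilde q$ is not guaranteed, I would instead use $\tilde A=A^++\eta' I$ restricted to a thin slab. This reduction is the main technical obstacle. I would first try to avoid it altogether by working distributionally.
\end{itemize}

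\textbf{Step 3: the distributional route, which I would try first.} Mollify $u_\epsilon=u*\rho_\epsilon$. Concavity of $\det^{1/d}$ on nonnegative matrices, together with a Jensen-type inequality for Monge--Amp\`ere measures, gives
\[
\det(D^2u_\epsilon)^{1/d}\ge\bigl((\mu_u)_{ac}^{1/d}\bigr)*\rho_\epsilon\ge\lambda,
\]
where the first inequality is the known superadditivity or concavity property of $\mu^{1/d}$ under convolution. A cleaner alternative, which avoids that property, is to use the Alexandrov second derivatives $D^2_Au$, which exist a.e. Then $\det(D^2_Au)=(\mu_u)_{ac}\ge\lambda^d$ a.e., by Lebesgue differentiation of $\mu_u$ at Alexandrov points, a standard fact in \cite{Figalli_Mong_Amp}. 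The distributional Laplacian $\Delta u$ is a measure whose absolutely continuous part is $\mathrm{tr}\,D^2_Au$, and whose singular part is nonnegative because $u$ is convex. Pointwise AM--GM then gives $\mathrm{tr}\,D^2_Au\ge d\det(D^2_Au)^{1/d}\ge d\lambda$ a.e., so $\Delta u\ge d\lambda$ as distributions.

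Finally, for convex (hence continuous) $u$, distributional and viscosity supersolutions of the Laplacian coincide. This completes the proof. The delicate point in this route is the identification $(\mu_u)_{ac}=\det D^2_Au$.
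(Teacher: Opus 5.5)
Your Steps 1--2 test the wrong side of the inequality, and they cannot be repaired as written. A viscosity lower bound $\Delta u\ge d\lambda$ is tested with $C^2$ functions touching $u$ from \emph{above}. Touching from below only bounds $D^2\varphi(x_0)$ from above. Take $u=\tfrac{\lambda}{2}|x|^2$, so that $\mu_u=\lambda^d\mathcal{L}^d$. The functions $\varphi=-M|x|^2$, $M\ge 0$, touch $u$ from below at $0$ with $\Delta\varphi(0)=-2dM$. So the contradiction you seek in Step 1 does not exist, and the ``negative eigenvalue'' case of Step 2 is a symptom of this.

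The comparison in Step 2 is also reversed. On $U=\{u<q_\epsilon\}$ the function $u$ is the lower one, with equality on $\partial U$. Aleksandrov's lemma therefore gives $\partial q_\epsilon[U]\subset\partial u[U]$, i.e.\ $\det(A)\,|U|\le\mu_u(U)$, which is perfectly consistent with $\mu_u\ge\lambda^d\mathcal{L}^d$.

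Flipping the direction fixes everything at once. If $\varphi$ touches $u$ from above at $x_0$ and $\ell$ is a supporting affine function of $u$ at $x_0$, then $\varphi-\ell$ has a local minimum at $x_0$, so $A=D^2\varphi(x_0)\succeq 0$ for free. Take $q$ the quadratic with Hessian $A+\eta I$ and $U=\{u>q-\epsilon\}$. Now $q-\epsilon$ is the lower function, and you get $\lambda^d|U|\le\mu_u(U)\le\det(A+\eta I)\,|U|$. This is impossible, since $\det(A+\eta I)^{1/d}\le\frac1d\operatorname{tr}(A+\eta I)<\lambda$ for small $\eta$ when $\operatorname{tr}A<d\lambda$. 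This is exactly the paper's proof. It quotes the viscosity formulation of $\det D^2u\ge\lambda^d$ (test functions from above), observes that convexity forces $D^2\varphi(x_0)\succeq 0$, and applies AM--GM to $D^2\varphi(x_0)$.

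Your Step 3, in its Alexandrov-Hessian form, is by contrast a correct proof by a genuinely different route. For convex $u$ the distributional Hessian is a positive semi-definite matrix measure whose absolutely continuous part has density $D^2_Au$. Hence $\Delta u\ge\operatorname{tr}(D^2_Au)\,\mathcal{L}^d\ge d\det(D^2_Au)^{1/d}\,\mathcal{L}^d$, and it remains to show $\det D^2_Au\ge(\mu_u)_{ac}\ge\lambda^d$ a.e. Only this inequality is needed, not the full identification you flag, and it is the easy half. At a.e.\ point $x$, $u$ is twice differentiable in Alexandrov's sense and $\mu_u(B_r(x))/|B_r(x)|\to(\mu_u)_{ac}(x)$. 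There, the strong form of Alexandrov's theorem, $\partial u(y)\subset\nabla u(x)+D^2_Au(x)(y-x)+o(|y-x|)B_1$, bounds this ratio by $\det D^2_Au(x)+o(1)$.

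This yields the distributional inequality directly, which is the form in which it is fed into Lemma~\ref{lemma: L1-Linfty_regularization}. The viscosity form then follows from the standard equivalence for continuous functions; note that $\Delta u\ge d\lambda$ is the \emph{sub}solution side. The paper's route is a two-line argument because all the measure theory is hidden in the cited equivalence between Monge--Amp\`ere-measure and viscosity inequalities. Your route avoids viscosity theory for Monge--Amp\`ere, at the cost of Alexandrov's theorem and the structure of the Hessian measure of a convex function. The mollified variant is not independent: its first inequality is justified precisely by this decomposition plus concavity of $\det^{1/d}$.
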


    \begin{proof}
        By \cite[Proposition 7.7]{Monge_Ampere_Nam_Qe_Le}, the inequality $\det(D^2 u) \geq \lambda^d$ also holds in the viscosity sense in $\rm{int}(\Omega)$. Furthermore, by \cite[Theorem 7.2]{Monge_Ampere_Nam_Qe_Le}, convexity of $u$ implies that we have $\lambda_{min}(D^2 u) \geq 0$ in the viscosity sense. We argue that this implies the asserted result. Indeed, fix $\psi \in C^2(\bb{R}^d)$ such that $\psi - u$ admits a local minimum at some point $x_0 \in \rm{int}(\Omega)$, then combining the two viscosity inequalities, we have that $D^2 \psi(x_0)$ is symmetric semi-definite positive, and $\det(D^2 \psi(x_0)) \geq \lambda^d$. But by the AM-GM inequality, for any symmetric semi-definite positive matrix $N$, we have $\Tr N \geq d \cdot \det(N)^{1/d}$, therefore we have
        \begin{equation*}
            \Delta \psi(x_0) = \Tr D^2 \psi(x_0) \geq d \cdot \det(D^2 \psi(x_0))^\frac{1}{d} \geq d \cdot \lambda
        \end{equation*}
        concluding the proof by the arbitrariness of $\psi$ and $x_0$. 
    \end{proof}

\subsection{A $L^1-L^\infty$-regularization Lemma}

    Doing approximation, we shall need a $L^1-L^\infty$ regularization Lemma for functions admitting some sub-harmonic lower bound. We define the function $h_m$ by
    \begin{equation*}
        h_m(z) := 
        \begin{cases}
            z^{m-1} & m > 1 \\
            \log z & m = 1 \\
            -z^{m-1} & m < 1
        \end{cases}
    \end{equation*}

    \begin{lemma}[$L^1$-$L^\infty$ regularization effects] \label{lemma: L1-Linfty_regularization}
        Suppose $m > m_c^1$. Let $\bb{B}_2 = B_{2r}(x_0)$ be some ball, and let $\bb{B}_1 = B_r(x_0)$. Then if $g \in L^1_+(\bb{B}_2)$, with $||g||_{L^1(\bb{B}_{2})} \leq 1$, is such that $\Delta h_m(g) \geq -K$ weakly on $\bb{B}_2$ (assuming $g > 0$ if $m \leq 1$). Then there exists constants $r_*(d,m,K)$ and $M(r,d,m,K)$ such that if $r < r_*$ then $g \in L^\infty(\bb{B}_1)$ with $||g||_{L^\infty(\bb{B}_1)} \leq M$. 
    \end{lemma}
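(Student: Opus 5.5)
The plan is a Moser-type iteration, or equivalently a mean-value argument for subharmonic functions, applied to a suitable power of $g$. I treat the three regimes of $h_m$ separately but along a common route. The weak inequality $\Delta h_m(g) \geq -K$ means that $w := h_m(g) + \frac{K}{2d}|x-x_0|^2$ is weakly subharmonic on $\bb{B}_2$. Weakly subharmonic functions are locally integrable, satisfy the sub-mean-value property on balls, and so are bounded above locally by their averages. Concretely, for $y \in \bb{B}_1$ and $B_r(y) \subset \bb{B}_2$,
\begin{equation*}
h_m(g)(y) \leq \fint_{B_r(y)} h_m(g) \, dx + \frac{K}{2d}\left(\fint_{B_r(y)} |x-x_0|^2\,dx - |y-x_0|^2\right) + C K r^2 ,
\end{equation*}
with the quadratic correction of order $Kr^2$. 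The remaining task is to bound $\fint_{B_r(y)} h_m(g)$ using only $\|g\|_{L^1} \leq 1$.

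\emph{Case $m=1$.} Jensen's inequality gives $\fint \log g \leq \log \fint g \leq \log(|B_r|^{-1})$. Hence $\log g(y) \leq \log(C_d r^{-d}) + CKr^2$, which yields an $L^\infty$ bound on $\bb{B}_1$ with no smallness condition on $r$.

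\emph{Case $m<1$.} Here $h_m(g) = -g^{m-1}$ and the sub-mean-value inequality reads $-g(y)^{m-1} \leq -\fint g^{m-1} + CKr^2$. The function $z\mapsto z^{m-1}$ is convex, so Jensen gives $\fint g^{m-1} \geq (\fint g)^{m-1} \geq (C_d r^{-d})^{m-1}$. Therefore $g(y)^{m-1} \geq c_d r^{d(1-m)} - CKr^2$. This lower bound is positive, and gives an upper bound on $g(y)$, exactly when $r^{d(1-m)} \gg K r^2$, i.e. $r^{2-d(1-m)} < c/K$. Since $m > m_c^1$ means $d(1-m) < 2$, this holds for $r < r_*(d,m,K)$. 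This case explains both where the hypothesis $m>m_c^1$ enters and why $r$ has to be small.

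\emph{Case $m>1$.} Now $h_m(g) = g^{m-1}$. When $m-1\le 1$, concavity gives $\fint g^{m-1} \leq (\fint g)^{m-1} \le C r^{-d(m-1)}$ and we are done. When $m>2$, the $L^1$ bound does not control $\fint g^{m-1}$ directly. In that case I would use the standard fact that a nonnegative subharmonic function $w$ satisfies $\sup_{B_{\rho}} w \leq C(p,d)(\fint_{B_{2\rho}} w^p)^{1/p}$ for every $p>0$, together with a dyadic-covering absorption argument between nested balls $B_\rho \subset B_{\rho'}$ in $\bb{B}_2$. Apply this to $w = g^{m-1} + \frac{K}{2d}|x-x_0|^2$ with $p = 1/(m-1)$, and handle the shift by noting that $(a+b)^p \le C(a^p + b^p)$. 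This gives $\sup_{\bb{B}_1} g^{m-1} \le C(r^{-d}\|g\|_{L^1} + K r^2)^{m-1}$. For the reader, I would cite this $p<1$ estimate for subharmonic functions rather than reprove it.

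The main obstacle is this last step: the $p<1$ mean-value inequality for subharmonic functions, applied in a weak (distributional) setting. I would first reduce to smooth functions by mollification. Mollification preserves subharmonicity of $w$ (shrinking the ball slightly) and converges in $L^1_{loc}$ and a.e., so the pointwise bounds pass to the limit at Lebesgue points. For $m\le1$, I would also make sure that $h_m(g)\in L^1_{loc}$, so that the weak formulation makes sense; the mollification argument assumes this is implicit in the hypothesis.
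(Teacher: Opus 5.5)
Your proposal is correct, and for $m \le 1$ it is exactly the paper's argument. Both use the sub-mean-value property of $h_m(g)$ plus a quadratic on $B_r(y) \subset B_{2r}(x_0)$, then Jensen via concavity of $h_m$. (The paper's text says ``increasing convex'' but then uses concavity, as you do.) In the case $m<1$, the smallness of $r$ comes precisely from $d(1-m) < 2$.

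The difference is for $m>1$. There the paper gives no argument and simply cites \cite[Lemma A.3]{porous-medium}; you give a self-contained route.

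\begin{itemize}
\item For $1 < m \le 2$ you observe that the same Jensen argument still works, because $z \mapsto z^{m-1}$ is then concave.
\item Only for $m>2$ do you need a stronger input: the $L^p$ local maximum principle with $p = 1/(m-1) < 1$. You apply it to the nonnegative subharmonic function $g^{m-1} + \frac{K}{2d}|x-x_0|^2$. It is proved by the usual Young-inequality absorption between nested balls $B_{\rho} \subset B_{\rho'}$. That absorption needs $\sup_{B_{\rho'}} w < \infty$ for $\rho' < 2r$, which holds for the upper semicontinuous representative; one then lets $\rho' \uparrow 2r$.
\end{itemize}

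Your route shows that the real case split is at $m=2$ rather than $m=1$, and that the only nontrivial ingredient is a standard elliptic lemma. The paper's citation is shorter but leaves this structure hidden.

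One small slip: with $p = 1/(m-1)$, what you actually obtain is
\begin{equation*}
\sup_{B_r(x_0)} g^{m-1} \le C\bigl(r^{-d}\|g\|_{L^1} + (Kr^2)^{1/(m-1)}\bigr)^{m-1},
\end{equation*}
not $C(r^{-d}\|g\|_{L^1} + Kr^2)^{m-1}$. This is harmless, since the statement only asks for some constant $M(r,d,m,K)$.
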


    \begin{proof}
        The case $m > 1$ follows from \cite[Lemma A.3]{porous-medium}. In the case $m_c^1 < m \leq 1$, $h_m$ is increasing convex. Let $y \in \bb{B}_1$, then $B_r(y) \subset \bb{B}_2$, and using the sub-harmonicity of $h_m(g) + \frac{K}{2} |x-y|^2$ we have
        \begin{align*}
            h_m(g(y)) &\leq \fint_{B_r(y)} h_m(g) \dd{\cl{L}^d} + \frac{K}{2} \fint_{B_r(0)} |x|^2 \dd{x} \\
            &= \fint_{\bb{B}_1} h_m(g) \dd{\cl{L}^d} + c_d K r^2
        \end{align*}
        On the other hand, using concavity of $h_m$ and its monotony we can bound
        \begin{equation*}
            \fint_{B_r(y)} h_m(g) \dd{\cl{L}^d} \leq h_m \left ( \fint_{B_r(y)} g \dd{\cl{L}^d} \right ) \leq h_m \left ( \frac{1}{r^d \omega_d} ||g||_{L^1(\bb{B}_2)} \right ) 
        \end{equation*}
        where $\omega_d$ is the volume of $B_1(0)$. Combining the two bounds, and using that $||g||_{L^1(\bb{B}_2} \leq 1$, we obtain
        \begin{equation*}
            \begin{cases}
                \log g(y) \leq - \log \omega_d r^d + c_d K r^2 & m = 1 \\
                -g^{m-1}(y) \leq -\omega_d^{1-m} r^{d(1-m)}  + c_d K r^2 & m < 1 
            \end{cases}
        \end{equation*}
        In the first case, taking the exponential give the asserted $L^\infty(\bb{B}_1)$-bound. On the other hand, to get a $L^\infty$-bound in the second case, we need to ensure that the right-hand-side is negative. This is true provided that $r^{2 + d(m-1)} \leq C(d,m) K^{-1}$ for some constant $C(d,m)$, which concludes. 
    \end{proof}

\section{The JKO Scheme} \label{section: JKO_Scheme}

    The JKO scheme consists of iterating the following minimization problem:
\begin{equation}
    \rho \in \argmin_{\eta \in \cl{P}_2(\Omega)} \scr{E}_m[\eta] + \frac{1}{2 \tau} W_2^2(\eta,\mu)
\end{equation}
where ,throughout this section, we assume that $m > m_c^1$, and additionaly that $m > m_c^2$ if $\Omega$ is unbounded. As explained in the introduction, to handle the case $m < 1$, one would like to prove that minimizers are always absolutely continuous, unique, and characterize such minimizers by their optimality conditions, at least on bounded domains. 

For unbounded domain, we will focus on domains admitting some boundary regularity: we shall say that an convex domain $\Omega \subset \bb{R}^d$ is volume-regular if there exists a constant $V > 0$ such that $|\Omega \cap B_r(x)| \geq V r^d$ for all $x \in \Omega$, $r \leq \rm{diam}(\Omega)$. Any bounded convex domain is in fact volume regular, but for unbounded domain, one need to ensure for instance some uniform-Lipschitz regularity of the boundary to ensure that this is the case (note that convex domains has locally Lipschitz boundary, but the Lipschitz constant might blow up far from the origin for unbounded domains). We will see that this condition is enough to ensure uniqueness and absolutely continuity of minimizers. 

\subsection{Existence and Qualitative properties}
    Existence readily follows from the direct method using the results of the previous section. Uniqueness of minimizers is more involved: for $m \geq 1$ it follows immediately from the strict convexity of $\scr{E}_m$ and convexity of $W_2^2(\cdot,\mu)$. On the other hand, for $m < 1$, the entropy is no longer strictly convex, but it admits a weaker form of strict convexity, which, combined with optimality conditions, still gives uniqueness of minimizers. 

    \begin{proposition}[Existence of minimizers] \label{prop: existence_one_step}
        For any $\mu \in \cl{P}_2(\Omega)$, and additionally assume $\Omega$ to be volume-regular if $m < 0$. Then there exists a unique minimizer for the one-step JKO problem starting from $\mu$. We will denote by $Q_m^\tau[\mu]$ this unique minimizer. 
    \end{proposition}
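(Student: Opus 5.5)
The plan is to argue in three steps: existence by the direct method, absolute continuity of every minimizer, and uniqueness from strict convexity of the entropy on absolutely continuous measures. I read the hypothesis ``$m<0$'' as the regime where $m$ may be nonpositive (possible when $m_c^1<0$, i.e. $d\ge 3$). I would nonetheless keep volume-regularity available in the whole range $m<1$, since it is what allows mass to be spread into $\Omega$ near a point.

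\textbf{Existence.} Take a minimizing sequence $(\eta_n)$ for $\scr{E}_m[\eta]+\frac{1}{2\tau}W_2^2(\eta,\mu)$. If $\Omega$ is bounded, $\cl{P}_2(\Omega)$ is narrowly compact. By Proposition \ref{prop: m_entropy_lsc}(1) and the narrow lower semicontinuity of $W_2^2(\cdot,\mu)$, a limit point is then a minimizer; the infimum is finite because $f_m^*$ bounds $\scr{E}_m$ from below on bounded sets.

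If $\Omega$ is unbounded, then $m>m_c^2$ and I need a uniform bound on $M_2[\eta_n]$. The estimate $W_2^2(\eta,\mu)\ge \frac12 M_2[\eta]-M_2[\mu]$ combines with Proposition \ref{prop: entropy_lower_bound}. For $m<1$ that proposition gives $\scr{E}_m[\eta]\ge C\,M_2[\eta]^{\frac d2(1-m)}$ with $C<0$ and exponent $\frac d2(1-m)<1$ (since $m>m_c^1$); for $m=1$ it gives $C-\frac d2\log M_2$. In both cases the quadratic-in-distance term dominates, so the functional is coercive in $M_2$. This yields tightness together with bounded second moments, and Proposition \ref{prop: m_entropy_lsc}(3) gives lower semicontinuity along the sequence. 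For $m>1$ the entropy is nonnegative and the classical argument applies.

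\textbf{Absolute continuity for $m<1$.} This is the step I expect to be the main obstacle, since $\scr{E}_m$ ignores $\rho^\perp$ and is therefore not strictly convex on all of $\cl{P}_2(\Omega)$. I would first derive first-order conditions by perturbing a minimizer $\rho$ along $\rho_\varepsilon=(1-\varepsilon)\rho+\varepsilon\nu$, with $\nu$ absolutely continuous and bounded, and along perturbations that remove part of $\rho^\perp$. Using the Kantorovich potential $\psi$ from $\rho$ to $\mu$ (Theorem \ref{thm: Kantorovich_duality}) and the first variation of $W_2^2$, this should give
\[
f_m'(\rho^{ac})+\frac{\psi}{\tau}\ \ge\ C \ \text{ a.e.},\quad\text{with equality } \rho^{ac}\text{-a.e.},\qquad \frac{\psi}{\tau}\le C\ \ \rho^\perp\text{-a.e.}
\]
The last inequality comes from the recession slope $\lim_{t\to\infty}f_m'(t)=0$.

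Suppose $x_0$ lies in the support of $\rho^\perp$. Since $\psi$ is Lipschitz and $1$-semiconcave, $\psi(x)\ge \psi(x_0)-L|x-x_0|$ near $x_0$, and hence $-f_m'(\rho^{ac}(x))\lesssim |x-x_0|$. I would aim for the quantitative version: an explicit competitor spreading a mass $\varepsilon$ of $\rho^\perp$ near $x_0$ uniformly on $\Omega\cap B_r(x_0)$. Volume-regularity guarantees $|\Omega\cap B_r(x_0)|\ge Vr^d$. The entropy gain on the region where $\rho^{ac}$ is small should be of order $\varepsilon^m r^{d(1-m)}$, while the transport cost is $O(\varepsilon r)$; as $\varepsilon\to0$ the gain wins, which contradicts minimality.

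The delicate point is that $\rho^{ac}$ may be large on that ball. Then one must choose the spreading scale using the first-order bound above, or a Lebesgue-differentiation argument at $\rho^\perp$-typical points, so that the set where $\rho^{ac}\le \varepsilon r^{-d}$ keeps a definite fraction of the ball. I would try this first, and fall back on the dual/optimality argument if the balance of exponents fails.

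\textbf{Uniqueness.} For $m\ge1$, uniqueness is immediate: $\scr{E}_m$ is strictly convex on absolutely continuous measures, $W_2^2(\cdot,\mu)$ is convex under linear interpolation, and both are finite at minimizers. For $m<1$, let $\rho_1,\rho_2$ be two minimizers. By the previous step both are absolutely continuous, and so is their midpoint $\frac12(\rho_1+\rho_2)$, which is again a minimizer by convexity. Strict convexity of $f_m$ on $(0,\infty)$ then forces $\rho_1=\rho_2$ a.e., and we may define $Q_m^\tau[\mu]$ as this unique minimizer.
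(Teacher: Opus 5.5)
The existence step and the reduction of uniqueness to absolute continuity are fine and agree with the paper. The absolute-continuity step for $m<1$, however, has a genuine gap: a local competitor cannot produce a contradiction. Suppose a minimizer had $\rho^\perp\neq0$. It satisfies your first-order conditions, with equality $f_m'(\rho^{ac})+\psi/\tau=C$ a.e.\ because $\rho^{ac}>0$ a.e.\ (Proposition \ref{prop: positivity_integrability}). Combine this with $\psi(x_0)\le\tau C$ and with $\psi(x)\le\psi(x_0)+L|x-x_0|$ (this is the direction you need, not the one you wrote). You get $0\le-f_m'(\rho^{ac}(x))\le L|x-x_0|/\tau$ near $x_0$. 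This has two consequences:
\begin{itemize}
\item $\rho^{ac}$ is \emph{large} near $x_0$: $\rho^{ac}(x)\gtrsim|x-x_0|^{-1/(1-m)}$.
\item By convexity of $f_m$, spreading a mass $\varepsilon$ uniformly on $B_r(x_0)\cap\Omega$ lowers $\scr{E}_m$ by at most $\varepsilon Lr/\tau$, which is the same order as your transport cost.
\end{itemize}
So the gain never beats the cost, for any $(\varepsilon,r)$ and at any $\rho^\perp$-typical point. In particular, there is no ``definite fraction of the ball'' where $\rho^{ac}\le\varepsilon r^{-d}$ in the regime where $\varepsilon^m r^{d(1-m)}$ would dominate $\varepsilon r$. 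For fixed $r$ and $\varepsilon\to0$ the gain is linear in $\varepsilon$, since $\{\rho^{ac}\le\varepsilon r^{-d}\}$ becomes negligible. This is expected: a minimizer satisfies the first-order conditions with equality.

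The contradiction must instead come from the total mass: the optimality conditions force a non-integrable lower bound on $\rho^{ac}$ near $x_0$. Your linear bound gives $|x-x_0|^{-1/(1-m)}$, which is non-integrable only for $m\ge1-\frac1d$. That misses $(m_c^1,1-\frac1d)$, for example $m\in(0,\frac12)$ when $d=2$. The paper (Theorem \ref{thm: optimality_conditions}, on bounded $\Omega$) uses two ingredients your plan lacks:
\begin{itemize}
\item[(i)] The reverse inequality $\psi\ge\tau C$ on all of $\Omega$, obtained from singular test perturbations (Steps 2--3 there). Hence $\tau C=\min\psi$ as soon as $\rho^\perp\neq0$.
\item[(ii)] Lemma \ref{lemma: quadratic_deviation_min}: a $c$-concave potential deviates at most quadratically from its minimum. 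At a minimum point $x_0$ this gives $\tau f_m'(\rho^{ac}(x))\ge-\frac12|x-x_0|^2$, i.e.\ $\rho^{ac}\gtrsim|x-x_0|^{-2/(1-m)}$. This is non-integrable on an interior cone at $x_0$ exactly because $m>m_c^1$.
\end{itemize}

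Finally, on unbounded $\Omega$ your uniqueness step needs \emph{every} minimizer to be absolutely continuous. The paper needs only \emph{one}, via Corollary \ref{coro: absolute_continuity_unbounded}: approximate by $\Omega\cap B_N(0)$, get uniform $L^\infty$ bounds from Lemma \ref{lemma: quadratic_deviation_min} and volume-regularity, and pass to the limit with Proposition \ref{prop: stability_JKO}. One is enough: if $\rho_1$ is absolutely continuous and $\rho_2$ is any minimizer, equality in the convexity inequality at their midpoint forces $\rho_2^{ac}=\rho_1$. Unit mass then forces $\rho_2^\perp=0$.
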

    
    \begin{proof}[Proof of Proposition \ref{prop: existence_one_step}]
        We divide the proof into existence and uniqueness.
        \begin{itemize}
            \item \textbf{Existence}: In bounded domains, this follows immediately from l.s.c. of the entropy and Wasserstein distance (\cite[Proposition 7.4]{OT_Filippo}) and from compactness of sequences of probability measures for narrow convergence on such sets. We now focus on the case where $\Omega$ is unbounded (and therefore $m > m_c^2$). Let $\rho \in \cl{P}_2(\Omega)$. Integrating the inequality $\frac{1}{2} |x-y|^2 \geq \frac{1}{4} |x|^2 - \frac{1}{4} |y|^2$ against any optimal transport plan from $\rho$ to $\mu$, we obtain $\frac{1}{2 \tau} W_2^2(\rho,\mu) \geq \frac{1}{4 \tau} M_2[\rho] - \frac{1}{4 \tau} M_2[\mu]$. Combined with Proposition \ref{prop: entropy_lower_bound} this gives
            \begin{equation} \label{eq: lower_bound_JKO}
                \scr{E}_m[\rho] + \frac{1}{2 \tau} W_2^2(\rho,\mu) \geq \begin{cases}
                    \frac{1}{4 \tau} M_2[\rho] + C(m,d) M_2[\rho]^{\frac{d}{2}(1-m)} - \frac{1}{4 \tau} M_2[\mu] & m \neq 1 \\
                    \frac{1}{4 \tau} M_2[\rho] - \frac{d}{2} \log M_2[\rho] - C(1,d) - \frac{1}{4 \tau} M_2[\mu] & m = 1
                \end{cases}
            \end{equation}
            This implies that any minimizing sequence has uniformly bounded second moment, which gives narrow compactness of such sequences by Prokhorov's theorem. Combining this with Proposition \ref{prop: m_entropy_lsc} we can again use the direct method to conclude. 
    
            \item \textbf{Uniqueness}: For $m \geq 1$, this follows immediately from strict convexity of $\scr{E}_m$ and convexity of $W_2^2(\cdot,\mu)$. For $m < 1$, we only have the following weaker version of strict convexity: if $\rho,\eta$ are such that $\scr{E}_m[t \rho + (1-t) \eta] = t \scr{E}_m[\rho] + (1-t) \scr{E}_m[\eta]$ for some $t \in (0,1)$, then $\rho^{ac} = \eta^{ac}$. In particular, uniqueness holds if there exists at least one absolutely continuous minimizer. As this is the case if $\Omega$ is volume-regular by Corollary \ref{coro: absolute_continuity_unbounded}, we can conclude on the uniqueness. \qedhere
        \end{itemize}
    \end{proof}

\subsection{Optimality conditions}

    Before deriving the optimality conditions, let's prove a qualitative behavior in the case $m \leq 1$. 

    \begin{proposition}[Positivity and integrability of optimizers] \label{prop: positivity_integrability}
        Let $\mu \in \cl{P}_2(\Omega)$, $\rho$ a minimizer for the one-step JKO problem starting from $\mu$. If $m \leq 1$, then $\rho^{ac} > 0$ a.e. and $f_m'(\rho^{ac}) \in L^1_{loc}(\Omega)$
    \end{proposition}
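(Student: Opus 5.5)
The plan is a competitor argument: perturb $\rho$ by a small linear mixture with a reference density and show that if $\rho^{ac}$ vanished on a set of positive measure, the entropy would drop at infinite rate while the Wasserstein cost grows only linearly. Fix a compact set $K\Subset\Omega$ of positive measure, let $\nu := |K|^{-1}\mathbf 1_K$, and set $\rho_\varepsilon := (1-\varepsilon)\rho + \varepsilon\nu$ for $\varepsilon\in(0,1)$. Since $W_2^2(\cdot,\mu)$ is convex along linear interpolations,
\begin{equation*}
\frac{1}{2\tau}W_2^2(\rho_\varepsilon,\mu) \le \frac{1}{2\tau}W_2^2(\rho,\mu) + \frac{\varepsilon}{2\tau}\bigl(W_2^2(\nu,\mu) - W_2^2(\rho,\mu)\bigr),
\end{equation*}
which is a first-order change bounded by $C\varepsilon$, with $C$ finite because $\nu$ has compact support and $\mu\in\cl{P}_2(\Omega)$. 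Optimality of $\rho$ then forces
\begin{equation*}
\scr{E}_m[\rho_\varepsilon] - \scr{E}_m[\rho] \ge -C\varepsilon .
\end{equation*}
Note that the singular part only rescales, and it does not enter $\scr{E}_m$ when $m<1$.

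To prove positivity, let $A := \{\rho^{ac}=0\}\cap K$ and suppose $|A|>0$. I split the entropy difference into the integral over $A$ and the integral over $K\setminus A$; off $K$ the density is multiplied by $1-\varepsilon$, which is harmless.
\begin{itemize}
\item On $A$ the integrand becomes $f_m(\varepsilon/|K|)$. For $m<1$ this is $-\frac{1}{1-m}(\varepsilon/|K|)^m$, and for $m=1$ it is $\frac{\varepsilon}{|K|}\log\frac{\varepsilon}{|K|}$. In both cases the contribution is $\ll -\varepsilon$ as $\varepsilon\to0$, since $f_m'(0^+)=-\infty$.
\item Elsewhere, convexity of $f_m$ gives $f_m((1-\varepsilon)a+\varepsilon b)-f_m(a) \le \varepsilon\,(f_m(b)-f_m(a))$. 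The total error is therefore at most $\varepsilon\bigl(\int_K f_m(\nu) - \int f_m(\rho^{ac})\bigr) = O(\varepsilon)$.
\end{itemize}
The last step uses $\scr{E}_m[\rho]$ finite: it is finite because $\rho$ is a minimizer, and bounded below by Proposition \ref{prop: entropy_lower_bound} or by boundedness of $\Omega$. The super-linear gain on $A$ then contradicts the inequality above. Since $K$ is arbitrary, $\rho^{ac}>0$ a.e.

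For local integrability, $f_m'(z)$ equals $\frac{m}{m-1}z^{m-1}$ (negative, and large near $0$) or $\log z+1$. It is bounded above on $\{\rho^{ac}\ge 1\}$ for $m<1$, and controlled by $f_m(z)$ plus a constant for $m=1$. So the issue is $\int_K [f_m'(\rho^{ac})]_-<\infty$. I would use the directional derivative at $\varepsilon=0^+$ of the same perturbation. For $z>0$, the difference quotients $\varepsilon^{-1}\bigl(f_m((1-\varepsilon)z+\varepsilon b)-f_m(z)\bigr)$ decrease, by convexity, to $f_m'(z)(b-z)$. Monotone convergence (the quotients are bounded above by $f_m(b)-f_m(z)$, which is integrable) then yields
\begin{equation*}
\int f_m'(\rho^{ac})(\nu-\rho^{ac}) \ge -C .
\end{equation*}
Splitting this, $\int_K f_m'(\rho^{ac})\,\nu$ is bounded below. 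This requires $-\int f_m'(\rho^{ac})\rho^{ac}$ to be bounded above; I expect this to be the main obstacle, and I would handle it with the elementary relation $z f_m'(z) = m f_m(z)$ for $m<1$ (respectively $z\log z + z$ for $m=1$), which is integrable by finiteness of the entropy and unit mass. Since $\nu$ is a positive constant on $K$, this gives $f_m'(\rho^{ac})\in L^1(K)$.

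The technical care lies in justifying monotone convergence with a sign-changing limit, and, on unbounded $\Omega$, in the integrability of $f_m(\rho^{ac})$. The latter is handled by the Fenchel bound with $b=1+|x|^q$ from the proof of Proposition \ref{prop: m_entropy_lsc}.
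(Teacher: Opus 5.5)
Your argument is correct and follows essentially the same route as the paper: a linear mixture with a reference density, and convexity of $W_2^2(\cdot,\mu)$ along linear interpolation. Positivity comes from the infinite slope $f_m'(0^+)=-\infty$, and integrability from the first-variation inequality combined with $z f_m'(z)=m f_m(z)$ (resp.\ $z\log z+z$ for $m=1$). The differences are only technical: the paper uses a global reference density $\xi$ with $|f_m'(\xi)|\le A(1+|x|^2)$, together with the lower bound $f_m'(g_\epsilon)(g-\xi)\ge f_m'(\xi)(g-\xi)$, Fatou and dominated convergence. Your compactly supported $\nu=|K|^{-1}\mathbf{1}_K$ with monotone convex difference quotients bounded by the chord avoids any tail condition on the reference density and gives exactly the local statement.
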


    \begin{proof}
        We closely follow the proof of \cite[Lemma 8.6]{OT_Filippo} with minor modifications. Let $\xi \in \cl{P}_{2,ac}(\Omega)$ be a constant density if $\Omega$ bounded, and in the case of $\Omega$ unbounded, proportional to $e^{-|x|^2}$ for $m = 1$, and to $(1+|x|^2)^\frac{1}{m-1}$ for $m < 1$. It satisfies $\scr{E}_m[\xi] < +\infty$ and $|f'_m(\xi)| \leq A(1+|x|^2)$ for some constant $A > 0$.
        
        Let $\rho$ be a minimizer, with absolutely continuous part $g$, and set $\rho_\epsilon = \epsilon \xi + (1-\epsilon) \rho$ whose absolute continuous part is $g_\epsilon = \epsilon \xi + (1-\epsilon) g$. We will first prove that $g > 0$ a.e., then we will derive the integrability of $f'_m(g) \xi$, which will conclude. 
        \begin{itemize}
            \item Using convexity of $W_2^2(\cdot,\mu)$ and optimality of $\rho$, if $\Sigma = \{g = 0 \}$ we have
            \begin{equation*}
                -\int_{\Sigma} f_m(\epsilon \xi) \dd{\cl{L}^d} + \int_{\Omega \setminus \Sigma} (f_m(g) - f_m(g_\epsilon)) \dd{\rho} \leq \frac{\epsilon}{2 \tau}(W_2^2(\xi,\mu) - W_2^2(\rho,\mu))
            \end{equation*}
            By convexity of $f_m$, we have $f_m(g) - f_m(g_\epsilon) \geq f_m'(g_\epsilon)(g-g_\epsilon) = \epsilon f_m'(g_\epsilon)(g - \xi)$. However, using the monotonicity of $f_m'$, we have
            \begin{equation*}
                f_m'(g_\epsilon)(g-\xi) = \frac{1}{1-\epsilon} f_m'(g_\epsilon)(g_\epsilon - \xi) \geq \frac{1}{1-\epsilon} f_m'(\xi)(g_\epsilon - \xi) = f_m'(\xi)(g-\xi)
            \end{equation*}
            By hypothesis $|f_m'(\xi)| \leq A(1+|x|^2)$, which implies that $f_m'(\xi)(g-\xi) \in L^1(\Omega)$. Integrating, we obtain
            \begin{equation*}
                - \int_{\Sigma} f_m(\epsilon \xi) \dd{\cl{L}^d} \leq \epsilon \left ( \int_{\Omega} f_m'(\xi)(\xi-g) \dd{\cl{L}^d} + \frac{1}{2 \tau} W_2^2(\xi,\mu) - \frac{1}{2 \tau} W_2^2(\rho,\mu) \right ) 
            \end{equation*}
            Therefore $\epsilon^{-1} \int_{\Sigma} f_m(\epsilon \xi) \dd{\cl{L}^d}$ is bounded from below. 
            \begin{itemize}
                \item For $m = 1$, this is equal to $\log(\epsilon) \int_\Sigma \xi \dd{\cl{L}^d} + \epsilon \int_\Sigma \xi \log(\xi) \dd{\cl{L}^d}$ which converges to $-\infty$ if $|\Sigma| \neq 0$ as $\xi > 0$.
                \item For $m < 1$, this is equal to $\frac{m}{m-1} \epsilon^{m-1} \int_{\Sigma} \xi^m \dd{\cl{L}^d}$ which again converges to $-\infty$ if $|\Sigma| \neq 0$ as $\xi > 0$.  
            \end{itemize}
            Hence we have $|\Sigma| \neq 0$ and $g > 0$ a.e. 
            \item Rewriting the previous inequalities using $g > 0$ a.e. we had
            \begin{align*}
                &\int_\Omega f_m'(g_\epsilon)(g-\xi) \dd{\cl{L}^d} \leq \frac{1}{2 \tau} W_2^2(\xi,\mu) - \frac{1}{2 \tau} W_2^2(\rho,\mu) \\
                &f_m'(g_\epsilon)(g-\xi) \geq f_m'(\xi)(g-\xi) \in L^1(\Omega)
            \end{align*}
            Applying Fatou's lemma to the positive part, and dominated convergence to the negative part, using the second inequality. We can pass to the limit $\epsilon \to 0$ in both inequalities, which provides the integrability of $f'_m(g)(g-\xi)$. Finally, as $f'_m(g) g = m f_m(g) \in L^1(\Omega)$ by $-\infty < \scr{E}_m[\rho] < +\infty$, we conclude on the integrability of $f'_m(g) \xi$. \qedhere
        \end{itemize}
    \end{proof}

    \begin{remark}
        In bounded domains, one shall be able, using the results of \cite[Appendix B]{cournot_nash_Inada}, to derive that $\rho^{ac} \geq \delta(m,\tau,\Omega) > 0$. This is due to the fact that $f_m$ satisfies the lower Inada condition $f'_m(0) = -\infty$. Similarly, for $m \geq 1$, one can obtain $\rho^{ac} \leq M(m,\tau,\Omega)$ by the upper Inada condition $f'_m(+\infty) = +\infty$. 
    \end{remark}
    
    In the super-linear case $m \geq 1$, the optimality conditions in the JKO scheme are pretty well understood. We refer for example to \cite[Section 7.4.1]{OT_Filippo} or to \cite{L1_contraction}. For $m < 1$, we will follows ideas used by Khanh and Santambrogio in the context of $q$-moment measure in \cite{q_moment_mes} to show that a minimizer can't have a singular part. 

    \begin{theorem}[Optimality conditions] \label{thm: optimality_conditions}
        Let $\Omega$ bounded, and $\mu \in \cl{P}_2(\Omega)$. Let $\rho$ be a minimizer of the one-step JKO from $\mu$. Then there exists a tuple of Kantorovitch potentials $(\psi,\phi)$ from $\rho$ to $\mu$ such that:
        \begin{enumerate}
            \item $m = 1$: We have 
            \begin{equation*}
                \tau \log \rho = -\psi
            \end{equation*}
            \item $m > 1$: We have
            \begin{equation*}
                \tau \frac{m}{m-1} \rho^{m-1} = [-\psi]_+
            \end{equation*}
            where $[z]_+ = \max(z,0)$. In particular, $\tau f'_m(\rho) + \frac{1}{2} |x|^2 = \max(\frac{1}{2} |x|^2 - \psi, \frac{1}{2} |x|^2)$ is convex. 
            \item $m_c^1 < m < 1$: $\rho$ is absolutely continuous, $\psi < 0$, and we have 
            \begin{equation*}
                \tau \frac{m}{m-1} \rho^{m-1} = -\psi
            \end{equation*}
        \end{enumerate}
    \end{theorem}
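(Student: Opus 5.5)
The plan is to derive the optimality conditions by first-order perturbations of the minimizer $\rho$ within $\cl{P}(\Omega)$, following the standard scheme of \cite[Section 7.2]{OT_Filippo}. The cases $m \geq 1$ are classical and I only sketch them; the new work is for $m<1$.

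Since $\Omega$ is bounded, every probability measure lies in $\cl{P}_2(\Omega)$. For any competitor $\tilde\rho$ with $\scr{E}_m[\tilde\rho]<+\infty$, I would set $\rho_\epsilon=(1-\epsilon)\rho+\epsilon\tilde\rho$. The first variation of $\frac{1}{2}W_2^2(\cdot,\mu)$ at $\rho$ in the direction $\tilde\rho-\rho$ is $\int\psi\,d(\tilde\rho-\rho)$, where $\psi$ is a Kantorovich potential from $\rho$ to $\mu$. This follows from the duality of Theorem \ref{thm: Kantorovich_duality} and a compactness argument on potentials. The argument uses uniform Lipschitz and semiconcavity bounds on the bounded domain: potentials of $\rho_\epsilon$ converge uniformly along subsequences to a potential for $\rho$. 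Uniqueness of the potential is not needed, as in \cite[Proposition 7.17]{OT_Filippo}, once we select a limit.

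For the entropy part, write $g=\rho^{ac}$. By Proposition \ref{prop: positivity_integrability}, $g>0$ a.e. and $f_m'(g)\in L^1_{loc}$. The monotone-quotient argument already used there, with Fatou on one side and domination on the other, gives
\[
\int_\Omega f_m'(g)(\tilde g - g)\,d\cl{L}^d+\frac{1}{\tau}\int\psi\,d(\tilde\rho-\rho)\ \geq\ 0 .
\]
Here $\tilde g$ is the a.c. part of $\tilde\rho$, and we take $\tilde\rho$ a.c. with bounded density. Letting $\tilde\rho$ range over such densities shows that $\ell := f_m'(g)+\psi/\tau$ satisfies $\ell\geq \int \ell\, dg$ essentially. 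In other words, $\ell = C$ a.e. on $\{g>0\}$, which is all of $\Omega$ (for $m>1$ one instead gets $\ell\geq C$ on $\{g=0\}$). Adjusting $\psi$ by the constant, which keeps $(\psi - C\tau,\phi + C\tau)$ a Kantorovich pair, yields $\tau f_m'(\rho)=-\psi$. This is the claim for $m=1$ and for $m<1$ on the a.c. part. For $m>1$ it gives $\tau f_m' (\rho) = [-\psi]_+$, and convexity of $\max(\frac12|x|^2-\psi,\frac12|x|^2)$ follows because $\frac12|x|^2-\psi$ is convex by $c$-concavity.

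The main obstacle is $m<1$: showing $\rho^\perp=0$ and $\psi<0$. From $\tau\frac{m}{m-1}g^{m-1}=-\psi$ we get $\psi<0$ a.e. Since $\psi$ is continuous (semiconcave, Lipschitz on bounded $\Omega$), this gives $\psi\leq 0$ everywhere. Following Khanh–Santambrogio, I would then perturb by moving a small mass $\epsilon$ of the singular part, uniformly onto a small ball $B_r(x_0)$ near a point $x_0\in\operatorname{supp}\rho^\perp$. The entropy gain is of order $\frac{1}{m-1}\epsilon^m r^{d(1-m)}$, which is strongly negative, like $-\epsilon^m$. The transport cost changes only by $O(\epsilon)$ via the first variation $\frac1\tau\int\psi\,d(\tilde\rho-\rho)$, because $\psi$ is Lipschitz. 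Since $\epsilon^m\gg\epsilon$, this contradicts minimality, so $\rho^\perp=0$.

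Finally, if $\psi(x_0)=0$ at some point, then near $x_0$ we have $-\psi\leq L|x-x_0|$. This forces $g = (\frac{m-1}{m\tau}\cdot(-\psi))^{1/(m-1)}\gtrsim |x-x_0|^{-1/(1-m)}$. Since $m>m_c^1$ gives $\frac{1}{1-m}>\frac d2$, and $d \leq$ this exponent may or may not be integrable, I would instead refine using the semiconcavity bound $-\psi\leq L|x-x_0|$ together with $\psi\le 0$. At a maximum point of $\psi$, we get $\nabla\psi(x_0)=0$ where differentiable, so $-\psi\le \frac12|x-x_0|^2$. Then $g\gtrsim|x-x_0|^{-2/(1-m)}$, which is non-integrable exactly when $\frac{2}{1-m}\geq d$, i.e. $m\geq m_c^1$. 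This contradicts $\rho\in\cl{P}$, so $\psi<0$. I expect this last interplay, including the boundary maximum case handled via a one-sided version of the quadratic bound, to be the delicate point.
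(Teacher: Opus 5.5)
The cases $m\ge1$ are fine in outline. The new case $m<1$, however, has two genuine gaps.

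First, the argument for $\rho^\perp=0$ fails. A gain of order $\epsilon^m$ requires the added mass to land where the absolutely continuous density vanishes. But you have already invoked Proposition \ref{prop: positivity_integrability}: $g>0$ a.e.\ and $f_m'(g)\in L^1_{loc}$. By convexity, $f_m(g+h)-f_m(g)\ge f_m'(g)\,h$. So spreading mass $\epsilon$ uniformly on a fixed ball $B\subset\Omega$ changes the entropy by at least $\epsilon\fint_B f_m'(g)$, i.e.\ only at first order. The quantity $-\frac{1}{1-m}\epsilon^m|B|^{1-m}$ is merely the largest conceivable decrease (by subadditivity of $t\mapsto t^m$), attained only where $g=0$. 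Letting $B$ shrink with $\epsilon$ would require $g$ to be small on a large part of $B$, which nothing guarantees. Entropy and transport therefore both move at order $\epsilon$, and the outcome is decided by comparing constants.

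That comparison is the missing idea, which the paper supplies in Steps 2--4:
\begin{enumerate}
\item Perturbing with $\rho^\perp$ frozen (competitors $\rho^\perp+\tilde g$ with $\int\tilde g=\int g$) gives $\tau f_m'(g)+\psi=C$ a.e.
\item Moving only singular mass shows that $\rho^\perp$ is concentrated on $\{\psi=\min\psi\}$.
\item Trading mass proportionally, $\chi=a\rho^{ac}+b\rho^\perp$, gives $C=\min\psi$ whenever $\rho^\perp\neq0$.
\item At a minimum point $x_0$ the density is then $\gtrsim|x-x_0|^{-2/(1-m)}$, which is not integrable on a cone at $x_0$ because $m>m_c^1$.
\end{enumerate}
Your derivation of $\ell=C$ also already presupposes $\rho^\perp=0$. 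An absolutely continuous competitor deletes $\rho^\perp$ as well, producing the extra term $-\frac1\tau\int\psi\,d\rho^\perp$, and then the inequality no longer forces $\ell$ to be constant. It also needs one $\psi$ for all directions. Compactness only yields a potential that depends on the direction; it is uniqueness up to constants (from $g>0$ a.e.) that removes this dependence.

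Second, the last step rests on a sign error. For $m<1$ the left-hand side $\tau\frac{m}{m-1}g^{m-1}$ is negative, so the relation gives $\psi>0$ a.e., not $\psi<0$. The `$\psi<0$' in the statement must be read as $-\psi<0$, consistently with $q\le0$ in Section \ref{section: one_step_improvement}. The density is therefore large near minima of $\psi$, not maxima, and your argument at a maximum is invalid:
\begin{itemize}
\item $1$-semiconcavity bounds $\psi$ only from above, so it cannot give $\psi(x)\ge\psi(x_0)-\frac12|x-x_0|^2$ near a maximum.
\item A semiconcave function need not be differentiable at a maximum (think of $-|x|$).
\item That leaves only the Lipschitz bound, which, as you noticed, is insufficient when $m<1-\frac1d$.
\end{itemize}
At a minimum point, by contrast, $\psi(x)\le\psi(x_0)+\frac12|x-x_0|^2$ holds on all of $\Omega$, boundary minima included. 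This is Lemma \ref{lemma: quadratic_deviation_min}, proved by showing $x_0\in\partial u(x_0)$ for the Brenier potential $u=\frac12|x|^2-\psi$. With this correction your final step becomes the paper's Step 4. It also yields the strict sign, since $\min\psi=C$ would produce the same non-integrable blow-up.
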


    \begin{proof}
        The only new result is the last case. We divide into several steps:
        \begin{itemize}
            \item \textbf{Step 1 - Directional derivative inequality:} We say that a measure $\chi \in \cl{P}_2(\Omega)$ is admissible if $f_m'(\rho^{ac}) \chi^{ac} \in L^1(\Omega)$ and $\scr{E}_m[\chi] < +\infty$. Note that if $\chi^{ac}$ is bounded, then $\chi$ is admissible by Proposition \ref{prop: positivity_integrability}. Fix such a $\chi$, and let $\rho_\epsilon := \epsilon \chi + (1-\epsilon) \rho$. Then by dominated convergence theorem, $\scr{E}_m[\rho_\epsilon]$ is differentiable at $0$ with
            \begin{equation*}
                \dv{\epsilon}_{|\epsilon = 0} \scr{E}_m[\rho_\epsilon] = \int_\Omega f'_m(\rho^{ac})(\chi^{ac}-\rho^{ac}) \dd{\cl{L}^d} 
            \end{equation*}
            On the other hand, since $\rho$ is supported on $\Omega$, then by \cite[Proposition 7.17-7.18]{OT_Filippo}, the Kantorovich potentials $(\psi,\phi)$ from $\rho$ to $\mu$ are unique up to translation, and $W_2^2(\rho_\epsilon,\mu)$ is differentiable at $0$ with
            \begin{equation*}
                \dv{\epsilon}_{|\epsilon = 0} \frac{1}{2} W_2^2(\rho_\epsilon,\mu) = \int_\Omega \psi \dd{[\chi-\rho]}
            \end{equation*}
            Since, by optimality of $\rho$ we have $ \dv{\epsilon}_{|\epsilon = 0} \tau \scr{E}_m[\rho_\epsilon] + \frac{1}{2} W_2^2(\rho_\epsilon,\mu) \geq 0$, and dividing the derivative of the Wasserstein distance into the absolutely continuous and singular part, we deduce that
            \begin{equation} \label{eq: integral_optimality_conditions}
                \int_\Omega (\tau f_m'(\rho^{ac}) + \psi)(\rho^{ac} - \chi^{ac}) \dd{\cl{L}^d} + \int_\Omega \psi \dd{[\chi^\perp - \rho^\perp]} \geq 0
            \end{equation}

            \item \textbf{Step 2 - Pointwise optimality condition:} Define $C := \essinf(\tau f'_m(\rho^{ac}) + \psi)$ and $C' := \inf \psi$, well defined by continuity of $\psi$. If we take $\chi^\perp = \rho^\perp$ in \ref{eq: integral_optimality_conditions}, and using the argument of \cite[Proposition 7.20]{OT_Filippo}, we get that $\tau f'_m(\rho^{ac}) + \psi = C$ a.e. on $\Omega$. On the other hand, if we let $\chi^{ac} = \rho^{ac}$, then taking $\chi^\perp$ concentrated on the set $\{ \psi = C' \}$ shows that $\rho^\perp$ is concentrated on this set. Therefore we have
            \begin{equation*}
                \begin{cases}
                    \tau f'_m(\rho^{ac}) + \psi = C & \mbox{a.e.} \\
                    \psi = C' & \mbox{$\rho^\perp$ a.e.}
                \end{cases}
            \end{equation*}

            \item \textbf{Step 3 - Equality of constants under existence of singular part:} Suppose $\rho^\perp \neq 0$, let $t := \rho^{ac}(\Omega)$ and $s := \rho^\perp(\Omega)$, so that $t+s = 1$. Let $a,b \geq 0$ such that $a t + b s = 1$, then the measure $\chi = a \rho^{ac} \cdot \cl{L}^d + b \rho^\perp$ is admissible. Putting this measure into equation \ref{eq: ineq_optimal_point} we get $C t (a-1) + C' s (b-1) \geq 0$. As $s > 0$, solving for $b$ gives $b = \frac{1-at}{s}$ provided that $0 \leq a \leq 1/t$. Replacing $b$ by this value, and $s$ by $1-s$ we obtain
            \begin{equation*}
                C t (a-1) + C' s (b-1) = (a-1) t (C-C') \geq 0
            \end{equation*}
            Since $1/t > 1$, $a-1$ can take both positive and negative value, therefore we must have $C=C'$. In particular, if $\rho^\perp$ is non-zero, the conditions become
            \begin{equation*}
                \begin{cases}
                    \tau f'_m(\rho^{ac}) + \psi = C & \mbox{a.e.} \\
                    \psi = C & \mbox{$\rho^\perp$ a.e.}
                \end{cases}
            \end{equation*}
            
            \item \textbf{Step 4 - Absolute continuity of $\rho$:} Suppose that $\rho^\perp \neq 0$, then $C = C'$. Let $x_0$ be a minimum for $\psi$, then by Lemma \ref{lemma: quadratic_deviation_min} below, we have
            \begin{equation*}
                \tau f_m'(\rho^{ac}(x)) = C - \psi(x) \geq -\frac{1}{2} |x-x_0|^2
            \end{equation*}
            Hence
            \begin{equation*}
                \rho^{ac}(x) \geq \left ( \frac{|m-1|}{\tau m} \right )^\frac{1}{m-1} |x-x_0|^\frac{2}{m-1}
            \end{equation*}
            As convex domains satisfy interior cone condition, we can find a cone 
            \begin{equation*}
                C(\nu,\theta,h) = \{ t v, t \leq h, v \in \bb{S}^d, |v \cdot \nu| \leq \theta \}
            \end{equation*}
            such that $x + C(\nu,\theta,h) \subset \Omega$. Indeed, consider a ball $\overline{B}_F(x_0,r) \subset \Omega$, then the set $\rm{conv}(x,\overline{B}_F(x_0,r))$ contains such a set.
            Integrating the previous inequality over this set we have:
            \begin{align*}
                1 \geq \int_{x_0 + C(\nu,\theta,h)} \rho^{ac} \dd{\cl{L}^d} \geq \left ( \frac{|m-1|}{\tau m} \right )^\frac{1}{m-1} \int_{C(\nu,\theta,h)} |x|^\frac{2}{m-1} \dd{x} \\
                = \left ( \frac{|m-1|}{\tau m} \right )^\frac{1}{m-1} \cl{H}^{d-1}(\Sigma_{\beta,\nu}) \int_0^h r^{d-1 + \frac{2}{m-1}} \dd{r} 
            \end{align*}
            where $\Sigma_{\nu,\beta} := \{ v \in \bb{S}^d, |v \cdot \theta| \leq \theta \}$. The last term being infinite as $m > m_c^1$, we get a contradiction. Therefore $\rho$ is absolutely continuous, concluding the proof up to replacing $\psi$ by $\psi - C$. \qedhere
        \end{itemize}
    \end{proof}

    \begin{lemma}[Quadratic deviation from minimum] \label{lemma: quadratic_deviation_min}
        Let $\Omega$ be bounded convex, let $(\psi,\phi)$ be a pair of Kantorovitch potentials between two measures $\rho,\eta$ on $\Omega$. Let $x_0 \in \Omega$ be a minimum point of $\psi$ (which exists by continuity), then for all $x \in \Omega$ we have
        \begin{equation}
            \psi(x) \leq \psi(x_0) + \frac{1}{2} |x-x_0|^2
        \end{equation}
    \end{lemma}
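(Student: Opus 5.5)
The plan is to work with the convex Brenier potential $v := \frac{1}{2}|\cdot|^2 - \phi$, where $\phi = \psi^c$. This potential is finite and convex on all of $\mathbb{R}^d$, as an infimum of $\frac12|x-y|^2 - \psi(y)$ over $y\in\Omega$ with $\Omega$ bounded. The key observation is that $v$ is a supremum of affine functions of $x$:
\begin{equation*}
    v(x) = \frac{1}{2}|x|^2 - \phi(x) = \sup_{y \in \Omega} \left\{ x\cdot y - \frac{1}{2}|y|^2 + \psi(y) \right\}.
\end{equation*}

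Fix $x \in \Omega$ and let $x_0$ be a minimum point of $\psi$.

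\textbf{Step 1 (upper bound on $\phi(x_0)$).} Choose $y = x_0$ in the definition of $\phi$ to get
\begin{equation*}
    \phi(x_0) \leq \frac{1}{2}|x_0 - x_0|^2 - \psi(x_0) = -\psi(x_0).
\end{equation*}

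\textbf{Step 2 (lower bound on $\phi$ everywhere).} For every $z$ and every $y \in \Omega$,
\begin{equation*}
    \frac12|z-y|^2 - \psi(y) \geq -\psi(y) \geq -\max\psi.
\end{equation*}
This bound is not the one we need, so we proceed instead through $c$-concavity. Since $\psi = \phi^c$, we have
\begin{equation*}
    \psi(x) = \inf_{y\in\Omega} \left\{ \frac12|x-y|^2 - \phi(y) \right\}.
\end{equation*}
The task is therefore to find a suitable $y$ that makes the right-hand side at most $\psi(x_0) + \frac12|x-x_0|^2$.

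The natural candidate is $y = y_0$, an optimal point for $x_0$ in this infimum, i.e.
\begin{equation*}
    \psi(x_0) = \frac12|x_0-y_0|^2 - \phi(y_0).
\end{equation*}
Such a $y_0$ exists by compactness and continuity. Plugging $y_0$ into the infimum for $\psi(x)$ gives
\begin{equation*}
    \psi(x) \leq \frac12|x-y_0|^2 - \phi(y_0) = \psi(x_0) + \frac12|x-y_0|^2 - \frac12|x_0-y_0|^2.
\end{equation*}

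It remains to compare $|x-y_0|^2 - |x_0-y_0|^2$ with $|x-x_0|^2$. Expanding,
\begin{equation*}
    |x-y_0|^2 - |x_0-y_0|^2 = |x-x_0|^2 + 2(x-x_0)\cdot(x_0-y_0).
\end{equation*}
So the claim reduces to showing
\begin{equation*}
    (x-x_0)\cdot(x_0-y_0) \leq 0 \quad \text{for all } x \in \Omega.
\end{equation*}
This is where minimality of $x_0$ enters. The function $\psi$ is semi-concave, and $x_0 - y_0$ is a supergradient of $\psi$ at $x_0$, because $\psi \leq \frac12|\cdot - y_0|^2 - \phi(y_0)$ with equality at $x_0$. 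Since $x_0$ minimizes $\psi$ over the convex set $\Omega$, every one-sided directional derivative $\psi'(x_0; x-x_0)$ is $\geq 0$.

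The main obstacle is converting this into the inequality on a supergradient. Concretely, we would use
\begin{equation*}
    \psi(x_0 + t(x-x_0)) \leq \psi(x_0) + t(x_0-y_0)\cdot(x-x_0) + \frac{t^2}{2}|x-x_0|^2.
\end{equation*}
Combining this with $\psi(x_0+t(x-x_0)) \geq \psi(x_0)$, which holds by convexity of $\Omega$ and minimality, gives after dividing by $t$ and letting $t\to0$ that $(x_0-y_0)\cdot(x-x_0) \geq 0$. This is the wrong sign for the argument above.

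The fix is to avoid choosing $y_0$ and instead combine the two inequalities directly. From the last display with $t=1$ we get
\begin{equation*}
    \psi(x) \leq \psi(x_0) + (x_0-y_0)\cdot(x-x_0) + \frac12|x-x_0|^2,
\end{equation*}
so the obstruction is exactly the linear term. It can be removed if $y_0 = x_0$, i.e. if the transport does not move the minimum point. Alternatively, we can use that at an interior minimum the supergradient must be zero, since semiconcave functions are differentiable at minima: a nonzero supergradient would contradict minimality in the opposite direction. For boundary minima we would use the convexity of $\Omega$ together with $y_0\in\Omega$. I expect this boundary case to be the delicate point, and I would first try to show that $y_0 = x_0$ is achievable by perturbing the choice within the superdifferential, using that $0$ lies in the superdifferential whenever $x_0$ is a local minimum of a semiconcave function.
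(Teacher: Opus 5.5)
\textbf{There is a genuine gap, but it closes with one line.} Your reduction is right. With $y_0\in\Omega$ attaining $\psi(x_0)=\frac12|x_0-y_0|^2-\phi(y_0)$, you have, for every $x$,
\[
\psi(x)\le \psi(x_0)+(x_0-y_0)\cdot(x-x_0)+\tfrac12|x-x_0|^2 ,
\]
so everything reduces to showing $y_0=x_0$. Your interior-point argument (zero supergradient at an interior minimum) is fine. For a boundary minimum, however, you give only a plan, and that plan rests on a false claim. It is not true that $0$ lies in the superdifferential at a minimum of a semiconcave function when the minimum is on the boundary. For instance, $\psi(x)=x$ on $[0,1]$ is semiconcave and minimal at $0$. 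Its superdifferential there does not contain $0$, and it violates the lemma at $x=1$, since $1>\frac12$. So semiconcavity plus minimality over $\Omega$ cannot suffice. You must use that $\psi=\phi^c$ with the infimum taken over $y\in\Omega$, so that the contact point $y_0$ itself lies in $\Omega$. Perturbing within the superdifferential does not use this.

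\textbf{The missing step is to test minimality at the admissible point $x=y_0$.} Your own first-order inequality $(x_0-y_0)\cdot(x-x_0)\ge 0$ for all $x\in\Omega$, the one you set aside as having the wrong sign, gives $-|x_0-y_0|^2\ge 0$ at $x=y_0$. Alternatively, evaluate the display above at $x=y_0$ and combine it with $\psi(y_0)\ge\psi(x_0)$ to get $\psi(x_0)\le\psi(x_0)-\frac12|y_0-x_0|^2$. Either way $y_0=x_0$, with no interior/boundary case distinction, and the display then gives the lemma. This is exactly the paper's proof. There it is phrased with $u=\frac12|\cdot|^2-\psi$ and a subgradient $p\in\partial u(x_0)\cap\Omega$, which plays the role of your $y_0$, and the paper plugs in $x=p$. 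Your Steps 1 and 2 and the detour through $v$ are not needed.
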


    \begin{proof}
        Define $u(x) := \frac{1}{2} |x|^2 - \psi$. By Kantorovich duality, we have $u(x) = \max_{y \in \Omega} x \cdot y - u^*(y)$, hence $\partial u[x_0]$ is non-empty and contains a point in $\Omega$. Let $p$ be such a point. By the sub-differential inequality we have
            \begin{equation*}
                u(x) \geq u(x_0) + p \cdot (x-x_0)
            \end{equation*}
            But as $u(x) \leq \frac{1}{2} |x|^2 - \psi(x_0) = \frac{1}{2} |x|^2 - \frac{1}{2} |x_0|^2 + u(x_0)$ we get 
            \begin{equation*}
                \frac{1}{2} |x|^2 - \frac{1}{2} |x_0|^2 + p \cdot (x_0 - x) = \frac{1}{2} (2 p - x_0 - x) \cdot (x_0-x) \geq 0 
            \end{equation*}
            for all $x \in \Omega$. In particular, if we take $x = p$, we get $-|p-x_0|^2 \geq 0$, therefore $p = x_0$. We deduce that
            \begin{align*}
                \psi(x) &= \frac{1}{2} |x|^2 - u(x) \leq \frac{1}{2} |x|^2 - u(x_0) - x_0 \cdot (x-x_0) \\
                &= \psi(x_0) + \frac{1}{2} |x|^2 - \frac{1}{2} |x_0|^2 - x_0 \cdot (x-x_0) = \psi(x_0) + \frac{1}{2} |x-x_0|^2
            \end{align*}
            Therefore $\psi$ deviates at most quadratically from its minimum. 
    \end{proof}

    As a first consequence of the optimality conditions, and by approximation, one can derive that there exists at least an optimizers are always absolutely continuous, even in the unbounded case.

    \begin{corollary}[Absolute continuity in unbounded domains] \label{coro: absolute_continuity_unbounded}
        Suppose $\Omega$ is a volume-regular domain, $m_c^2 < m < 1$. Then there exists a least one absolutely continuous minimizer for the one-step JKO problem.
    \end{corollary}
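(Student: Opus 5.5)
We will obtain the absolutely continuous minimizer by approximating $\Omega$ with bounded domains and using Theorem \ref{thm: optimality_conditions} on each of them.

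\textbf{Approximating problems.} Set $\Omega_R := \Omega \cap B_R(0)$. Each $\Omega_R$ is bounded and convex. The measure $\mu$ has mass on all of $\Omega$, so we replace it by its normalized restriction $\mu_R := \mu(\Omega_R)^{-1}\mu \resmes \Omega_R$. Then $\mu_R \to \mu$ in $\bb{W}_2$, since $\mu$ has finite second moment. Let $\rho_R$ be the minimizer of the one-step problem on $\Omega_R$ starting from $\mu_R$, given by Proposition \ref{prop: existence_one_step}. By Theorem \ref{thm: optimality_conditions}(3), $\rho_R$ is absolutely continuous and satisfies $\tau\frac{m}{m-1}\rho_R^{m-1} = -\psi_R$ with $\psi_R<0$.

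\textbf{Compactness and limit.} Testing the problem on $\Omega_R$ with a fixed competitor, for instance the profile $\xi$ from Proposition \ref{prop: positivity_integrability} cut off to $\Omega_R$ and normalized, gives a bound on the minimal values that is uniform in $R$. Combined with the coercivity estimate \eqref{eq: lower_bound_JKO}, this bounds $M_2[\rho_R]$ uniformly, since $m>m_c^2$. Extracting, $\rho_R\to\rho$ narrowly. By Proposition \ref{prop: m_entropy_lsc}(3) and lower semicontinuity of $W_2$, $\rho$ minimizes on $\Omega$: for any competitor $\eta$ on $\Omega$, its normalized restrictions $\eta_R$ satisfy $\scr{E}_m[\eta_R]\to\scr{E}_m[\eta]$ and $W_2(\eta_R,\mu_R)\to W_2(\eta,\mu)$, which gives the required $\Gamma$-limsup.

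\textbf{Main obstacle: ruling out a singular part of $\rho$.} The singular part of $\scr{E}_m$ is not penalized, so absolute continuity does not pass to the limit automatically. We will use the quantitative argument of Step 4 instead. By Lemma \ref{lemma: quadratic_deviation_min}, with $x_R$ a minimum point of $\psi_R$,
\begin{equation*}
\rho_R(x)\gtrsim \left(-\psi_R(x_R)+\tfrac12|x-x_R|^2\right)^{\frac{1}{m-1}} .
\end{equation*}
Suppose $\rho$ charges a point $x_*$. We would then show that concentration of $\rho_R$ near $x_*$ forces $\min\psi_R = -\tau\frac{m}{m-1}\max\rho_R^{m-1}$ to tend to $0$ with $x_R\to x_*$. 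Indeed, if $\rho_R\le M$ on a region, then $\psi_R\le -c(M)$ there, and the $1$-semiconcave, Lipschitz-on-bounded-sets structure of $\psi_R$ propagates this to the whole region.

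Granting this, the lower bound tends to $\gtrsim |x-x_*|^{2/(m-1)}$ on $\Omega\cap B_r(x_*)$. Volume-regularity, $|\Omega\cap B_r(x)|\ge Vr^d$, replaces the cone condition uniformly in $R$. Summing over dyadic annuli, the integral of $|x-x_*|^{2/(m-1)}$ diverges because $d+\frac{2}{m-1}<0$, that is $m>m_c^1$. This contradicts unit mass for $R$ large.

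The delicate point is the middle step: a singular limit must force $\inf\psi_R\to0$ near $x_*$. To get it, we will use the $c$-concave representation of $\psi_R$ to compare it with its minimum value on a ball of fixed radius, uniformly in $R$. If this fails, the fallback is to show directly that $\rho$ satisfies the pointwise identity of Theorem \ref{thm: optimality_conditions} by passing to the limit in the potentials $\psi_R$, which are locally uniformly semiconcave. We would then reproduce Steps 3--4 on $\Omega$, with volume-regularity replacing boundedness.
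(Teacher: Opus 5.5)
Your approximation by $\Omega_R$ and the $\Gamma$-convergence step match the paper's Proposition \ref{prop: stability_JKO} and are fine. There is, however, a genuine gap, located at the step you yourself call delicate. First, you set up the contradiction at an atom $x_*$ of $\rho$. A nonzero singular part need not have atoms: it may be carried by a curve or a Cantor-type set. So even a completed argument at $x_*$ would not rule out $\rho^\perp\neq 0$. Second, and more seriously, the implication ``concentration near $x_*$ forces $\min\psi_R\to 0$ with $x_R\to x_*$'' is only announced (``we would then show'', ``granting this''). The semiconcavity sketch does not deliver the localisation $x_R\to x_*$, and there is no reason for the global maximiser of $\rho_R$ to track a given atom (think of two atoms). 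Your fallback is not cheaper either. Redoing Steps 3--4 on $\Omega$ requires first-order optimality conditions on an unbounded domain. The proof of Theorem \ref{thm: optimality_conditions} uses boundedness for this (uniqueness of potentials, differentiability of $W_2^2(\cdot,\mu)$, existence of $\min\psi$), which is exactly what the approximation is meant to bypass.

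The missing idea is that no localisation is needed. Lemma \ref{lemma: quadratic_deviation_min}, applied at the maximum point $x_R$ of $\rho_R$ itself, already bounds $\max\rho_R$ uniformly in $R$. Set $p_R:=\tau f_m'(\rho_R)=-\psi_R$; the lemma gives $p_R(x_R)\le p_R(x)+\frac12|x-x_R|^2$ on $\Omega_R$. Average this over $B_r(x_R)\cap\Omega_R$ and apply Jensen, using three facts: $f_m'$ is increasing and concave for $m<1$; $\rho_R(B_r(x_R)\cap\Omega_R)\le 1$; and $|B_r(x_R)\cap\Omega_R|\ge V'r^d$ with $V'$ independent of $R$ (volume-regularity of $\Omega_R$). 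This gives
\[
\tau f_m'\bigl(\max\rho_R\bigr)\le -\tau\frac{m}{1-m}(V')^{1-m}r^{d(1-m)}+\frac{r^2}{2}.
\]
The right-hand side is negative for $r$ small, depending only on $m,\tau,V$, because $d(1-m)<2$ is exactly $m>m_c^1$. Hence $\|\rho_R\|_{L^\infty}\le M(m,\tau,V)$. The narrow limit then satisfies $\rho(U)\le\liminf_R\rho_R(U)\le M|U|$ for every open $U$, so $\rho\ll\mathcal{L}^d$ with bounded density, whatever the structure of a putative singular part. This is the paper's proof. Your ingredients are the right ones: the quadratic-deviation lemma, volume-regularity in place of the cone condition, and divergence of the integral at the threshold $m_c^1$. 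Use them to prove an a priori $L^\infty$ bound rather than a contradiction localised at $x_*$. Equivalently, if $\rho^\perp\neq0$ then $\sup_R\|\rho_R\|_\infty=\infty$, and then the mass of $\rho_R$ near $x_R$ blows up wherever $x_R$ lies.
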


    \begin{proof}
        In the bounded case, this follows immediately from the optimality conditions. We now consider $\Omega$ unbounded volume-regular domain (with constant $V$), and set $\Omega_N := \Omega \cap B_N(0)$, which is, for $N$ large enough, volume-regular with constant $\frac{1}{2} V$. We approximate $\mu$ by a sequence $\mu_N \in \cl{P}_2(\Omega_N)$ converging in $\bb{W}_2(\Omega)$ to $\mu$, and we let $(\rho_N)_N$ be the corresponding sequence of minimizers. By Proposition \ref{prop: stability_JKO}, up to subsequence, $\rho_N \to \rho$ in $\bb{W}_2(\Omega)$ where $\rho$ is a minimizer of the one-step JKO scheme starting from $\mu$. 

        By the optimality conditions, for each $N$, there exists a Kantorovitch potential $\psi_N$ such that $\tau \frac{m}{m-1} \rho_N^{m-1} = -\psi_N$. We argue that this implies uniform $L^\infty$-bound on $\rho_N$. Indeed, consider $x_0$ maximum point of $\rho_N$, or equivalently, minimum point of $\psi_N$. By Lemma \ref{lemma: quadratic_deviation_min}, for all $x \in \Omega_N$, $\tau \frac{m}{m-1} \rho_N^{m-1}(x_0) \leq \tau \frac{m}{m-1} \rho_N^{m-1}(x) - \frac{1}{2} |x-x_0|^2$. Integrating over $B(x_0,r) \cap \Omega_N$ for $r \leq \rm{diam}(\Omega_N)$ and using monotony and convexity of $-t^{m-1}$, as in the proof of Lemma \ref{lemma: L1-Linfty_regularization}, we obtain
        \begin{align*}
            \tau \frac{m}{m-1} ||\rho_N||_\infty^{m-1} &\leq \tau \frac{m}{m-1} \frac{1}{|\Omega_N \cap B(x_0,r)|^{m-1}} - c_d \frac{r^{d+2}}{|\Omega_N \cap B(x_0,r)|} \\
            &\leq c_1(m,\tau,V) r^{d(1-m)} - c_2(m,V) r^2
        \end{align*}
        taking $r$ small enough, depending only on $m,\tau,V$, the right-hand-side is negative, and we can inverse the relation to obtain $||\rho_N||_{L^\infty(\Omega_N)} \leq M(\tau,m,V)$ uniformly in $N$. Combining this with the $\bb{W}_2(\Omega)$-convergence implies that the limit $\rho$ is absolutely continuous (and even $L^\infty(\Omega)$). 
    \end{proof}

    Finally, we have the following propagation of upper and lower bound. It follows either by a maximum principle type argument, in the spirit of \cite[Proposition 7.32]{OT_Filippo}, or by the comparison principle, either using the $L^1$-contraction principle obtained by Jacobs, Kin, and Tong in \cite{L1_contraction} in the super-linear case, or by the general theory developed by L\'eger and Sylvestre in \cite{Comparison_Maxime}, which can be adapted to the non super-linear case once uniqueness is ensured. 

    \begin{proposition}[Proposition of upper and lower bound] \label{prop: propagation_upper_lower_bound}
        Suppose that $\epsilon \leq \mu \leq \epsilon^{-1}$, then the same holds for $Q_m^\tau[\mu]$.  
    \end{proposition}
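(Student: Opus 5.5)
Let $\rho := Q_m^\tau[\mu]$. The plan is a maximum-principle argument on the optimality condition of Theorem \ref{thm: optimality_conditions}. It is carried out in the bounded case and then extended by approximation. I treat the lower bound $\rho \geq \epsilon$ and the upper bound $\rho \leq \epsilon^{-1}$ symmetrically.

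By Theorem \ref{thm: optimality_conditions}, there is a Kantorovich potential $\psi$ from $\rho$ to $\mu$ with $\tau f_m'(\rho) = -\psi$. For $m>1$ the relation reads $\tau f_m'(\rho) = [-\psi]_+$, and there we argue on the set $\{\rho>0\}$. For $m<1$, absolute continuity of $\rho$ is part of the theorem. Since $f_m'$ is increasing, the maximum of $\rho$ corresponds to the minimum of $\psi$, and the minimum of $\rho$ to the maximum of $\psi$.

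\textbf{Upper bound.} Let $x_0$ be a minimum point of $\psi$; it exists because $\psi$ is continuous on bounded $\Omega$. By the proof of Lemma \ref{lemma: quadratic_deviation_min}, $\partial u(x_0) = \{x_0\}$ with $u = \frac12|\cdot|^2-\psi$, so the transport map fixes $x_0$. We want to compare densities near $x_0$. The cleanest route is to regularize first: assume $\mu$ is Hölder and bounded between $\epsilon$ and $\epsilon^{-1}$, and first obtain crude two-sided bounds on $\rho$. These come from the Inada remark for $m\le1$, and from the $L^\infty$ argument of Corollary \ref{coro: absolute_continuity_unbounded} together with positivity. Theorem \ref{thm: Caff_regu} then makes $\psi$ of class $C^{2,\alpha}$ in the torus or cube case, and the Monge–Ampère equation gives
\begin{equation*}
\rho(x_0) = \mu(T(x_0))\det(I - D^2\psi(x_0)) = \mu(x_0)\det(I-D^2\psi(x_0)).
\end{equation*}
At an interior minimum $D^2\psi(x_0)\succeq 0$, and $I-D^2\psi\succeq0$ by convexity of $u$. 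Hence $\det(I-D^2\psi(x_0))\le 1$, so $\max\rho = \rho(x_0)\le \mu(x_0)\le\epsilon^{-1}$.

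\textbf{Lower bound.} At a maximum point $x_1$ of $\psi$, Lemma \ref{lemma: quadratic_deviation_min} applied to $-\psi$ does not apply directly. Instead we use the reverse map: at a maximum of $\psi$ we have $\nabla\psi(x_1)=0$ (in the interior), so $T(x_1)=x_1$. Moreover $D^2\psi(x_1)\preceq0$ gives $\det(I-D^2\psi)\ge1$, hence $\rho(x_1)\ge\mu(x_1)\ge\epsilon$.

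\textbf{Boundary extremal points.} These are the main obstacle. In the torus there is no boundary. In the cube, Caffarelli/Jhaveri regularity makes $T$ map each face into itself, so $\partial_\nu\psi=0$ on faces. This allows a reflection argument: extend $\rho,\mu,\psi$ evenly across the faces, which preserves the Monge–Ampère structure. An extremum on a face then becomes an interior extremum of the reflected problem, and the Hessian sign argument goes through. For general bounded convex domains I would instead use the weaker route via the comparison principle of \cite{L1_contraction}, \cite{Comparison_Maxime}. Its content is that constant densities are fixed points of the scheme, so comparison with $\mu\le\epsilon^{-1}$ gives $\rho\le\epsilon^{-1}$, and similarly from below. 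Once uniqueness (Proposition \ref{prop: existence_one_step}) is known, this needs only monotonicity of $Q_m^\tau$ and mass conservation.

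\textbf{Removing the regularity assumptions.} Approximate $\mu$ by smooth $\mu_n$ that still satisfy $\epsilon\le\mu_n\le\epsilon^{-1}$, obtained by convolution with reflection. Then pass to the limit using stability of minimizers, Proposition \ref{prop: stability_JKO}. Two-sided $L^\infty$ bounds are preserved under weak convergence, which gives the result for $Q_m^\tau[\mu]$.
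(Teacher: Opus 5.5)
Your plan follows the paper's own sketch: a maximum principle on the Monge--Amp\`ere equation at extremal points of $\psi$, in the spirit of Santambrogio's Proposition 7.32, with the comparison principle as the alternative, followed by approximation.

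\textbf{The case $m\le 1$.} On the torus and the cube your argument works as written. The crude bounds come from $\tau f_m'(\rho)=-\psi$ with $\psi$ continuous on a compact set. Theorem \ref{thm: Caff_regu} then gives $\psi\in C^{2,\alpha}$. The sign of $D^2\psi$ at an extremum yields $\det(I-D^2\psi)\le 1$, respectively $\ge 1$. The even reflection across faces is legitimate because $T$ preserves faces, so $\partial_n\psi=0$ there, and it turns boundary extrema into interior ones.

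\textbf{The gap: $m>1$.} Here the optimality condition is only $\tau f_m'(\rho)=[-\psi]_+$, so $\rho$ vanishes on $\{\psi\ge 0\}$. Neither Proposition \ref{prop: positivity_integrability} nor a lower Inada bound is available, since $f_m'(0)=0$. So you have no a priori lower bound on $\rho$, and Theorem \ref{thm: Caff_regu} cannot be invoked. Your list of ``crude two-sided bounds'' silently skips this case.

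This is not a technicality. If $\{\rho=0\}$ were nonempty, the maximum $x_1$ of $\psi$ would lie in it. There $\rho$ degenerates continuously to $0$ at the edge of its support. Caffarelli gives no $C^2$ control, and the identity $\rho(x_1)=\mu(T(x_1))\det(I-D^2\psi(x_1))$ is unavailable. Thus the lower bound $\rho\ge\epsilon$ is assumed in order to be proved. Since you invoke Caffarelli globally, the upper bound inherits the same problem.

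You can close the gap in either of two ways.
\begin{enumerate}
\item Use the Jacobs--Kim--Tong comparison principle for $m>1$, as the paper does in the super-linear case.
\item Replace the energy by $\int f_m(\rho)+\delta\rho\log\rho$. The perturbed minimizer satisfies $\tau\bigl(f_m'(\rho_\delta)+\delta\log\rho_\delta\bigr)=C-\psi_\delta$. Since $s\mapsto f_m'(s)+\delta\log s$ is an increasing bijection from $(0,\infty)$ onto the real line, boundedness of $\psi_\delta$ gives crude two-sided bounds, and your maximum-principle argument applies verbatim. The bounds $\epsilon\le\rho_\delta\le\epsilon^{-1}$ then pass to the limit $\delta\to 0$ by $\Gamma$-convergence and uniqueness of the minimizer for $m>1$.
\end{enumerate}

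\textbf{A smaller point on the comparison route.} The constants $\epsilon$ and $\epsilon^{-1}$ are not probability densities. So what you need is order preservation of the scheme between measures of \emph{different} total mass, taken from the cited comparison results. ``Monotonicity plus mass conservation'' within probability measures does not suffice.
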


\subsection{Stability of the JKO scheme}

    In this section, we show a simple stability result when both the domain, and the initial data, are approximated. More precisely, consider the following framework:
    \begin{itemize}
        \item We let $(\Omega_n)_{n \geq 0}$ be a non-decreasing sequence of convex domains of $\bb{R}^d$ (resp. $\Omega_n = \bb{T}^d$ for all $n \geq 0$), and we set $\Omega = \bigcup_{n \geq 0} \Omega_n$. 
        \item For each $n \geq 0$, we let $\mu_n \in \cl{P}_2(\Omega_n)$, and we consider $\mu \in \cl{P}_2(\Omega)$. Furthermore we assume that $\mu_n \to \mu$ in $\bb{W}_2(\Omega)$. We let $\rho_n$ be a minimizer for the one-step JKO problem starting from $\mu_n$ on the domain $\Omega_n$. 
    \end{itemize}
    We then have the following stability result, which follows by a simple $\Gamma$-convergence argument. 

    \begin{proposition}[Stability] \label{prop: stability_JKO}
        Under the above framework, then up to subsequence we have $\rho_n \to \rho$ in $\bb{W}_2(\Omega)$ as $n \to +\infty$, where $\rho$ is a minimizer of the one-step JKO problem starting from $\mu$ on the domain $\Omega$. 
    \end{proposition}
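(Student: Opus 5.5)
The plan is a $\Gamma$-convergence argument: equicoercivity of the functionals $F_n[\eta] := \scr{E}_m[\eta] + \frac{1}{2\tau} W_2^2(\eta,\mu_n)$ on $\cl{P}_2(\Omega_n)$ (extended by $+\infty$ outside $\cl{P}_2(\Omega_n)$), a liminf inequality, and a recovery sequence. Throughout, measures on $\Omega_n$ are viewed as measures on $\Omega$.

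\textbf{Compactness.} First I bound $\sup_n F_n[\rho_n]$. I fix any $\xi \in \cl{P}_{2,ac}(\Omega_0)$ with finite entropy, for instance a normalized constant on a ball inside $\Omega_0$. Since $\xi$ is admissible for every $n$, minimality gives $F_n[\rho_n] \le \scr{E}_m[\xi] + \frac{1}{2\tau}W_2^2(\xi,\mu_n)$, and the right-hand side is bounded because $\mu_n \to \mu$ in $\bb{W}_2$.

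\begin{itemize}
    \item If $\Omega$ is unbounded, then $m > m_c^2$, and the lower bound \eqref{eq: lower_bound_JKO} (which comes from Proposition \ref{prop: entropy_lower_bound}) gives $\sup_n M_2[\rho_n] < \infty$. Here $M_2[\mu_n]$ is bounded by the $\bb{W}_2$ convergence.
    \item If $\Omega$ is bounded, or is the torus, compactness is automatic.
\end{itemize}

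By Prokhorov's theorem a subsequence satisfies $\rho_n \to \rho$ narrowly, and $\rho$ is concentrated on $\overline{\Omega}$. Proposition \ref{prop: m_entropy_lsc} gives $\scr{E}_m[\rho] \le \liminf_n \scr{E}_m[\rho_n]$. Lower semicontinuity of $W_2$ under narrow convergence of both arguments gives $W_2(\rho,\mu) \le \liminf_n W_2(\rho_n,\mu_n)$. Together these yield the liminf inequality $F[\rho] \le \liminf_n F_n[\rho_n]$.

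\textbf{Recovery sequence.} Given $\eta \in \cl{P}_2(\Omega)$ with $F[\eta] < \infty$, I need $\eta_n \in \cl{P}_2(\Omega_n)$ with $\eta_n \to \eta$ in $\bb{W}_2$ and $\scr{E}_m[\eta_n] \to \scr{E}_m[\eta]$. Then $W_2(\eta_n,\mu_n) \to W_2(\eta,\mu)$ by the triangle inequality, so $\limsup_n F_n[\eta_n] = F[\eta]$. I build $\eta_n$ in two stages.

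\begin{itemize}
    \item \emph{Restriction.} Take $\eta_N := \eta \resmes \Omega_N$, renormalized. Since $\Omega$ is convex and open-ish, the sets $\Omega_N$ exhaust its interior and $\partial\Omega$ is Lebesgue-null, so monotone and dominated convergence give $\eta_N \to \eta$ in $\bb{W}_2$ and $\scr{E}_m[\eta_N] \to \scr{E}_m[\eta]$. The normalization factor tends to $1$, and $f_m$ behaves homogeneously, apart from the $\log$ term when $m=1$, which is harmless.
    \item \emph{Diagonal.} Then set $\eta_n = \eta_{N(n)}$, with $N(n) \le n$ and $N(n) \to \infty$.
\end{itemize}

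When $m<1$, the entropy may see a singular part of $\eta$ only through $\eta^{ac}$. This causes no problem, because restriction acts on both parts consistently.

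\textbf{Conclusion.} Combining the two steps, $F[\rho] \le \liminf_n F_n[\rho_n] \le \limsup_n F_n[\eta_n] = F[\eta]$ for every admissible $\eta$, so $\rho$ is a minimizer. Convergence in $\bb{W}_2$, rather than merely narrow convergence, still has to be shown. On bounded domains narrow convergence is already enough. In the unbounded case I upgrade it by taking $\eta = \rho$ above. This gives $\lim_n F_n[\rho_n] = F[\rho]$, and since each of the two terms is lower semicontinuous, each term converges separately, in particular $W_2(\rho_n,\mu_n) \to W_2(\rho,\mu)$.

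\textbf{Main obstacle.} The hardest step is turning this into uniform integrability of $|x|^2$ along $\rho_n$. The sketch is as follows. Take optimal plans $\gamma_n$ from $\rho_n$ to $\mu_n$; they converge to an optimal plan $\gamma$ from $\rho$ to $\mu$. Convergence of the costs $\int |x-y|^2 \dd{\gamma_n}$, together with the $\bb{W}_2$ convergence of the second marginals $\mu_n$, then shows that the second moments are tight: this uses $|x|^2 \le 2|x-y|^2 + 2|y|^2$ and the fact that uniform integrability of $|x-y|^2$ follows from convergence of its integrals. Tight second moments give $M_2[\rho_n] \to M_2[\rho]$, which with narrow convergence is $\bb{W}_2(\Omega)$ convergence.
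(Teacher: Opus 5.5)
Your proposal is correct and follows the paper's route: equicoercivity from the lower bound \eqref{eq: lower_bound_JKO}, the liminf inequality from Proposition~\ref{prop: m_entropy_lsc} and lower semicontinuity of $W_2$, the restrict-and-renormalize recovery sequence, and the equality case giving $W_2(\rho_n,\mu_n)\to W_2(\rho,\mu)$ along optimal plans $\gamma_n\to\gamma$. The only variation is the final moment upgrade: you get uniform integrability of $|x|^2$ from $|x|^2\le 2|x-y|^2+2|y|^2$, whereas the paper uses the identity $|x|^2=2|y|^2+2|x-y|^2-|x-2y|^2$ and lower semicontinuity of $\int|x-2y|^2\,d\gamma_n$ to obtain $\limsup_n M_2[\rho_n]\le M_2[\rho]$, and the two arguments are interchangeable.
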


    \begin{proof}
        We will first prove that the convergence holds narrowly by a $\Gamma$-convergence argument. Then using again this $\Gamma$-convergence, we will improve this narrow convergence to full $\bb{W}_2(\Omega)$-convergence. 
        \begin{itemize}
            \item \textbf{$\Gamma$-convergence of JKO functional}:
            We define the functional:
            \begin{equation*}
                \scr{J}_n[\eta] = 
                \begin{cases}
                    \scr{E}_m[\eta] + \frac{1}{2 \tau} W_2^2(\eta,\mu_n) & \mbox{if $\eta \in \cl{P}_2(\Omega_n)$} \\
                    +\infty & \mbox{else}
                \end{cases}
            \end{equation*}
            We shall prove that $(\scr{J}_n)_{n \geq 0}$ $\Gamma$-converges to $\scr{J}[\eta] = \scr{E}_m[\eta] + \frac{1}{2 \tau} W_2^2(\eta,\mu)$ for the narrow convergence in $\cl{P}_2(\Omega)$.
            \begin{enumerate}
                \item \emph{$\Gamma-\liminf$:} If $\eta_n \rightharpoonup \eta$ and $\sup_n \scr{J}_n[\eta_n] < +\infty$, then if $\Omega$ is bounded, we can use l.s.c of the entropy and joint l.s.c of the $W_2$-distance to conclude. On unbounded $\Omega$, we use inequality \ref{eq: lower_bound_JKO} obtained during the proof of Proposition \ref{prop: existence_one_step} to derive uniform upper bound on $(M_2[\rho_n])_n$ (using that $(M_2[\mu_n])_n$ is uniformly bounded by $\bb{W}_2(\Omega)$ convergence), and then conclude using joint l.s.c of $W_2$ and the l.s.c result of Proposition \ref{prop: m_entropy_lsc}. 
                \item \emph{$\Gamma-\limsup$:} Fix $\eta \in \cl{P}_2(\Omega)$ with $\scr{J}[\eta] < +\infty$, and define for $n$ large enough (such that $\eta(\Omega_n) \neq 0$) the measure $\eta_n = \eta(\Omega_n)^{-1} \eta \resmes \Omega_n$, which converges to $\eta$ in $\bb{W}_2(\Omega)$. As $W_2$ is continuous for this convergence, we have $W_2^2(\eta_n,\mu_n) \to W_2^2(\eta,\mu)$, and monotone convergence provides $\scr{E}_m[\eta_n] \to \scr{E}_m[\eta]$. 
            \end{enumerate}
            
            \item \textbf{Narrow convergence of $(\rho_n)_{n \geq 0}$}: We argue that the sequence $(\rho_n)_{n \geq 0}$ is precompact for the narrow convergence of the sequence of minimizers for this convergence. In bounded domain, this is immediate. In unbounded domains, fix some $\eta_0 \in \cl{P}_2(\Omega_0)$ with finite $\scr{E}_m[\eta_0]$, then $\sup_n \scr{J}_n[\rho_n] \leq \scr{E}_m[\eta_0] + \frac{1}{2 \tau}(M_2[\eta_0] + \sup_n M_2[\mu_n]) < +\infty$. Therefore using again inequality \ref{eq: lower_bound_JKO} we deduce that $\sup_n M_2[\rho_n] < +\infty$, providing tightness, and hence precompactness, of the sequence of minimizers. By $\Gamma$-convergence, the sequence $\rho_n$ is converging narrowly, up to subsequence, to $\rho$ minimizer of the One-Step JKO problem starting from $\mu$ (we will now assume that we did this extraction). 
            
            \item \textbf{Upgrading to $\bb{W}_2(\Omega)$-convergence}: Using that the $\Gamma-\liminf$ and $\Gamma-\limsup$ inequalities must be equalities for the sequence $(\rho_n)_{n \geq 0}$, we deduce that we have convergence of the transport distance: $W_2^2(\rho_n,\mu_n) \to W_2^2(\rho,\mu)$. We argue that such convergence is enough to deduce $\bb{W}_2(\Omega)$ convergence of $(\rho_n)_n$ toward $\rho$. To do so, let $\gamma_n$ be an optimal transport plan between $\rho_n$ and $\mu_n$. By standard stability results in optimal transport theory, $\gamma_n$ is converging narrowly to $\gamma$, optimal transport plan between $\rho$ and $\mu$. Next we use that $|x|^2 = 2 |y|^2 + 2|x-y|^2 - |x-2y|^2$ which gives, after integrating against $\gamma_n$,
            \begin{equation*}
                M_2[\rho_n] = 2 M_2[\mu_n] + 2 W_2^2(\rho_n,\mu_n) - \int_{\Omega \times \Omega} |x-2 y|^2 \dd{\gamma_n}
            \end{equation*}
            Now using narrow convergence of $\gamma_n$, we have $\int_{\Omega \times \Omega} |x-2 y|^2 \dd{\gamma} \leq \liminf_n \int_{\Omega \times \Omega} |x-2 y|^2 \dd{\gamma_n}$. Therefore by $\bb{W}_2(\Omega)$-convergence of $(\mu_n)_n$ and convergence of $W_2^2(\rho_n,\mu_n)$ we obtain
            \begin{equation*}
                \limsup_n M_2[\rho_n] \geq 2 M_2[\mu] + 2 W_2^2(\rho,\mu) - \int_{\Omega \times \Omega} |x-2 y|^2 \dd{\gamma} = M_2[\rho]
            \end{equation*}
            which, combined with the l.s.c. of the second moment for the narrow convergence gives $M_2[\rho_n] \to M_2[\rho]$, i.e. $\rho_n \to \rho$ in $\bb{W}_2(\Omega)$. \qedhere
        \end{itemize}
    \end{proof}

    More interestingly, when the domain $\Omega_N$ are bounded, one can say a bit more. Let $u_N := \tau f_m'(\rho_N) + \frac{1}{2} |x|^2$, which, by optimality, is a Brenier's potential from $\rho_N$ to $\mu_N$. We also let $u := \tau f'_m(\rho) + \frac{1}{2} |x|^2$, assuming that $\Omega$ is volume-regular to ensure absolute continuity of minimizers in the case $m < 1$. 

    \begin{proposition}[Convergence of potentials] \label{prop: convergence_potentials}
        We have $u_N \to u$ locally uniformly on $\Omega$. In the sense that if $Q \subset \rm{int}(\Omega)$ is bounded, then $Q \subset \Omega_N$ for any $N$ large enough, and $u_N \to u$ uniformly on $Q$. 
    \end{proposition}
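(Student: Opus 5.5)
We need to prove that $u_N \to u$ locally uniformly, where $u_N = \tau f_m'(\rho_N) + \frac12|x|^2$ is a Brenier potential from $\rho_N$ to $\mu_N$ on $\Omega_N$. The plan is to combine compactness of convex functions with uniqueness of the limit.

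\textbf{Compactness.} By the optimality conditions of Theorem \ref{thm: optimality_conditions}, $u_N = \frac12|x|^2 - \psi_N$ on $\Omega_N$, with $\psi_N$ a Kantorovich potential from $\rho_N$ to $\mu_N$. Hence $u_N(x) = \sup_{y \in \Omega_N} \{ x\cdot y - v_N(y)\}$ with $v_N = u_N^*$ restricted to $\mathrm{supp}\,\mu_N$. In particular $u_N$ is convex, and its subgradients at $x$ are points of $\Omega_N$ reached by the transport.

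Fix a bounded $Q \Subset \mathrm{int}(\Omega)$, and pick a slightly larger bounded convex $Q' \Subset \mathrm{int}(\Omega)$ with $Q' \subset \Omega_N$ for $N$ large. I want to show that $(u_N)$ is uniformly bounded on $Q'$; convexity then gives uniform Lipschitz bounds on $Q$, and Arzel\`a--Ascoli yields a subsequence converging uniformly on $Q$.

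\emph{Upper bound.} I will use the quadratic deviation Lemma \ref{lemma: quadratic_deviation_min}. Since $\tau f'_m(\rho_N) = -\psi_N$ up to the normalization, a lower bound on $\rho_N$ on part of $Q'$ bounds $u_N$ from above there. Concretely, $\int_{Q'}\rho_N \to \int_{Q'}\rho > 0$ (if $\rho(Q') > 0$), so there are points where $\tau f'_m(\rho_N)$ is controlled from above. Then $1$-semiconcavity of $\psi_N$ (equivalently, convexity of $u_N$ together with $u_N \le$ its value at the vertices of a simplex) gives an upper bound on $Q'$. The normalization is delicate for $m \ne 1$, since the constant $C$ was absorbed in Theorem \ref{thm: optimality_conditions}; for $m<1$, however, $\psi_N<0$ is pinned.

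\emph{Lower bound.} Take a subgradient $y \in \Omega_N$ at a point $x_0$ where $u_N$ is controlled. Then $u_N(x) \ge u_N(x_0) + y\cdot(x-x_0)$. The vector $y$ is bounded as soon as the transported points stay bounded, which follows from the uniform second moment bounds of $\mu_N$ and $\rho_N$ (Proposition \ref{prop: stability_JKO}) together with a cyclical monotonicity argument.

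\textbf{Identification of the limit.} Let $\bar u$ be the subsequential local uniform limit on $\mathrm{int}(\Omega)$. Since $\rho_N \to \rho$ in $\mathbb{W}_2(\Omega)$ (Proposition \ref{prop: stability_JKO}), the gradients $\nabla u_N$ converge a.e. to $\nabla \bar u$ by convexity, and the transport maps converge by stability of optimal transport. Therefore $\nabla \bar u$ is the optimal map from $\rho$ to $\mu$, i.e. $\nabla\bar u = \nabla u$ $\rho$-a.e. Because $\rho>0$ a.e. (Proposition \ref{prop: positivity_integrability}) and $\mathrm{int}(\Omega)$ is connected, $\bar u = u + c$.

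To fix $c$, I pass to the limit in the pointwise identity $u_N = \tau f'_m(\rho_N) + \frac12|x|^2$. Local uniform convergence of $u_N$ gives locally uniform convergence of $f'_m(\rho_N)$, hence of $\rho_N$ after inverting $f'_m$. Since $\rho_N \to \rho$ weakly, the local limit must coincide with $\rho$, which forces $c = 0$. Uniqueness of the limit then upgrades subsequential convergence to convergence of the whole sequence.

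\textbf{Main obstacle.} I expect the hardest step to be the uniform upper bound on $u_N$ on compacts, equivalently a uniform local lower bound on $\rho_N$ for $m \ge 1$, where $f'_m(0)$ is finite and vacuum is possible. For $m>1$, the constant $C$ is also not free, because of the positive part $[-\psi]_+$. I would first try to control $u_N$ from above via Kantorovich duality, $u_N(x) \le \frac12|x|^2 - \psi_N(x)$ with $\psi_N = \phi_N^c \le \frac12|x-y|^2 - \phi_N(y)$, evaluated at a fixed $y$ in $\mathrm{supp}\,\mu_N$ near a point of $\mathrm{supp}\,\mu$. This requires a uniform lower bound on $\phi_N(y)$, which I would obtain from mass considerations in the optimality condition.
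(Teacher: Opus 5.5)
There is a genuine gap in the compactness step, and it comes from a reversed monotonicity. Since $f_m''>0$, the map $t\mapsto f'_m(t)$ is increasing for every $m>0$. So $u_N=\tau f'_m(\rho_N)+\frac12|x|^2$ is bounded \emph{above} where $\rho_N$ is bounded above, and bounded \emph{below} where $\rho_N$ is bounded below.

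Your upper-bound paragraph asserts the opposite (``a lower bound on $\rho_N$ on part of $Q'$ bounds $u_N$ from above''). You support it with $\rho(Q')>0$, but that only produces points where $\rho_N$ is large, hence where $u_N$ is large. Your ``main obstacle'' paragraph likewise calls the upper bound on $u_N$ ``equivalently a uniform local lower bound on $\rho_N$''. The duality estimate you propose there, $\psi_N(x)\le\frac12|x-y|^2-\phi_N(y)$, actually gives the \emph{lower} bound $u_N(x)\ge x\cdot y-\frac12|y|^2+\phi_N(y)$. There is also no normalization issue, since $u_N$ is defined directly from $\rho_N$.

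Symmetrically, your lower-bound paragraph starts from ``a point $x_0$ where $u_N$ is controlled''. This has to mean controlled from below, and no such point is produced. The appeal to second moments and cyclical monotonicity does not, as stated, bound the subgradient at a given point uniformly in $N$. So neither uniform bound is established, and these bounds are the whole content of the proposition.

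The paper obtains them as follows. Convexity of $u_N$ gives $\Delta f'_m(\rho_N)\ge -d/\tau$. Then Lemma~\ref{lemma: L1-Linfty_regularization}, with $\rho_N(B_{2r}(x))\le 1$, gives $\rho_N\le M$ on $B_r(x)$ uniformly in $N$, hence $u_N\le \tau f'_m(M)+\frac12|\cdot|^2$ there. The lower bound is free for $m>1$ since $f'_m\ge 0$, so the vacuum that worried you is harmless. For $m\le 1$ it follows by contradiction. Suppose $u_N(x_N)\to-\infty$ with $x_N\in B_{r/3}(x)$. For $y\in B_{r/3}(x)$ the point $z_N=2y-x_N$ lies in $B_r(x)$, and $u_N(y)\le \frac12 u_N(x_N)+\frac12\sup_{N,B_r(x)}u_N\to-\infty$ uniformly. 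Then $\rho_N\to 0$ uniformly on $B_{r/3}(x)$, so $\rho$ vanishes there, contradicting $\rho>0$ a.e.\ (Proposition~\ref{prop: positivity_integrability}).

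Your own ingredients could be reassigned into a valid alternative route. For the upper bound, unit mass gives points where $\rho_N\le 1/|B|$ in small balls around the vertices of a simplex containing $Q'$, and convexity then bounds $u_N$ above on the simplex. For the lower bound when $m\le 1$, $\rho_N(Q')\to\rho(Q')>0$ gives a point where $\rho_N$, hence $u_N$, is bounded below, and the upper bound on a neighbourhood then bounds the subgradient there.

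Finally, in the identification step the gradient/transport-stability argument is superfluous. It also fails for $m>1$, where $\rho$ may vanish on open sets and $u$ is not a Brenier potential off $\mathrm{supp}\,\rho$. Your second argument, inverting $f'_m$ and using $\rho_N\to\rho$, suffices on its own and is what the paper does.
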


    \begin{proof}
        Let $\bb{B}_2 = B_{2r}(x) \subset \Omega$, contained in all the $\Omega_N$ for $N$ large enough. By optimality condition, $u_N$ is convex, which implies that $\Delta f'_m(\rho_N) \geq -\frac{d}{\tau}$. Using Lemma \ref{lemma: L1-Linfty_regularization} to $\rho_N$ on $Q_{3r}(x)$, we deduce that $(\rho_N)_N$ is uniformly bounded on $\bb{B}_1 = B_r(x)$ (as $\rho_N(Q_r(x)) \leq 1$). 
        \begin{enumerate}
            \item If $m > 1$, then as $f'_m(\rho_N) = \frac{m}{m-1} \rho_N^{m-1}$ is non-negative, we have $u_N \geq \frac{1}{2} |x|^2$, i.e. $u_N$ is uniformly lower-bounded on $\bb{B}_1$. It is also uniformly upper bounded on this set by the uniform upper bound on $\rho_N$. Therefore the sequence $(u_N)_{N \geq 0}$ is uniformly bounded on $\bb{B}_1$. Since the sequence is convex, it converges, up to subsequence, locally uniformly on $\bb{B}_1$, to another convex function $u$. But as $\rho_N = \frac{m-1}{\tau m} \left ( u_N - \frac{1}{2} |x|^2 \right )^\frac{1}{m-1}$, the limiting function must be equal to $u$. 
            
            \item If $m \leq 1$, the upper bound only provides an uniform upper bound on $(u_N)_{N \geq 0}$ on $\bb{B}_1$. We argue that this sequence must also be bounded from below on $\bb{B}_{1/3} = B_{r/3}(x)$. Indeed, suppose that this is not the case, then we can find $x_N \in \bb{B}_{1/3}$ such that $u_N(x_N) \to -\infty$. Let $y \in \bb{B}_{1/3}$, and define $z_N = 2 y - x_N \in B_r(x)$, then we have $y = \frac{1}{2}(z_N + x_N)$, hence
            \begin{equation*}
                u_N(y) \leq \frac{1}{2} u_N(x_N) + \frac{1}{2} u_N(z_N) \leq \frac{1}{2} u_N(x_N) + \frac{1}{2} \sup_{N,\bb{B}_1} u_N 
            \end{equation*}
            Therefore $u_N \to -\infty$ uniformly on $\bb{B}_{1/3}$. Exploiting the relation between $u_N$ and $\rho_N$, this shows that $\rho_N \to 0$ uniformly on $\bb{B}_{1/3}$, hence $\rho = 0$ on this set. But as $\rho > 0$ a.e. this is absurd. Hence the sequence is uniformly lower bounded on $\bb{B}_{1/3}$, and converges, up to subsequence, locally uniformly, to a convex function, which again must be equal to $u$. 
        \end{enumerate}
        Therefore for all $x \in \rm{int}(\Omega)$, $u_N$ converges uniformly to $u$ in a neighborhood of $x$, which concludes. 
    \end{proof}

\section{One-Step Improvement of Monge-Amp\`ere lower bound} \label{section: one_step_improvement}

    The first step toward our result is to show that if $\mu$ is regular enough and satisfies a Monge-Amp\`ere lower bound, then after one step of the JKO scheme, this lower bound will be improved. 

\begin{theorem}[One-Step Improvement of Monge-Amp\`ere lower bound] \label{thm: one_step_improvement}
    Suppose that $\Omega$ is a cube or the torus in dimension $d \in \{ 1, 2 \}$. Let $\mu \in \cl{P}_{ac}(\Omega) \cap C^2(\Omega)$, strictly positive, and such that $u_m[\mu] = \frac{1}{2} |x|^2 + \tau f_m'(\mu)$ is convex. Then the same holds for $\rho = Q_m^\tau[\rho]$. 
    
    Furthermore, suppose that $\det(D^2 u_m[\mu])^\frac{1}{d} \geq \lambda \geq 0$, and additionally in the case of a cube, that $\nabla \mu(x) \cdot n = 0$ for all $x \in \partial Q \setminus \cl{C}$, and $\nabla \eta(x) = 0$ for all $x \in \cl{C}$ (see Section \ref{subsection: treating_boundary} for the definition of $\cl{C}$). Then one of the following holds:
    \begin{enumerate}
        \item $\det(D^2 u_m[\rho])^\frac{1}{d} \geq 1$.
        \item There exists $\Lambda > 0$ such that $\det(D^2 u_m[\rho])^\frac{1}{d} \geq \Lambda$ and
        \begin{equation}
            1 + \frac{1}{\Lambda^{d(m-1) + 1}} - \frac{1}{\Lambda^{d m}} \geq \lambda
        \end{equation}
    \end{enumerate}
\end{theorem}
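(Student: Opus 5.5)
The plan is a maximum-principle argument for the Monge–Amp\`ere determinant of the Brenier potential, applied at its minimum point. Write $u := u_m[\rho]$ and $v := u_m[\mu]$.

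\textbf{Step 1: setup and regularity.} By Proposition \ref{prop: propagation_upper_lower_bound}, $\epsilon \leq \rho \leq \epsilon^{-1}$, since $\mu$ is $C^2$ and strictly positive on a compact $\Omega$. By Theorem \ref{thm: optimality_conditions}, $u = \frac12|x|^2 - \psi$, where $\psi$ is the Kantorovich potential from $\rho$ to $\mu$. Hence $u$ is a Brenier potential, and in particular convex, which already proves the first assertion.

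Caffarelli's regularity (Theorem \ref{thm: Caff_regu}) gives $u \in C^{1,\beta}$. The relation $\rho = (f_m')^{-1}\big(\tau^{-1}(u - \frac12|x|^2)\big)$ transfers this regularity back to $\rho$, and bootstrapping gives $u \in C^{3,\alpha}$, with $\nabla u \in C^{2,\alpha}$. In the torus one can go further and get $C^{4,\alpha}$. In the cube one cannot, so where fourth derivatives appear I would either mollify $\mu$ or argue in the viscosity sense. The Monge–Amp\`ere equation then holds classically:
\begin{equation*}
\mu(\nabla u)\det D^2 u = \rho, \qquad \nabla v(\nabla u(x)) = x, \qquad D^2 v(\nabla u) = (D^2 u)^{-1}.
\end{equation*}

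\textbf{Step 2: the determinant as a function of $u$.} Set $G := \log \circ (f_m')^{-1}(\cdot/\tau)$. By the optimality conditions, $\rho = e^{G(u - \frac12|x|^2)}$ and $\mu = e^{G(v - \frac12|y|^2)}$. Therefore
\begin{equation*}
L(x) := \log\det D^2 u(x) = G\big(u - \tfrac12|x|^2\big) - G\big(v(\nabla u) - \tfrac12|\nabla u|^2\big).
\end{equation*}
Let $x_0$ be a minimum point of $L$ on the compact set $\Omega$, and put $\Lambda := e^{L(x_0)/d}$. If $\Lambda \geq 1$ we are in alternative (1). Otherwise $\Lambda < 1$, and I will extract the inequality of alternative (2).

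First-order condition at an interior $x_0$: differentiating and using $\nabla v(\nabla u) = x$ gives
\begin{equation*}
\nabla L = \big[G'_\rho\, I + G'_\mu\, D^2 u\big](\nabla u - x) = 0 .
\end{equation*}
Second-order condition: $D^2 L(x_0) \succeq 0$. I will compute it by differentiating the identity $\log\det D^2 u = L$ twice, so that the third derivatives of $u$ enter only through $\nabla \log\det D^2 u$, which vanishes at $x_0$.

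Together with $D^2 v(\nabla u) = A^{-1}$, where $A := D^2 u(x_0)$, this yields an algebraic matrix inequality in $A$ and $A^{-1}$. The hypothesis $\det(D^2 v)^{1/d} \geq \lambda$ is imported through $D^2 v = I + \tau D^2 f_m'(\mu)$ evaluated at $\nabla u(x_0)$. The exponents $d(m-1)+1$ and $dm$ should come from the relations $\rho(x_0) = \Lambda^d \mu(\nabla u(x_0))$ and $f_m'(z) \propto z^{m-1}$. Specifically, the $\tau$-weighted pressure Hessians of $\rho$ and $\mu$ rescale by powers $\Lambda^{d(m-1)}$ and $\Lambda^{dm}$ when compared.

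\textbf{Step 3: the algebraic inequality (main obstacle).} This is where $d \leq 2$ enters. In $d = 1$ everything is scalar. In $d = 2$, for a $2\times 2$ positive definite matrix one has $A + \det(A) A^{-1} = (\operatorname{tr} A) I$. This identity lets me rewrite the trace and determinant inequality obtained from $D^2 L(x_0) \succeq 0$ as a scalar inequality in $\Lambda$ alone, after bounding $\operatorname{tr}$ from below by $d\det^{1/d}$ via AM–GM in the right direction. I expect this to be the delicate part: choosing which quantity to minimise (perhaps $\det D^2 u$ itself, or $\rho/\mu(\nabla u)$) so that the unfavourable third-order and cross terms have a sign. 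My first attempt would be the scalar case $d = 1$, to identify the exact combination $1 + \Lambda^{-(d(m-1)+1)} - \Lambda^{-dm}$.

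\textbf{Step 4: boundary minimum points.} On the torus there is no boundary. For the cube, if $x_0 \in \partial Q \setminus \mathcal{C}$, the Neumann condition $\nabla\mu \cdot n = 0$ together with the fact that $\nabla u$ maps each face into itself allows an even reflection of $u$, $\rho$ and $\mu$ across the face. This reflection preserves the $C^{2}$ regularity used in Step 2, so $x_0$ becomes an interior minimum of the reflected problem. At corner and edge points in $\mathcal{C}$, the condition $\nabla \mu = 0$ together with $\nabla u(x_0) = x_0$ (corners are fixed by the transport map) makes the first-order terms vanish. Repeated reflection then again reduces to the interior computation.
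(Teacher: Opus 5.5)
\textbf{The gap is in Step~2.} You use the identities $\nabla v(\nabla u(x)) = x$ and $D^2 v(\nabla u) = (D^2 u)^{-1}$, and these are false for $v = u_m[\mu] = \tfrac12|y|^2 + \tau f_m'(\mu)$.

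\begin{itemize}
\item They hold for the Legendre transform $u^*$, which is the Brenier potential from $\mu$ to $\rho$. Your $v$ is an unrelated convex function: along the scheme it is the potential that transports $\mu$ back to the previous iterate.
\item Concretely, $D^2 v(\nabla u) = \mathrm{I}_d + \tau D^2 f_m'(\mu)(\nabla u)$, while $(D^2 u)^{-1} \approx \mathrm{I}_d - \tau D^2 f_m'(\rho)$. Your identity therefore reverses the sign of exactly the pressure Hessian you are trying to estimate.
\item Your first-order condition is wrong as a result. Write $p = \tau f_m'(\rho)$ and $q = \tau f_m'(\mu)$. The correct condition is $\nabla L = G'_\rho \nabla p - G'_\mu D^2 u\, \nabla q(\nabla u)$, because $\nabla v(\nabla u) - \nabla u = \nabla q(\nabla u)$, not $x - \nabla u$.
\item The hypothesis on $v$ also becomes useless. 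If $D^2 v(\nabla u(x_0)) = A^{-1}$ with $A = D^2u(x_0)$, then $\det(D^2 v)^{1/d} \ge \lambda$ only says $\lambda\Lambda \le 1$. No matrix inequality in $A$ and $A^{-1}$ alone can then give alternative (2).
\end{itemize}
In the actual argument, $D^2 v(\nabla u(x_0))$ and $A$ are independent matrices. The bound $\lambda$ enters only through the matrix AM--GM inequality $\tfrac1d \operatorname{tr}(D^2 v(\nabla u) A) \ge \det(D^2 v(\nabla u))^{1/d}\det(A)^{1/d} \ge \lambda\Lambda$.

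\textbf{What should replace your Steps~2--3.}
\begin{itemize}
\item Apply $f_m'$ to the Monge--Amp\`ere equation to get $p = J^{m-1} q(\nabla u)$ with $J = \det D^2 u$ (or $p = \log J + q(\nabla u)$ when $m = 1$). Differentiate this twice at the minimum $x_0$ of $J$. This needs only $u \in C^3$, so you do not need mollification or viscosity arguments.
\item The cross terms vanish because $\nabla J(x_0) = 0$.
\item The term $q(\nabla u) D^2(J^{m-1})$ is $\succeq 0$: $q$ has the sign of $m-1$, and $x_0$ minimizes $J^{m-1}$ when $m > 1$ and maximizes it when $m < 1$. This yields $D^2 p \succeq J^{m-1}[A D^2 q(\nabla u) A + \sum_i \partial_i q(\nabla u) D^2\partial_i u]$.
\item The third-order term does not vanish as a matrix. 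It disappears only after you take the trace against $A^{-1}$, since $\operatorname{tr}(A^{-1} D^2\partial_i u) = \partial_i \log J = 0$.
\item Substituting $D^2 p = A - \mathrm{I}_d$ and $D^2 q = D^2 v - \mathrm{I}_d$ gives $d - \operatorname{tr}A^{-1} + \Lambda^{d(m-1)}\operatorname{tr}A \ge d\Lambda^{d(m-1)+1}\lambda$.
\item For $d = 2$, your identity (equivalently $\operatorname{tr}A^{-1} = \operatorname{tr}A/\det A$), together with $\operatorname{tr}A \ge 2\Lambda$ when $\Lambda < 1$, finishes the argument.
\end{itemize}

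Two consequences for your plan. First, $\Lambda^{d(m-1)}$ is the pressure ratio $p/q(\nabla u)$ at $x_0$, not a rescaling of Hessians. Second, the argument outputs $1 + \Lambda^{-(d(m-1)+1)} - \Lambda^{-(d(m-1)+2)} \ge \lambda$. This matches the printed exponent $dm$ only for $d = 2$; for $d = 1$ the printed form collapses to $1 \ge \lambda$. Calibrating on $d = 1$, as you propose, would mislead you.

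\textbf{On Step~4.} Reflection is a viable alternative to the paper's tangent-cone treatment of faces and corners, but you need $C^3$ regularity of the reflected potential, not $C^2$. You can get it as follows. Both $\rho$ and $\mu$ have zero normal derivative on the faces (for $\rho$, because $T$ preserves faces). Reflect both across all faces to obtain a periodic problem, check that the glued potential is still convex and optimal, and then apply torus regularity. This carries the same information as the paper's identity $\nabla J \cdot n = 0$ on faces.
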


\begin{proof}
    As $\mu$ is $C^2(\Omega)$ and strictly positive, there exists $\epsilon > 0$ such that $\epsilon \leq \mu \leq \epsilon^{-1}$. Under this regularity assumption, if $\rho = Q_m^\tau[\mu]$, we have the following:
    \begin{enumerate}
        \item We have $\epsilon \leq \rho \leq \epsilon^{-1}$, and if $(\psi,\phi)$ is a pair of Kantorovich potentials from $\rho$ to $\mu$, then $\tau f_m'(\rho) = -\psi$. In particular, $u_m[\rho]$ is a Brenier potential from $\rho$ to $\mu$, and therefore convex.
        \item $\rho,\psi$ are of class $C^{3,\beta}(\Omega)$ for some $\beta \in (0,1)$ and $u_m[\rho]$ is strictly convex, in particular, $\det(D^2 u_m[\rho]) > 0$ on $\Omega$. 
    \end{enumerate}
    The first point is a consequence of \ref{prop: propagation_upper_lower_bound}. The second one follows from a bootstrap argument: the bound away from $0$ and $+\infty$ for $\rho,\mu$ provides $\psi \in C^{1,\beta}(\Omega)$ for some $\beta < 1$, and the corresponding Brenier potential is strictly convex, which in turn gives $\rho \in C^{1,\beta}(\Omega)$ by the optimality condition and the uniform bounds on $\rho$. Combining this with $\mu \in C^2(\Omega) \subset C^{1,\beta}(\Omega)$, we deduce that $\psi$, and hence $\rho$, are of class $C^{3,\beta}$. 
    
    In particular, under these regularity assumptions, using that $u_m[\rho]$ is a Brenier potential, the following Monge-Amp\`ere equation holds in the classical sense:
    \begin{equation*}
        \det(D^2 u_m[\rho]) = \frac{\rho}{\mu(\nabla u_m[\rho])}
    \end{equation*}
    We shall argue at a minimum point for $J := \det(D^2 u_m[\rho])$, and let $\Lambda^d$ be the minimum of $J$, which is strictly positive by strict convexity of $u_m[\rho]$. We will first treat the case of an interior minimum, and then treat the case where the minimum is attained at a boundary point.
    
    For simplicity, we shall write $u$ for $u_m[\rho]$ and $v$ for $u_m[\rho]$. We also let $p := \tau f_m'(\rho)$ and $q := \tau f_m'(\mu)$, so that $u = \frac{1}{2} |x|^2 + p$, $v = \frac{1}{2} |x|^2 + q$. Furthermore, applying $f'_m$ to the Monge-Amp\`ere equation gives 
    \begin{equation} \label{eq: Monge_Ampere_J}
        \begin{cases}
            J^{m-1} = \det(D^2 u)^{m-1} = \frac{p}{q(\nabla u)} & m \neq 1 \\
            \log J = \log \det(D^2 u) = p - q(\nabla u) & m = 1
        \end{cases}
    \end{equation}
    
    Let $x_0$ be a minimizer of $J$, and suppose that $x_0$ is in the interior of $\Omega$. From now on, all computations shall be performed at this particular point $x_0$. 

    \begin{itemize}
        \item \textbf{Step 1: Second-order optimality conditions at $x_0$}: Taking the Hessian of the Monge-Amp\`ere equation, and letting $R := D^2 u\, D^2 q(\nabla u)\, D^2 u + D^2\nabla u \cdot \nabla q(\nabla u)$, we get
        \begin{equation*}
            D^2 p = \begin{cases}
                J^{m-1} R 
                + (m-1)J^{m-2} \nabla J \cdot D^2 u\,\nabla q(\nabla u) 
                + q(\nabla u)\, D^2(J^{m-1}) & m \neq 1 \\[6pt]
                R + D^2(\log J) & m = 1
            \end{cases}
        \end{equation*}
        where $D^2 \nabla u \cdot \nabla q(\nabla u) = \sum_i D^2 (\partial_i u) \cdot \partial_i q(\nabla u)$. 

        For a symmetric matrix $A$, we shall write $A \succeq 0$ if $A$ is positive semi-definite. We argue that for each choice of $m$, the last term in each respective case is positive semi-definite. Indeed:
        \begin{itemize}
            \item For $m > 1$, $x_0$ is also a minimum of $J^{m-1}$, hence $D^2 J^{m-1} \succeq 0$, and since $q(\nabla u) \geq 0$ (by $q = \frac{\tau m}{m-1} \mu^{m-1} \geq 0$), we have $q(\nabla u) D^2 J^{m-1} \succeq 0$.
            \item For $m = 1$, $x_0$ is also a minimum of $\log J$, hence $D^2 \log J \succeq 0$.
            \item For $m < 1$, $x_0$ is now a maximum of $J^{m-1}$, hence $D^2 J^{m-1} \preceq 0$. Since $q = \frac{\tau m}{m-1} \mu^{m-1}$ and $m-1 < 0$, we have $q \leq 0$, therefore $q(\nabla u) D^2 J^{m-1} \succeq 0$. 
        \end{itemize}
        Using the first-order optimality condition $\nabla J = 0$, we get the inequalities
        \begin{equation*}
            \begin{cases}
                D^2 p \succeq J^{m-1}[D^2 u D^2 q(\nabla u) D^2 u + D^2 \nabla u \cdot \nabla q(\nabla u)] & m \neq 1 \\
                D^2 p \succeq D^2 u D^2 q(\nabla u) D^2 u + D^2 \nabla u \cdot \nabla q(\nabla u) & m = 1
            \end{cases}
        \end{equation*}
        To eliminate the third-order term, we shall use that, by first-order optimality, $0 = J^{-1} \nabla J = \nabla \log J = \Tr [D^2 u]^{-1} D^2 \nabla u$. Since $D^2 u \succeq 0$, taking the trace against $[D^2 u]^{-1}$ in the previous inequalities gives
        \begin{equation} \label{eq: ineq_optimal_point}
            \Tr D^2 p [D^2 u]^{-1} \geq J^{m-1} \Tr D^2 q(\nabla u) D^2 u 
        \end{equation}
        We shall next exploit this algebraic matrix inequality.

        \item \textbf{Step 2: Exploiting inequality \ref{eq: ineq_optimal_point}}: Replacing $D^2 p$ by $D^2 u - \rm{I}_d$, $D^2 q$ by $D^2 v - \rm{I}_d$, and $J$ by $\Lambda^d$, we obtain
        \begin{equation*}
            1 - \frac{1}{d} \Tr [D^2 u]^{-1} + \frac{\Lambda^{d(m-1)}}{d} \Tr D^2 u \geq \frac{\Lambda^{d(m-1)}}{d} \Tr D^2 v(\nabla u) D^2 u
        \end{equation*}
        Next, we use the classical AM-GM inequality for matrices, which gives $\frac{1}{d} \Tr AB \geq \det(A)^\frac{1}{d} \det(B)^\frac{1}{d}$ for positive symmetric matrices $A,B$. This gives $\frac{1}{d} \Tr D^2 v(\nabla u) D^2 u \geq \det(D^2 v(\nabla u))^\frac{1}{d} \det(D^2 u)^\frac{1}{d}$, which can be bounded from below by $\lambda \cdot \Lambda$, using $\det(D^2 v(\nabla u))^{1/d} \geq \lambda$ by assumption and the definition of $\Lambda$. Therefore, we get
        \begin{equation} \label{eq: ineq_optimal_point_2}
            1 - \frac{1}{d} \Tr [D^2 u]^{-1} + \frac{\Lambda^{d(m-1)}}{d} \Tr D^2 u \geq \Lambda^{d(m-1)+1} \cdot \lambda
        \end{equation}
        We now distinguish between the case $d=1$ and $d=2$.

        \item \textbf{Step 3: Dimension one}: In dimension one, we have $D^2 u = u''$ and $\det(D^2 u) = u'' = \Lambda$. Hence we obtain
        \begin{equation*}
            1 - \frac{1}{\Lambda} + \Lambda^{d(m-1) + 1} \geq \Lambda^{d(m-1) + 1} \cdot \lambda
        \end{equation*}
        and the result follows by dividing by $\Lambda^{d(m-1) + 1}$.

        \item \textbf{Step 4: Dimension two}: In dimension two, we exploit the equality $\Tr A^{-1} = \frac{\Tr A}{\det A}$, valid for any invertible symmetric matrix $A$ (and specific to this dimension), which follows by diagonalization. We obtain
        \begin{equation*}
            1 + \frac{1}{d} \Tr D^2 u \left ( \Lambda^{d(m-1)} - \frac{1}{\Lambda^d} \right ) \geq \Lambda^{d(m-1)+1} \cdot \lambda
        \end{equation*}
        If $\Lambda \geq 1$, there is nothing to do, otherwise we have $\Lambda^{d(m-1)} - \frac{1}{\Lambda^d} \leq 0$, and we can again use the AM-GM inequality, which gives $\frac{1}{d} \Tr D^2 u \geq \det(D^2 u)^\frac{1}{d} = \Lambda$ to get
        \begin{equation*}
            1 + \Lambda^{d(m-1) + 1} - \frac{1}{\Lambda^{d-1}} \geq \Lambda^{d(m-1)+1} \cdot \lambda
        \end{equation*}
        and we conclude again by dividing by $\Lambda^{d(m-1) + 1}$. 
    \end{itemize}
    This conclude the proof in case of interior minimum point. 
\end{proof}

    \begin{remark}
        The reason for restricting to small dimension is apparent in the proof: one needs to control the determinant from below using both the trace of the Hessian and its inverse. This is possible only for dimension at most $2$, and in larger dimension, it is possible to construct examples of matrices $A,B \succ 0$ with $\det(A) \geq \lambda$ and $\det(B)$ arbitrarily small, such that, upon replacing all $D^2 v(\nabla u)$ by $A$, and $D^2 u$ by $B$, the algebraic inequality \ref{eq: ineq_optimal_point_2} holds true. The typical case is, in dimension $3$, to take $B = \rm{Diag}(\epsilon,\epsilon,\epsilon^r)$ for some well chosen $r$, and $A = c \rm{I}_d$ for some constant $c > 0$. 

        On the other hand, we expect $D^2 u$ to be close to identity (as we expect $D^2 f_m'(\rho)$ to be of order $1$), and then one should in principle be able to linearize the inequality. It is, however, unclear that such a linearization can be done as the order-one estimate on $D^2 u_m$ is only formal, or strongly depends on $\mu$, which would limit its usefulness for treating the case of general initial data when iterating the estimate. 
    \end{remark}

    This analysis assumed that $x_0$ interior point; on the cube, however, the minimum can be attained at the boundary. For PDEs, this is typically handled using the Hopf lemma. Here, however, we shall need a more involved analysis. On the other hand, a careful inspection of the proof shows that it suffices to prove that even if $x_0$ is at the boundary, we have $\nabla J(x_0) = 0$, and $\Tr [D^2 u(x_0)]^{-1} D^2 J(x_0) \geq 0$, thereby relaxing the full $D^2 J(x_0) \succeq 0$ hypothesis. 
    
\subsection{Treating the boundary} \label{subsection: treating_boundary}
        The previous analysis was conducted assuming the minimum was attained in the interior of the domain. In order to tackle the general case, we shall take care of a boundary minimimum point. 
        
        We let $\Omega = Q$ be a cube in dimension one or two. We denote by $\cl{C}$ the set of corners of $Q$, and we call face of $Q$ the closures of any connected components of $\partial Q \setminus \cl{C}$ (in dimension $2$). We say that a point is in the interior of a face if it belongs to a face, and is not a corner. If $x$ is such a point, we let $F_x$ be the unique face such that $x \in F_x$. We also consider a convex function $h : \bb{R}^d \to \bb{R}$ such that $Q = \{ h \leq 0 \}$ with equality at the boundary, $h$ of class $C^1(\bb{R}^d \setminus \cl{C})$ and $\nabla h(x) = n(x)$ the outward pointing normal at any point $x$ in the interior of a face. We shall note that $n$ is in fact constant on any face $F$, and $n \perp (F-F)$. 
        
        We let $N_x Q$ be the tangent cone of $Q$ at a point $x \in Q$, which is defined as the set of all $v \in \bb{R}^d$ such that for some sequence $y_n \in Q$ and $t_n > 0$ converging to $0$ one has $y_n = x + t_n v + o(t_n)$. As $Q$ is convex, this coincides with the set of admissible directions, i.e. $v \in \bb{R}^d$ such that $x + t v \in \Omega$ for all $t$ small enough. Using Taylor's expansion, we easily see that if $\Phi$ is a $C^1$-diffeomorphism of $\Omega$, then $D \Phi(x) N_x \Omega = N_{\Phi(x)} \Omega$. 

        We recall the following optimality condition result at boundary points:
        
        \begin{lemma}[Boundary optimality condition] \label{lemma: boundary_optimality}
            Let and suppose that $x \in \partial Q$ is a minimum (resp. maximum) point of a function $f$. Then if $f$ is $C^1$ near $x$:
            \begin{enumerate}
                \item For all $v \in N_x Q$, we have $\nabla f(x) \cdot v \geq 0$ (resp. $\leq 0$). (in other word, $\nabla f(x)$ is in the polar cone of $N_x Q$). 
                \item If $f$ is $C^2$, then for all $v \in N_x Q$ is such that $\nabla f(x) \cdot v = 0$. Then $D^2 f(x)[v,v] \geq 0$ (resp. $\leq 0$).
                \item If $x \in \partial Q \setminus \cl{C}$, there exists $\lambda \leq 0$ (resp. $\geq 0$) such that $\nabla f(x) = \lambda \cdot n(x)$. 
            \end{enumerate}
        \end{lemma}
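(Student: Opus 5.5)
The plan is to reduce all three statements to one-variable calculus along admissible segments. I will use two facts already noted before the statement: $Q$ is convex, and $N_x Q$ coincides with the set of admissible directions. It is enough to treat a minimum point, since the maximum case follows by applying the result to $-f$, which flips all the inequalities and the sign of $\lambda$.

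For (1), fix $v \in N_x Q$. There is $t_0>0$ such that $x+tv \in Q$ for $t \in [0,t_0)$, and, shrinking $t_0$, $f$ is $C^1$ along this segment. Set $g(t) := f(x+tv)$. Then $g$ is $C^1$ on $[0,t_0)$ and has a minimum at the endpoint $t=0$. Hence the one-sided derivative is nonnegative: $g'(0^+) = \lim_{t \downarrow 0} (g(t)-g(0))/t \geq 0$, which is exactly $\nabla f(x)\cdot v \geq 0$.

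For (2), with $f$ of class $C^2$ and $v$ as above satisfying $\nabla f(x)\cdot v = 0$, I will use the Taylor expansion
\begin{equation*}
    f(x+tv) = f(x) + t\,\nabla f(x)\cdot v + \frac{t^2}{2} D^2 f(x)[v,v] + o(t^2).
\end{equation*}
The linear term vanishes, and minimality gives $f(x+tv) \geq f(x)$ for small $t>0$. Dividing by $t^2$ and letting $t \downarrow 0$ gives $D^2 f(x)[v,v] \geq 0$.

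For (3), the only step needing some care is identifying $N_x Q$ when $x$ lies in the interior of a face $F_x$. Since $x$ is not a corner, $x$ has a neighborhood $U$ in which $Q \cap U = \{ y \in U : (y-x)\cdot n(x) \leq 0 \}$: in a cube, only one defining constraint is active at such a point. Consequently $N_x Q = \{ v : v\cdot n(x) \leq 0\}$.
\begin{itemize}
    \item Any $w \perp n(x)$ gives $\pm w \in N_x Q$. Applying (1) to both signs yields $\nabla f(x)\cdot w = 0$. So $\nabla f(x)$ is orthogonal to the face directions, i.e. $\nabla f(x) = \lambda\, n(x)$ for some $\lambda \in \bb{R}$.
    \item Applying (1) to $v = -n(x) \in N_x Q$ gives $-\lambda \geq 0$, that is $\lambda \leq 0$.
\end{itemize}
This last point, the local description of $Q$ near a non-corner boundary point, is the only place where the cube geometry is really used. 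The rest is elementary, so I expect no serious obstacle.
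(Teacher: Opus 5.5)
Your proposal is correct and follows the paper's argument. Points (1) and (2) come from the first- and second-order Taylor expansion of $f$ along admissible directions $v \in N_x Q$. Point (3) comes from identifying $N_x Q$ with the half-space $\{v : v\cdot n(x)\le 0\}$ at non-corner boundary points. You also supply details the paper leaves implicit: the local description of the cube near a face-interior point, and the application of (1) to $\pm w$ with $w\perp n(x)$ and to $-n(x)$, which fixes the sign of $\lambda$.
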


        \begin{proof}
            We shall only consider the minimum case. The first two points follows by a Taylor expansion up to first and second order, as $0 \leq f(x+tv) - f(x) = t \nabla f(x) \cdot v + t^2 \frac{1}{2} D^2 f(x)[v,v] + o(t^2)$ for all $t > 0$ small enough. For the last point, we use $N_x Q = \{ v \in \bb{R}^d, v \cdot n(x) \leq 0 \}$
        \end{proof}

        The reason of using a cube instead of a general domain lies in the following result on the behavior of any optimal transport map on the boundary of such a set. It would be interesting to see if such a result also holds on polygonal domains, using for example the recent regularity theory for general convex domain developed in \cite{RegCaffConvexDomain}, but this would need the adaptation of the tangent cone argument to a non-$C^1$-setting. 

        \begin{proposition}[Behavior of transport on $\partial Q$] \label{prop: behavior_map_boundary}
            Let $\nu,\mu$ be two probability measures with strictly positive densities of class $C^{0,\alpha}(Q)$ for some $\alpha < 1$, and let $T$ be the optimal transport map from $\nu$ to $\mu$. Then
            \begin{enumerate}
                \item $T$ is the identity when restricted to the set of corners. 
                \item In dimension $2$, if $F$ is a face of $Q$, then $T(F) = F$. 
            \end{enumerate}
        \end{proposition}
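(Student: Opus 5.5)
The plan is to use the regularity from Theorem \ref{thm: Caff_regu}: with $C^{0,\alpha}$ densities bounded away from zero and infinity on the cube $Q$, the Brenier potential $u$ (with $T=\nabla u$) is $C^{1,\beta}$ up to the boundary, hence $T$ is a continuous map of $Q$ onto $Q$. The same holds for the inverse transport $S=\nabla u^*$ from $\mu$ to $\nu$, and $T\circ S=S\circ T=\mathrm{id}$ everywhere by continuity. So $T$ is a homeomorphism of $Q$. Invariance of domain then gives $T(\mathrm{int}\,Q)=\mathrm{int}\,Q$, and therefore $T(\partial Q)=\partial Q$. Everything else will come from combining this with the monotonicity of $T$:
\begin{equation*}
(T(x)-T(y))\cdot(x-y)\geq 0 .
\end{equation*}

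\textbf{Corners.} Let $c$ be a corner. Its normal cone is the orthant $\{w : w\cdot(y-c)\le 0 \ \forall y\in Q\}$, spanned by the outward normals of the adjacent faces. The key input is that $c=T(x)$ for a unique $x\in Q$, so $c\in\partial u(x)$.

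Since $Q$ is compact, the convex potential $u$ is characterized through $u^*$ restricted to $Q$. Then $x\in\partial u^*(c)$, and $c$ being a corner of the domain of $u^*$ means $\partial u^*(c)$ is "$\nabla u^*(c)$ plus the normal cone". More concretely, for every $y\in Q$ monotonicity gives $(T(y)-c)\cdot(y-x)\ge 0$.

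Take $y$ near $x$ and use that $T$ is a homeomorphism: $T(y)$ ranges over a relative neighborhood of $c$ in $Q$, so $T(y)-c$ covers all directions of the tangent cone $N_cQ$ (the opposite orthant). From this I expect to deduce that $x$ lies in $c+N_cQ^{\circ}$ intersected with $Q$, i.e. $x=c$. In dimension one this reduces to: $T$ is a nondecreasing homeomorphism of $[0,1]$, so it fixes $0$ and $1$.

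For $d=2$ I would reduce to the one-dimensional picture as follows:
\begin{itemize}
\item Since $T$ maps $\partial Q$ homeomorphically onto itself, it is a homeomorphism of a circle.
\item Monotonicity forces it to preserve orientation and the "ordering" along coordinate directions.
\item The corners are the only boundary points where the outward normal cone is two-dimensional. Using $T=\nabla u$ with $u\in C^1$ and the boundary behavior $\nabla u(x)\in\partial u(x)$, I expect corners to map to points at which the support geometry matches.
\end{itemize}

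\textbf{Faces.} Once corners are fixed, take a face $F=[c_1,c_2]$. The boundary curve $\partial Q$ is mapped homeomorphically with the four corners fixed. An orientation-preserving homeomorphism of the circle fixing four points maps each arc between consecutive fixed points onto itself. Orientation preservation follows from $\det DT\ge0$, or from the fact that $T$ is a degree-one map of the disk homotopic to the identity through $(1-t)\mathrm{id}+tT$. The one remaining alternative, $T$ mapping $F$ onto the complementary arc, contradicts the orientation. Hence $T(F)=F$.

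\textbf{Main obstacle.} The hard step is the corner claim: showing rigorously that a corner's preimage is itself. My first attempt uses monotonicity against points on the two adjacent faces. Put $c=0$, so that $Q=[0,1]^2$, and suppose $T(x)=0$ with $x\ne0$, say $x_1>0$. Pick $y$ with $y_1<x_1$ near the edge $\{y_1=0\}$. Monotonicity gives $T(y)\cdot(y-x)\ge0$ with $T(y)\in Q$. Letting $y$ approach points near $0$, this would force $T(y)$ to have a small first coordinate for all such $y$, i.e. $T$ maps the open set $\{y_1<x_1\}$ into a thin region near the edge. That contradicts mass balance, since the densities are bounded below and above. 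If this mass argument fails, the alternative is a reflection argument in the style of Jhaveri and Chen--Liu--Wang: reflect $\nu,\mu$ evenly across the faces, note that the reflected optimal map is the reflection of $T$ by uniqueness, and conclude that $T$ commutes with reflections. Then the fixed sets of the reflections (the face lines and, at their intersection, the corners) are preserved.
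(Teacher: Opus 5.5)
\textbf{The corner step has a genuine gap.} Your face argument is fine once the corners are known to be fixed: a homeomorphism of the circle $\partial Q$ fixing four points is automatically orientation-preserving and maps each arc between them onto itself. But the corner step, which you rightly flag as the main obstacle, is not proved, and the properties you rely on cannot prove it. The quarter turn $T(y)=(y_2,1-y_1)$ of $Q=[0,1]^2$ satisfies all of them:
\begin{itemize}
\item it is a homeomorphism of $Q$ and preserves Lebesgue measure, so mass balance holds;
\item it satisfies $(T(x)-T(y))\cdot(x-y)=0\ge 0$ and has $\det DT=1$;
\item yet it sends the corner $x=(1,0)$ to $c=0$.
\end{itemize}
In particular your first attempt breaks down. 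This $T$ maps $\{y_1<x_1\}=\{y_1<1\}$ onto $\{y\in Q: y_2>0\}$, not into a thin strip, so no mass-balance contradiction arises. Monotonicity also does not preserve coordinate orderings.

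The sentences ``I expect to deduce\dots'' and ``corners map to points at which the support geometry matches'' hide the two ideas the paper actually uses.
\begin{itemize}
\item[(i)] \emph{Corners go to corners.} This is not a topological fact, since a homeomorphism of $\partial Q$ cannot see corners. It follows because $T$ is a $C^1$ diffeomorphism up to the boundary. Hence $DT(x)N_xQ=N_{T(x)}Q$, and the dimension of the largest linear subspace contained in $N_xQ$ is preserved.
\item[(ii)] \emph{The induced permutation of the corners is trivial.} This needs more than two-point monotonicity. The graph of $T$ over the corners is $c$-cyclically monotone, and along a cycle $c_1\mapsto c_2\mapsto\dots\mapsto c_k\mapsto c_1$ this gives $\sum_i|c_i-c_{i+1}|^2\le 0$.
\end{itemize}
Faces then follow by connectedness, much as in your argument.

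\textbf{How to repair it.} The cheapest repair within your framework is \emph{strict} monotonicity. You have it because $u$ is strictly convex by Theorem~\ref{thm: Caff_regu}. Suppose $T(x)=c$ with $x\neq c$; such an $x$ exists since $T(Q)=Q$. Taking $y=c$ gives $(T(c)-c)\cdot(x-c)<0$. But $T(c)-c$ and $x-c$ both lie in the orthant $N_cQ$, on which inner products are nonnegative, a contradiction.

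Your reflection fallback can also be completed, but not as stated. Equivariance under the reflection $R$ holds by construction and yields nothing by itself. The actual content is the following chain:
\begin{itemize}
\item the even reflection of $u$ across a face is convex, because $T(Q)\subset Q$ gives $(T(x)-x)\cdot n\le 0$ on that face;
\item it is therefore the Brenier potential of the doubled problem, by uniqueness;
\item Caffarelli's interior regularity on the doubled rectangle makes it $C^1$ across the face;
\item this forces $(T(x)-x)\cdot n=0$ there, and fixed corners follow by intersecting two faces.
\end{itemize}
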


        \begin{proof}
            By Caffarelli's regularity on the cube \ref{thm: Caff_regu}, $T$ is a $C^{1,\alpha}$-diffeomorphism of $Q$, therefore $D T(x)(N_x Q) = N_{T(x)} Q$ for any $x \in Q$. We argue that this implies that corner are sent to corner, interior face point to interior face point, and interior point to interior point. A simple way to see this is to look at the largest dimension of a subspace contained $N_x Q$, which completely characterize corners, interior faces, and interior point, which is an algebraic invariant by linear invertible map (such as $D T(x)$). 

            Now, for the first point, we notice that, by bijectivity of $T$, $T(\cl{C}) = \cl{C}$. By \cite[Theorem 1.38]{OT_Filippo} the support of the optimal transport plan from $\nu$ to $\mu$, equal to $\{ (x, T(x)), x \in Q \}$, is $c$-cyclically monotone. By restriction, the same holds for $\{ (x, T(x)), x \in \cl{C} \}$. But then, using \cite[Theorem 1.49]{OT_Filippo}, $T$ is the optimal transport map from $\sum_{c \in \cl{C}} \delta_c$ to itself, i.e. it must be the identity on $\cl{C}$. 

            Let's finally prove the last point. Let $F$ be a face, and write it down $F = [c_1,c_2]$ for two corners $c_1,c_2 \in \cl{C}$. Then we already now that $T(]c_1,c_2[) \subset \partial Q \setminus \cl{C}$. As $T$ is continuous, $T(]c_1,c_2[)$ is connected, therefore it is fully contained in a face $G$. But then $T([c_1,c_2])$ is also contained in $G$, and contains $[T(c_1),T(c_2)] = [c_1,c_2] = F$ by connectedness. Hence $F \subset T(F) \subset G$, which forces $F = G$ and concludes the proof. 
        \end{proof} 

        We now have all the ingredients needed to finish the proof. 

        \begin{proof}[Ending proof of Theorem \ref{thm: one_step_improvement}]
            As above, we let $\Lambda := \min_{x \in Q} \det(D^2 u)^{1/d}$, and $x_0$ some minimizer. The goal is to prove, if $x_0$ is not an interior point, then at $x_0$ the inequality \ref{eq: ineq_optimal_point} still holds true. We let $T = \nabla u$ be the optimal transport map from $\rho$ to $\mu$, which satisfies the hypothesis of Proposition \ref{prop: behavior_map_boundary}. We divide the reasoning into several steps.
            \begin{itemize}
                \item \textbf{Step 1: Gradient at corners}: First notice that $x + \tau \nabla p = \nabla u = T(x)$ for all $x \in Q$. Since $T(x) = x$ at corners, we must have $\nabla p = 0$ on the corners. Taking the gradient of equation \ref{eq: Monge_Ampere_J} we get
                \begin{equation*}
                    \begin{cases}
                        (m-1) J^{m-2} \nabla J = \frac{\nabla p}{q(\nabla u)} - J D^2 u \nabla q(\nabla u) & m \neq 1 \\
                        J^{-1} \nabla J = \nabla p - D^2 u \nabla q(\nabla u) & m = 1
                    \end{cases}
                \end{equation*}
                Applying this at corner point we get, for $c_m = m-1$ for $m \neq 1$ and $1$ else $c_m J^{m-3} \nabla J = -D^2 u \nabla q$. Since, by assumption, we have $\nabla q = 0$ at corners, we deduce that $\nabla J(x) = 0$. 

                \item \textbf{Step 2: Gradient at interior face points}: The function $h(\nabla u)$ is maximized at points of $\partial Q$. Furthermore, since $\nabla u(x) \in \partial Q \setminus \cl{C}$ for any $x \in \partial Q \setminus \cl{C}$, $h(\nabla u)$ is $C^1$ near any point in the interior of a face. By Lemma \ref{lemma: boundary_optimality}, there must be a function $x \to \lambda(x) \geq 0$ such that 
                \begin{equation*}
                    \nabla [h(\nabla u)] = D^2 u \nabla \cdot h(\nabla u) = D^2 u \cdot n = \lambda(x) \cdot n(x)
                \end{equation*}
                at any $x \in \partial Q \setminus \cl{C}$. Furthermore, since $x \cdot n = 0$ for any such $x$, we also have $T(x) \cdot n = 0$. Since $\nabla p(x) = T(x) - x$ we deduce that $\nabla p(x) \cdot n = 0$. Putting this into the Monge-Amp\`ere equation we obtain on $\partial Q \setminus \cl{C}$
                \begin{align*}
                    c_m J^{m-3} \nabla J(x) \cdot n(x) &= -D^2 u \nabla q(\nabla u(x)) \cdot n(x) = -\nabla q(\nabla u(x)) \cdot (D^2 u(x) \cdot n(x)) \\
                    &= -\lambda(x) \nabla q(\nabla u(x)) \cdot n(x) = \lambda(x) \nabla q(\nabla u(x)) \cdot n(\nabla u(x)) = 0
                \end{align*}
                by assumption, were we use that $\nabla u(x)$ is on the same face as $x$. Hence we obtain $\nabla J \cdot n = 0$ on $\partial Q \setminus \cl{C}$. 

                \item \textbf{Step 3: The case of an interior face minimum point}: Consider $d=2$ and assume that $x_0 \in \partial Q \setminus \cl{C}$. By Lemma \ref{lemma: boundary_optimality}, there exists $\lambda \leq 0$ such that $\nabla J(x_0) = \lambda n(x_0)$. But by the previous discussion, we have $\nabla J(x_0) \cdot n(x_0) = 0$. Hence $\lambda = 0$ and we deduce that $\nabla J(x_0) = 0$. Using again Lemma \ref{lemma: boundary_optimality}, we obtain $D^2 J(x_0)[v,v] \geq 0$ for all $v \in N_{x_0} Q$. This is then also true for all $v \in -N_{x_0} Q$ (as $D^2 J(x_0)[-v,-v] = D^2 J(x_0)[v,v]$). As $N_{x_0} Q \cup (-N_{x_0} Q) = \bb{R}^d$, we deduce that $D^2 J(x_0) \succeq 0$ and $\nabla J(x_0) = 0$. Therefore we can deduce the inequality \ref{eq: ineq_optimal_point} as in the interior point case and we conclude. 

                \item \textbf{Step 4: The case of a corner minimum point}: We notice that in order to derive the inequality \ref{eq: ineq_optimal_point} we only need $\nabla J(x_0) = 0$ and $\Tr [D^2 u(x_0)]^{-1} D^2 J \geq 0$. The first point being proved in Step 1, we shall prove the second. By Lemma \ref{lemma: boundary_optimality}, for all $v \in N_{x_0} Q$ we get $D^2 J(x_0)[v,v] \geq 0$. If $d=1$, this implies that $D^2 J = J'' \geq 0$ and the reasoning is done. If $d=2$, we can, up to rotation, assume that $x = (0,0)$. Notice that $\nabla u(t,0) = (s(t),0)$ for some $s(t)$. In particular, $\partial_2 u(t,0) = 0$, hence $\partial_{12} u(t,0) = 0$. Letting $t \to 0$ gives $\partial_{12} u(0,0) = 0$. Therefore $D^2 u$ is diagonal in the canonical basis. But since $e_1,e_2 \in N_x Q$ we have 
                \begin{equation*}
                    \Tr [D^2 u(x_0)]^{-1} D^2 J(x_0) = \frac{J_{11}(x_0)}{u_{11}(x_0)} + \frac{J_{22}(x_0)}{u_{22}(x_0)} \geq 0
                \end{equation*}
                which concludes the proof. \qedhere
            \end{itemize}
        \end{proof}

\section{Proof of the main Theorem} \label{section: iteration}

    We let $\Omega$ being either the torus, a cube, a quarter-space a half-space or the whole space in dimension $1$ or $2$ (which are all volume-regular). We assume that $m > m_c^1 = 1 - \frac{2}{d}$ if $\Omega$ is bounded, and $m > m_c^2 = $ if $\Omega$ is unbounded. 

We introduce the function
\begin{equation*}
    F_{d,m}[X] := \frac{1}{(1-X)^{d(m-1)+2}} - \frac{1}{(1-X)^{d(m-1)+1}} = \frac{X}{(1-X)^{d(m-1)+2}}
\end{equation*}
We shall note that this function has the following properties:
\begin{itemize}
    \item $F_{d,m}$ is increasing, and define a bijection from $[0,1)$ to $[0,+\infty)$.
    \item $F_{d,m}[X] \geq X$ for all $X \in [0,1)$. 
\end{itemize}
In particular, we can define a sequence $(X_k)_{k \geq 1}$ of $[0,1]$ by $X_1 = 1$, and $X_{k+1} = F_{d,m}^{-1}[X_k]$. 

We can now prove the main theorem of the paper: the asymptotic Aronson-Bénilan estimate in the JKO scheme.

\begin{theorem}[Asymptotic Aronson-Bénilan estimate] \label{thm: main_result}
    Let $\rho_0 \in \cl{P}_2(\Omega)$, consider the iteration of the JKO scheme starting from $\rho_0$: $\rho_{k+1}^\tau = Q_m^\tau[\rho_k^\tau]$. We let $(\rho_t^\tau)_{t \geq 0}$ be the piecewise constant interpolation of the values of the $(\rho_k^\tau)_{k \geq 0}$, i.e. $\rho_t^\tau = \rho_k^\tau$ on $[k \tau, (k+1) \tau)$.
    \begin{enumerate}
        \item The function $u_k^\tau := \tau f_m'(\rho_k^\tau) + \frac{1}{2} |x|^2$ is convex for all $k \geq 1$. 
        \item For all $k \geq 1$ we have the inequality, in the Monge-Amp\`ere sense, for the sequence $(X_k)_{k \geq 1}$ defined above:
        \begin{equation}
            \det(D^2 u_k)^\frac{1}{d} \geq 1 - X_k
        \end{equation}
        and we have $X_k \sim \frac{1}{(d(m-1)+2) k}$ as $k \to +\infty$.
        \item The following asymptotic Aronson-Bénilan estimate holds: for all $t_0 > 0$, and $\epsilon > 0$ there exists $\delta$ such that for all $\tau < \delta$, $t \geq t_0$ we have:
        \begin{equation}
            \Delta f'_m(\rho_t^\tau) \geq -(1+\epsilon) \frac{d}{d(m-1) + 2} \cdot \frac{1}{t}
        \end{equation}
    \end{enumerate}
\end{theorem}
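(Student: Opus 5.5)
The plan is to first prove the three statements for regular data on a cube or the torus by iterating Theorem \ref{thm: one_step_improvement}, and then to remove the regularity and domain restrictions by approximation.

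\textbf{Step 1 (regular data, cube or torus).} Take $\mu=\rho_0$ strictly positive, $C^2$, satisfying the Neumann/corner compatibility conditions of Theorem \ref{thm: one_step_improvement}. I claim that for all $k\ge1$ the iterates keep this regularity (bootstrap from Theorem \ref{thm: Caff_regu}), that they satisfy the same boundary conditions, and that $u_k$ is convex.

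For the boundary conditions: on faces, $\nabla p_k\cdot n=0$ follows from $T(F)=F$ in Proposition \ref{prop: behavior_map_boundary}. At corners, $\nabla p_k=0$ follows from $T=\mathrm{id}$ there.

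For convexity: $u_1$ is a Brenier potential, so $u_1$ is convex with $\det(D^2u_1)^{1/d}\ge 0=1-X_1$. Convexity of $u_k$ for $k\ge1$ then propagates by the same argument.

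\textbf{Step 2 (the recursion).} Suppose inductively that $\det(D^2u_k)^{1/d}\ge\lambda:=1-X_k$. Theorem \ref{thm: one_step_improvement} gives one of two alternatives.

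In the first alternative the bound is $\ge1\ge1-X_{k+1}$, and we are done.

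In the second alternative we get $\Lambda$ with $1+\Lambda^{-(d(m-1)+1)}-\Lambda^{-dm}\ge 1-X_k$. Write $\Lambda=1-Y$. The inequality becomes
\[
\frac{1}{(1-Y)^{d(m-1)+2}}-\frac{1}{(1-Y)^{d(m-1)+1}}\ge X_k ,
\]
up to the exponent bookkeeping (I will check $dm=d(m-1)+d$ matches the definition of $F_{d,m}$ for $d\in\{1,2\}$; if $d=1$ a harmless extra factor appears, handled by monotonicity). This reads $F_{d,m}[Y]\ge X_k$. By monotonicity of $F_{d,m}$, $Y\le X_{k+1}$, so $\Lambda\ge 1-X_{k+1}$.

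The asymptotics come from writing $X_k=F(X_{k+1})=X_{k+1}\bigl(1+aX_{k+1}+O(X_{k+1}^2)\bigr)$ with $a=d(m-1)+2>0$ (this uses $m>m_c^1$). Then $1/X_{k+1}-1/X_k\to a$, and Cesàro gives $X_k\sim 1/(ak)$.

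\textbf{Step 3 (approximation).} For general $\rho_0\in\mathcal P_2(\Omega)$ on the cube or torus, approximate $\rho_0$ in $\mathbb W_2$ by regular compatible data; for instance, mollify by the heat semigroup with Neumann reflection. By Proposition \ref{prop: stability_JKO} and uniqueness, each iterate converges in $\mathbb W_2$. Proposition \ref{prop: convergence_potentials}, or its torus/cube analogue obtained from the local $L^\infty$ bound of Lemma \ref{lemma: L1-Linfty_regularization} together with convexity, gives locally uniform convergence of $u_k$. Proposition \ref{prop: stability_Monge_Ampere} then passes the inequality $\mu_{u_k}\ge(1-X_k)^d\mathcal L^d$ to the limit, and convexity is preserved as well.

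For unbounded domains (half-space, quarter-space, whole space), exhaust $\Omega$ by cubes $\Omega_N$ (suitably placed so that the cube corners and faces align with the boundary of $\Omega$). Apply the cube result and pass to the limit again using Propositions \ref{prop: stability_JKO}, \ref{prop: convergence_potentials} and \ref{prop: stability_Monge_Ampere}. This requires volume-regularity and $m>m_c^2$.

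\textbf{Step 4 (part (3)).} Lemma \ref{lemma: AM_GM_Monge_Ampere} gives $\Delta u_k\ge d(1-X_k)$, that is, $\tau\Delta f_m'(\rho_k)\ge -dX_k$. For $t\in[k\tau,(k+1)\tau)$ with $t\ge t_0$, we have $k\ge t_0/\tau-1\to\infty$ as $\tau\to0$. Hence $X_k\le(1+\epsilon)/(ak)$ once $k$ is large, and $\tau k\ge t-\tau\ge t(1-\tau/t_0)$. Absorbing the last factor into $\epsilon$ gives the claimed bound.

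\textbf{Main obstacle.} The delicate step is the approximation in Step 3. We must produce approximating data that are simultaneously $C^2$, strictly positive and satisfy the corner/face conditions, and we must ensure that the regularity and compatibility propagate along the iterates so that Theorem \ref{thm: one_step_improvement} can be reapplied at each step. The propagation of $\nabla q=0$ at corners needs Proposition \ref{prop: behavior_map_boundary} plus the $C^{3,\beta}$ regularity, which is sharp on the cube. The remaining steps are routine.
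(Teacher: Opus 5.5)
Your overall route is the paper's. You work with regular positive data on the cube or torus and iterate Theorem~\ref{thm: one_step_improvement} from $\rho_1^\tau$, with base case $\det(D^2u_1)^{1/d}\ge 0=1-X_1$ and boundary conditions for $k\ge1$ from Proposition~\ref{prop: behavior_map_boundary}. You then approximate via Propositions~\ref{prop: stability_JKO}, \ref{prop: convergence_potentials} and \ref{prop: stability_Monge_Ampere}, and finish with Lemma~\ref{lemma: AM_GM_Monge_Ampere}.

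\textbf{The induction step in your Step 2 is wrong as written.} Set $\alpha:=d(m-1)+2$. Subtracting $1$ from $1+\Lambda^{-(\alpha-1)}-\Lambda^{-\alpha}\ge 1-X_k$ gives $\Lambda^{-\alpha}-\Lambda^{-(\alpha-1)}\le X_k$, that is, $F_{d,m}[Y]\le X_k=F_{d,m}[X_{k+1}]$. It is this upper bound, combined with monotonicity of $F_{d,m}$ on $[0,1)$, that yields $Y\le X_{k+1}$. (If $\Lambda\ge1$ you are in the first alternative, so here $\Lambda\in(0,1)$ and $Y\in(0,1)$.) You wrote $F_{d,m}[Y]\ge X_k$, from which monotonicity would give $Y\ge X_{k+1}$, the opposite of what you need. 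Your two sign slips cancel, but the step as written does not prove the claim.

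\textbf{The $d=1$ exponent is not ``a harmless extra factor handled by monotonicity''.} Read literally, $\Lambda^{-dm}=\Lambda^{-m}=\Lambda^{-(d(m-1)+1)}$, so alternative (2) collapses to $1\ge\lambda$ and carries no information. Go back to Step 3 of the proof of Theorem~\ref{thm: one_step_improvement}. There, dividing $1-\Lambda^{-1}+\Lambda^{m}\ge\Lambda^{m}\lambda$ by $\Lambda^{m}$ gives the exponent $m+1=d(m-1)+2$. The correct exponent is $d(m-1)+2$ in both dimensions; the $dm$ in the statement is only right for $d=2$. This is exactly why $F_{d,m}$ is the right recursion.

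\textbf{Two smaller points.}
\begin{enumerate}
\item Before linearizing $F_{d,m}$ you need $X_k\to0$. Since $F_{d,m}[X]\ge X$, the sequence is nonincreasing. Its limit lies in $[0,X_2]\subset[0,1)$ and is a fixed point of $F_{d,m}$, and $0$ is the only one.
\item The compatibility conditions you impose on $\rho_0$ are unnecessary. Theorem~\ref{thm: one_step_improvement} is only applied with $\mu=\rho_k^\tau$, $k\ge1$, and for these you already derive the conditions from the transport map. So the ``main obstacle'' you single out is not one: any $C^2$ strictly positive approximation of $\rho_0$ suffices, as in the paper.
\end{enumerate}
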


\begin{proof}

    We divide into several steps.

    \begin{itemize}
    
        \item \textbf{Bounded domain, regular initial data}: We first prove points $(1)$ and $(2)$ under strong regularity assumption on the initial data, on the torus and on the cube. More precisely, we assume that $\rho_0$ is of class $C^2(\Omega)$ and positive. We check that $\rho_k^\tau$ satisfies the hypothesis of Theorem \ref{thm: one_step_improvement}:
        \begin{enumerate}
            \item By propagation of lower and upper bound, there exists some $\epsilon > 0$ such that $\epsilon \leq \rho_k^\tau \leq \epsilon^{-1}$ for all $k \geq 0$.
            \item By the optimality conditions, $u_k^\tau = \tau f_m'(\rho_k^\tau) + \frac{1}{2} |x|^2$ is convex for all $k \geq 1$. 
            \item Therefore $f_m'(\rho_k^\tau)$ is Lipschitz, which combined with the upper and lower bounds, implies that $\rho_k^\tau$ is itself Lipschitz.
            \item If $\rho_k^\tau$ is of class $C^2(\Omega)$, then as $\rho_{k+1}^\tau$ is Lipschitz, Caffarelli's regularity implies that the Kantorovitch potentials are $C^2(\Omega)$, which in turn implies that $\rho_{k+1}^\tau$ is $C^2(\Omega)$ by optimality conditions. Since $\rho_0$ is $C^2(\Omega)$, we can propagate this regularity: $\rho_k^\tau$ is $C^2(\Omega)$ for all $k \geq 0$. 
            \item Finally, if $\Omega$ is the cube, then by optimality conditions, we have for $k \geq 1$, $\tau \nabla f_m'(\rho_k^\tau) = T_k - \rm{id}$ where $T_k$ is the optimal transport map from $\rho_k^\tau$ to $\rho_{k-1}^\tau$. Then if $x$ is a corner, we obtain $\nabla f_m'(\rho_k^\tau)(x) = 0$ since $T_k(x) = x$, and if $x$ is in the interior of a face $F$, then $T_k(x)$ is in the interior of the same face, and we have $\nabla f_m'(\rho_k^\tau)(x) \cdot n(x) = T_k(x) \cdot n_F - x \cdot n_F = 0$ as $n \perp (F-F)$. 
        \end{enumerate}
        
        Therefore we can iterate the One-Step improvement of Monge-Amp\`ere lower bound starting from $\rho_1^\tau$.
        
        Let $\Lambda_k$ is the infimum of $\det(D^2 u_k^\tau)^{1/d}$, then either $\Lambda_{k+1} \geq 1$, or $F_{d,m}[1-\Lambda_{k+1}] \leq 1 - \Lambda_k$. We argue that $\Lambda_k \geq 1 - X_k$ for all $k \geq 0$. Indeed, this is true for $k = 1$ as $\Lambda_1 \geq 0$ by convexity of $u_1^\tau$. Suppose that this is true for some $k \geq 1$, then using the One-Step improvement \ref{thm: one_step_improvement}, we either have $\Lambda_{k+1} \geq 1$, which gives $\Lambda_{k+1} \geq 1 - X_{k+1}$ as $X_{k+1} \geq 0$, or, by algebraic manipulation of the inequality appearing in the Theorem, $F_{d,m}[1-\Lambda_{k+1}] \leq 1-\Lambda_k \leq X_k = F_{d,m}[X_{k+1}]$, which yields $1-\Lambda_{k+1} \leq X_{k+1}$ as $F_{d,m}$ is increasing, and we conclude by induction. 
        
        \item \textbf{General case} We now approximate the domain, if unbounded, by an increasing sequence of cubes $(\Omega_N)_{N \geq 0}$, and the initial datum by $C^2(\Omega_N)$ positive initial datum. Iterating Proposition \ref{prop: stability_JKO}, the iterates are converging in $\bb{W}_2(\Omega_N)$, and by Proposition \ref{prop: convergence_potentials}, the convex functions $(u_{k,N}^\tau)_{N \geq 0}$ are converging locally uniformly to $u_k^\tau$. Using the stability of the Monge-Amp\`ere measure, this concludes the general case. 

        \item \textbf{Asymptotic for $(X_k)_{k \geq 0}$} By $F_{d,m}[X] \geq X$, we deduce that the sequence $(X_k)_{k \geq 0}$ is decreasing, hence converging. And one easily see that the only fixed point of $F_{d,m}$ is $0$. To obtain the asymptotic, we observe that one can linearize $F_{d,m}$ around $0$ as $F_{d,m}[X] = X(1 + \alpha X + o(X))$, for $\alpha = d(m-1) + 2$. Therefore we have
        \begin{equation*}
            X_k = X_{k+1}(1 + \alpha X_{k+1} + o(X_{k+1}))
        \end{equation*}
        as $k \to +\infty$. Hence
        \begin{align*}
            \frac{1}{X_k} = \frac{1}{X_{k+1}} \cdot \frac{1}{1 + \alpha X_{k+1} + o(X_{k+1})} = \frac{1}{X_{k+1}} - \alpha + o(1)
        \end{align*}
        we deduce that $\frac{1}{X_{k+1}} - \frac{1}{X_k} \to \alpha$ as $k \to +\infty$. Applying Ces\`aro lemma, we deduce that $\frac{1}{k X_{k+1}} \to \alpha$, i.e. $X_{k+1} \sim \frac{1}{\alpha k}$.

        \item \textbf{Asymptotic Aronson-Bénilan} Using the AM-GM inequality for Monge-Amp\`ere measure \ref{lemma: AM_GM_Monge_Ampere}. We have $\Delta f'_m(\rho_k^\tau) \geq -\frac{d X_k}{\tau}$. Fix $\epsilon > 0$, for all $k \geq k_0$ large enough, we have $X_k \leq \frac{1+\epsilon}{\alpha (k+1)}$, hence if $t_0 \geq (k_0 + 1) \tau$, i.e. $\tau \leq \frac{k_0+1}{t_0}$, then for $t \geq t_0$ we deduce that 
         \begin{equation*}
             \Delta f'_m(\rho_t^\tau) \geq -d \frac{1+\epsilon}{\alpha \tau (k+1)} \geq -(1+\epsilon) \frac{d}{d(m-1) + 2} \cdot \frac{1}{t} 
         \end{equation*} \qedhere
    \end{itemize} 
\end{proof}

\bibliography{reference}{}
\bibliographystyle{plain}

\end{document}